\documentclass[11pt,oneside]{amsart}

\usepackage{amsmath,amssymb,amsthm}
\usepackage{mathrsfs}
\usepackage[english]{babel}
\usepackage[pagewise]{lineno}
\usepackage{geometry}
\usepackage[cal=boondoxo]{mathalfa}
\usepackage[colorlinks=true,urlcolor=blue,citecolor=red,linkcolor=blue]{hyperref}
\usepackage[hyperpageref]{backref}
\usepackage{fancyhdr}  % Pacote para personalizar cabeçalhos e rodapés

\usepackage[numbers, sort&compress]{natbib}
\numberwithin{equation}{section}
\newtheorem{theorem}{Theorem}[section]
\newtheorem{lemma}{Lemma}[section]

\newtheorem{remark}{Remark}[section]

\newcommand{\R}{\mathbb{R}}

\title{A Trudinger--Moser inequality under a refined constraint in fractional dimensions and extremal functions}
\author{Ruan Diego da Silva Paiva}
\author{José Francisco de Oliveira}\thanks{Second author was partially supported by  National Council for Scientific and Technological Development (CNPq) \# 309491/2021-5 and 303443/2025-1}
 \address[R.D. da Silva Paiva]{\newline\indent Department of Mathematics
	\newline\indent 
	State University of Piau\'{i}
	\newline\indent
	64600-000  Picos, PI, Brazil}
	\email{\href{mailto:ruanpaiva@pcs.uespi.br}{ruanpaiva@pcs.uespi.br}}
	
    \address[J.F.\ de Oliveira]{\newline\indent Department of Mathematics
	\newline\indent 
	Federal University of Piau\'{i}
	\newline\indent
	64049-550 Teresina, PI, Brazil}
	\email{\href{mailto:jfoliveira@ufpi.edu.br}{jfoliveira@ufpi.edu.br}}	
	\subjclass[2010]{Primary 46E35, Secondary 35B44, 26D10, 35B33}
	\keywords{Trudinger-Moser inequality, weighted Sobolev spaces,  blow-up analysis, critical growth}
\begin{document}
\maketitle
\begin{abstract}
We establish a Trudinger–Moser type inequality with a Tintarev-type constraint in fractional-dimensional spaces and prove the existence of maximizers in the critical regime. Our results provide a refinement of those in (\textit{Calc. Var.} \textbf{52} (2015), 125–163) in the setting of fractional-dimensional spaces, as well as of those in (\textit{Ann. Global Anal. Geom.} \textbf{54} (2018), 237–256) for classical Sobolev spaces.
  \end{abstract}
 %  \tableofcontents
\section{Introduction}     
Let $\Omega$ be a smooth bounded domain in $\R^n\, (n \geq 2)$. Let $W^{1,n}_0(\Omega)$ be the limit case Sobolev space defined as the closure of $C_0^{\infty}(\Omega)$ with the Dirichlet norm   $\|u\|_D=(\int_\Omega |\nabla u|^n\mathrm{d} x)^{1/n}$. The classical Trudinger-Moser inequality asserts that
\begin{equation}\label{PTM1}
\sup_{u \in W^{1,n}_0(\Omega), \, \|\nabla u\|_{n}=
1}\int_{\Omega} e^{\mu |u|^{\frac{n}{n-1}}}\ \mathrm{d} x
 \quad
\left\{
  \begin{array}{lll}
    \leq C_n
|\Omega| & \mbox{if} & \mu\le\mu_{n}\\
    =\infty & \mbox{if} & \mu>\mu_{n},
  \end{array}
\right.
\end{equation}
where $\mu_n=n \omega_{n-1}^{1/(n-1)}$,  $|\Omega|$ denotes
the Lebesgue measure of a set $\Omega$ in $\mathbb{R}^n$ and
$\omega_{n-1}$ is the measure of the unit sphere in
$\mathbb{R}^n$. 
This estimate, established by J. Moser \cite{moser1971}, refines earlier contributions by Trudinger \cite{Trudinger67}, Yudovich \cite{Yudo}  and Pohozaev
\cite{Poho}, and has a vast number of applications and extensions in several settings, see for instance  \cite{Adams,AdDr2004,Tintarev2014,LamLu2,Tian} for some extensions  and \cite{Baird,Chang88,DjairoCPAM} for applications in geometric analysis and partial differential equations.

In the critical case $\mu=\mu_{n}$, the existence of extremal functions for \eqref{PTM1}  is a delicate problem, which was positively solved in a series of papers  \cite{carleson1986,Struwecritical,flucher1992,lin1996}.

Motivated by the refinements of \eqref{PTM1} established in \cite{Tintarev2014,Nguyen2017}, which also extend improvements in \cite{AdDr2004,Yang20006}, and by recent advances concerning Trudinger--Moser type estimates in fractional-dimensional settings \cite{JF2014,JFcasocritico,CV,DoLuPonciano2024a,DoLuPonciano2025,DOMADE,Ponciano,CCM02017,RMI}, in this paper we establish generalizations of \eqref{PTM1} to fractional-dimensional spaces and investigate the corresponding extremal problem.

To state our results precisely, let us briefly introduce the concept of fractional integrals and related weighted Sobolev spaces in which we will be working. From the formalism developed in \cite{Still77,Zubair12}, the integration of radially symmetric function $f(r)$ on a $\theta$-dimensional fractional space is given by
\begin{equation}\label{fractional integral}
    \int f(r(x_0,x_1)) \mathrm{d} x_0=\omega_{\theta}\int_{0}^{\infty}r^{\theta}f(r) \mathrm{d} r,
\end{equation}
where $r(x_0,x_1)$ is the distance between two points $x_0$ and $x_1$, and $\omega_{\theta}$ is defined  by
\begin{equation}\label{fractional volume}
\omega_{\theta}=\frac{2\pi^{\frac{\theta}{2}}}{\Gamma(\frac{\theta}{2})},\;\;\;\mbox{with}\;\;\;\Gamma(x)=\int_0^{\infty} t^{x-1} \operatorname{e}^{-t} \,
\mathrm{d} t,\;\; x>0.
\end{equation}
It is worth noting that integration over fractional dimensional spaces is often used in the dimensional regularization method as a powerful tool to obtain results in statistical mechanics and quantum field theory \cite{Collins, Palmer, Zubair11,Zubair12}. By simplicity we denote the corresponding $\theta$-fractional measure (cf. \eqref{fractional integral}) by
\begin{equation}\label{fractional-measure}
\begin{aligned}
\int_{0}^{R}f(r)\mathrm{d}\lambda_{\theta}=\omega_{\theta}\int_{0}^{R}f(r) r^{\theta}\mathrm{d}r,\quad \theta\ge0 \;\; \mbox{and}\;\; 0<R\le \infty.
\end{aligned}
\end{equation} 
On the other hand, based on Hardy-type inequalities in \cite{Opic}, Cl\'{e}ment-de Figueiredo-Mitidieri \cite{Clement-deFigueiredo-Mitidieri} proposed a class of weighted Sobolev spaces which are connected with fractional integrals and play important role in the study of a class of quasilinear elliptic equations including the $p$-Laplacian and $k$-Hessian operators in the radial form, see \cite{JF2014, JFcasocritico, CV, K-hessianattain}. Precisely, for  $0<R\le\infty$, $\theta\ge0$ and $q\ge 1$, set $L^q_{\theta}=L^q_{\theta}(0,R)$ the Lebesgue space associated with the $\theta$-fractional measure \eqref{fractional-measure} on the interval $(0,R)$ and let $AC_{loc}(0,R]$ be the set of of all locally absolutely continuous functions on the interval $(0,R]$.  Then, we denote by $X^{1,p}_R(\alpha,\theta)$ the weighted Sobolev spaces given by the completion of the set of all  functions  $u\in AC_{loc}(0,R)$ such that  $\lim_{r\rightarrow R}u(r)=0$, $u\in L^{p}_{\theta}$ and $u^{\prime}\in L^{p}_{\alpha}$ with the norm
\begin{equation}\label{Xnorm-full}
\|u\|=(\|u\|^{p}_{L^{p}_{\theta}}+\|u^{\prime}\|^{p}_{L^{p}_{\alpha}})^{\frac{1}{p}}.
\end{equation}
We recall that  \eqref{Xnorm-full}  is equivalent to  the gradient norm $\|u^{\prime}\|_{L^{p}_{\alpha}}$ under the condition
\begin{equation}\label{cond-equivNorm}
\theta\ge \alpha-p \;\;\mbox{and}\;\;  0<R<\infty,
\end{equation}
see for instance \cite{JFcasocritico}.  In fact, the behavior of functions in $X^{1,p}_R(\alpha,\theta)$ depends on the parameters $\alpha$, $p$, and $\theta$, leading to three regimes: the Sobolev case for $\alpha-p+1>0$, the Trudinger–Moser case for $\alpha-p+1=0$, and the Morrey case for $\alpha-p+1<0$, see for instance \cite{DOMADE,Ponciano}.  If  $\alpha-p+1>0$ and $0<R<\infty$, one has the continuous embedding 
\begin{equation}\label{imersão-Sobolev}
	X^{1,p}_R(\alpha,\theta)\hookrightarrow L^q_{\theta}\quad\mbox{for all}\quad 1<q\le p^*\;\;\mbox{and}\;\; \theta\ge \alpha-p
\end{equation}
where the critical exponent $p^*$ is given by
\begin{equation}
	 p^*=\frac{(\theta+1)p}{\alpha-p+1}.
\end{equation}
The embedding \eqref{imersão-Sobolev}  is compact in the strict case $\theta>\alpha-p$ and $q<p^*$. In this work, we are interested in the Trudinger–Moser case on bounded domains. Therefore, from now on, unless otherwise stated,  we shall assume
\begin{equation}\label{TM-case}
  \alpha-p+1=0\;\;\mbox{and}\;\; 0<R<\infty.
\end{equation}
Note that, under such conditions,  \eqref{Xnorm-full} is also equivalent to the gradient norm $\|u^{\prime}\|_{L^{p}_{\alpha}}$, since $\theta\ge \alpha-p=-1$. In addition, we have the  compact embedding 
\begin{equation}\label{imersão}
 X^{1,p}_R(\alpha,\theta)\hookrightarrow L^q_{\theta}\quad\mbox{for all}\quad q\in (1,\infty).
\end{equation}
The threshold growth for the embedding \eqref{imersão} is not attained for any Lebesgue space $L^q_{\theta}$ but it is given by the Orlicz space determinated by  the exponential function, as demonstrated in \cite{JF2014,CCM02017}. In fact, in  \cite{JF2014} is proved that $\exp(\mu|u|^{{\frac{p}{p-1}}})\in L^1_{\theta}$, for any $u\in X^{1,p}_R(\alpha,\theta)$ and $\mu>0$.  In addition, there exists $c< \infty$, depending on $\alpha, \theta, p$ and $R$ such that
\begin{equation}\label{TM1}
\sup_{u \in X^{1,p}_R(\alpha,\theta), \; \|u^{\prime}\|_{L^p_{\alpha}}\le 1} \int_0^R e^{\mu |u|^\frac{p}{p-1}}\,\mathrm{d}\lambda_{\theta}\; \begin{cases}
 \leq c, &\mbox{if }
 \;\mu \leq \mu_{\alpha,\theta}\\
 =\infty, &\mbox{if }\; \mu>\mu_{\alpha, \theta}
\end{cases}
\end{equation}
with $\mu_{\alpha,\theta}=(\theta+1)\omega_\alpha^\frac{1}{\alpha}$. Further, \eqref{TM1} admits an extremal function.
In \cite{JFcasocritico}, the authors established an extension of Adimurthi-Druet type \cite{AdDr2004,Yang2006} for \eqref{TM1}. More precisely,
\begin{equation}\label{PTM5}
  \sup_{u \in X^{1,p}_R(\alpha,\theta), \; \|u^{\prime}\|_{L^p_{\alpha}}\le 1} \int_0^R e^{\mu_{\alpha,\theta}(1+\nu\|u\|_{L^{p}_\theta}^{p})^{\frac{1}{p-1}} |u|^\frac{p}{p-1}}\,\mathrm{d}\lambda_{\theta}<\infty
\end{equation}
for any $\nu>0$ such that
\begin{equation*}
		0\leq \nu <\lambda_{\alpha,\theta}=\inf_{X^{1,p}_R(\alpha,\theta) \backslash \{0\}}\frac{\|u^\prime\|^{p}_{L^{p}_\alpha}}{\|u\|^{p}_{L^{p}_\theta}}.
	\end{equation*}
Furthermore, the supremum in \eqref{PTM5} is attained for some $v \in X^{1,p}_R(\alpha,\theta)$. For further extensions of \eqref{TM1}, we refer to \cite{CV,RMI,Ponciano,DoLuPonciano2024a,DoLuPonciano2025,DOMADE,CCM02017} and the references therein. 

Our aim here is to establish an improvement of the Trudinger–Moser inequality  \eqref{PTM5} and provide the fractional-dimensional counterpart of \cite{Tintarev2014,Nguyen2017}. First, we fix the notation
\begin{equation}\label{H}
    H_{\nu}(u)=(\|u^\prime\|^{p}_{L^{p}_\alpha}-\nu\|u\|^{p}_{L^{p}_\theta})^{\frac{1}{p}}, \;\; \mbox{with }\;\; 0\leq\nu<\lambda_{\alpha,\theta}.
\end{equation}
Our first result reads as follows:
\begin{theorem}\label{teoprinc1}
    Let $p \geq 2$, $\alpha$  and $R$ satisfy assumption \eqref{TM-case}, and let $\theta\ge \alpha$. Then 
    \begin{equation}\label{CT}
		S(p,\nu,\theta,R) = \sup_{u \in X^{1,p}_R(\alpha,\theta), \; H_{\nu}(u) \leq 1} \int_0^R e^{\mu_{\alpha,\theta}|u|^{\frac{p}{p-1}}} \mathrm{d}\lambda_{\theta}<\infty
	\end{equation}
    for any $0\leq \nu<\lambda_{\alpha,\theta}$.
\end{theorem}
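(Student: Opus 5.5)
We would prove Theorem \ref{teoprinc1} by blow-up analysis, in the spirit of Carleson--Chang, Yang, and the Adimurthi--Druet extension \eqref{PTM5} from \cite{JFcasocritico}. A direct reduction to \eqref{PTM5} by rescaling does not seem possible. The constraint $H_\nu(u)\le1$ only gives $\|u'\|^p_{L^p_\alpha}\le 1+\nu\|u\|^p_{L^p_\theta}$. After normalizing $v=u/\|u'\|_{L^p_\alpha}$, the resulting exponent $\big(1+\nu\|u'\|^p_{L^p_\alpha}\|v\|^p_{L^p_\theta}\big)^{1/(p-1)}$ carries the extra factor $\|u'\|^p_{L^p_\alpha}>1$, so \eqref{PTM5} does not apply verbatim.

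\emph{Subcritical maximizers.} For $\mu_\varepsilon=\mu_{\alpha,\theta}-\varepsilon$ we first show that
\[
\sup_{H_\nu(u)\le1}\int_0^R e^{\mu_\varepsilon|u|^{p/(p-1)}}\,\mathrm{d}\lambda_\theta
\]
is attained by some $u_\varepsilon$. Since $\nu<\lambda_{\alpha,\theta}$, the constraint controls $\|u'\|_{L^p_\alpha}$, so bounded sets in $H_\nu$ are bounded in $X^{1,p}_R(\alpha,\theta)$. The compact embedding \eqref{imersão}, together with the subcritical integrability from \eqref{TM1} and Vitali's theorem, then gives existence. We may take $u_\varepsilon\ge0$ and nonincreasing; here the hypothesis $\theta\ge\alpha$ is used to justify the one-dimensional weighted rearrangement (a P\'olya--Szeg\H{o} step in $X^{1,p}_R$). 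The maximizer satisfies $H_\nu(u_\varepsilon)=1$ and the Euler--Lagrange ODE
\[
-(r^\alpha|u_\varepsilon'|^{p-2}u_\varepsilon')'=r^\theta\Big(\nu u_\varepsilon^{p-1}+\tfrac{1}{\beta_\varepsilon}u_\varepsilon^{1/(p-1)}e^{\mu_\varepsilon u_\varepsilon^{p/(p-1)}}\Big),
\]
with Lagrange multiplier $\beta_\varepsilon$, and $\liminf\beta_\varepsilon>0$. We also check that the subcritical suprema converge to $S(p,\nu,\theta,R)$ as $\varepsilon\to0$.

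\emph{Dichotomy.} Let $c_\varepsilon=u_\varepsilon(0)=\max u_\varepsilon$. If $c_\varepsilon$ stays bounded, dominated convergence gives finiteness, and in fact attainment. So we assume $c_\varepsilon\to\infty$ and derive an explicit upper bound, which proves \eqref{CT}. In that case we show:
\begin{itemize}
\item $u_\varepsilon\rightharpoonup0$, $\|u'_\varepsilon\|_{L^p_\alpha}\to1$, and $|u'_\varepsilon|^p r^\alpha\,\mathrm{d}r\rightharpoonup\delta_0$. This uses concentration--compactness: if the weak limit $u_0$ were nonzero, the constraint would give an improved exponent, contradicting blow-up.
\item With $r_\varepsilon^{\theta+1}=\beta_\varepsilon c_\varepsilon^{-p/(p-1)}e^{-\mu_\varepsilon c_\varepsilon^{p/(p-1)}}$, the rescaled function $c_\varepsilon^{1/(p-1)}\big(u_\varepsilon(r_\varepsilon s)-c_\varepsilon\big)$ converges to the explicit radial Liouville-type profile $-\frac{p-1}{\mu_{\alpha,\theta}}\log\big(1+c\,s^{(\theta+1)/(p-1)}\big)$.
\item $c_\varepsilon^{1/(p-1)}u_\varepsilon\to G$ in $C^1_{loc}(0,R]$, where $G$ is the Green function of $-(r^\alpha|G'|^{p-2}G')'-\nu r^\theta G^{p-1}=r^\theta\delta_0$. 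The $\nu$-term makes $G=-\frac{1}{\mu_{\alpha,\theta}}\log r^{\theta+1}+A+o(1)$ near $0$, with a constant $A$.
\end{itemize}

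\emph{Capacity estimate (main obstacle).} The key step is the upper bound
\[
\lim_{\varepsilon\to0}\int_0^R e^{\mu_\varepsilon u_\varepsilon^{p/(p-1)}}\,\mathrm{d}\lambda_\theta\le \frac{\omega_\theta R^{\theta+1}}{\theta+1}+\frac{\omega_\theta}{\theta+1}\,e^{\mu_{\alpha,\theta}A+\sum_{k=1}^{p-1}1/k}.
\]
We obtain it by comparing $u_\varepsilon$ on the neck $(Lr_\varepsilon,\delta)$ with the one-dimensional minimizer of $\int|v'|^p r^\alpha\,\mathrm{d}r$ under fixed boundary values; these minimizers are explicit, since $\alpha=p-1$ gives logarithmic profiles. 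The difficulty is that the inner energy loss must be computed while the term $\nu\|u_\varepsilon\|^p_{L^p_\theta}$ sits inside the constraint, rather than in the exponent as in \eqref{PTM5}. We would handle this by expanding $\|u'_\varepsilon\|^p$ on $(\delta,R)$ through $G$, and absorbing $\nu\int G^p\,\mathrm{d}\lambda_\theta$ via the equation for $G$ (integrate $G\cdot(\text{equation})$). Then $(a+b)^{p/(p-1)}$-type inequalities are used to compare $u_\varepsilon(Lr_\varepsilon)$ with $c_\varepsilon$, and this is where $p\ge2$ enters. The resulting bound is finite, so \eqref{CT} holds.
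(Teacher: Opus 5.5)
Your overall plan follows the paper: subcritical maximizers with $H_\nu(u_\varepsilon)=1$, the same scale $r_\varepsilon$ and limit profile, and the Green function $G=-\frac{\theta+1}{\mu_{\alpha,\theta}}\ln r+A+o(1)$. The neck comparison on $(Lr_\varepsilon,\delta)$ with the explicit logarithmic minimizer is also the paper's, including the cancellation of $\nu\|u_\varepsilon\|^p_{L^p_\theta}$ obtained by testing the equation of $G$ with $G$. This upper bound alone yields finiteness; in the paper the test functions serve only to exclude blow-up, i.e.\ for attainment. Your constant $\sum_{k=1}^{p-1}1/k$ should read $\Psi(p)+\gamma$, since $p$ need not be an integer.

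\textbf{The gap is in the subcritical step.} You obtain $u_\varepsilon$, and implicitly $S_\varepsilon<\infty$, from \eqref{TM1} and Vitali. But $H_\nu(u)\le1$ only gives $\|u'\|^p_{L^p_\alpha}\le 1+\nu\|u\|^p_{L^p_\theta}$, and this can be as large as $(1-\nu/\lambda_{\alpha,\theta})^{-1}$. For a maximizing sequence $u_j\rightharpoonup u\not\equiv0$ with $\nu>0$, one has $\mu_\varepsilon\|u_j'\|^{p/(p-1)}_{L^p_\alpha}\to\mu_\varepsilon(1+\nu\|u\|^p_{L^p_\theta})^{1/(p-1)}$. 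This exceeds $\mu_{\alpha,\theta}$ once $\varepsilon$ is small, so \eqref{TM1} gives no uniform integrability of $e^{\mu_\varepsilon|u_j|^{p/(p-1)}}$.

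The missing ingredient is the Lions-type estimate of Lemma~\ref{improvement lions}. Set $v_j=u_j/\|u_j'\|_{L^p_\alpha}\rightharpoonup v=u/(1+\nu\|u\|^p_{L^p_\theta})^{1/p}$. Then
\[
1-\|v'\|^p_{L^p_\alpha}=\frac{1-H^p_\nu(u)}{1+\nu\|u\|^p_{L^p_\theta}},
\]
so Lions' threshold $(1-\|v'\|^p_{L^p_\alpha})^{-1/(p-1)}$ exceeds $\lim_j\|u_j'\|^{p/(p-1)}_{L^p_\alpha}$ by exactly the factor $(1-H^p_\nu(u))^{-1/(p-1)}\in(1,\infty]$. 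This gives $\limsup_j\int_0^R e^{q\mu_{\alpha,\theta}|u_j|^{p/(p-1)}}\,\mathrm{d}\lambda_\theta<\infty$ for some $q>1$. Only when $u\equiv0$ does $\|u_j'\|_{L^p_\alpha}\to1$ make \eqref{TM1} sufficient. This is the ``improved exponent'' you invoke later in the blow-up dichotomy, so the repair is at hand, but it is needed already to prove $S_\varepsilon<\infty$ and the existence of $u_\varepsilon$.

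Smaller points:
\begin{itemize}
\item \emph{Blow-up branch.} Turning the improved exponent into a contradiction with $c_\varepsilon\to\infty$ needs an extra $L^\infty$ bound. This comes from the integral form of the Euler--Lagrange equation together with $\liminf\beta_\varepsilon>0$. The paper avoids this by placing $u_0\not\equiv0$ in the non-blow-up case and concluding directly with Vitali.
\item \emph{Role of $\theta\ge\alpha$.} It plays no role in a rearrangement: nonnegativity comes from replacing $u_j$ by $|u_j|$, and monotonicity from the integral representation of $u_\varepsilon'$. It is needed for $u_\varepsilon'(0)=0$ and for the compact embedding $X^{1,q}_R(\alpha,\theta)\hookrightarrow L^p_\theta$ with $p/2<q<p$. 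That embedding underlies $c_\varepsilon^{1/(p-1)}u_\varepsilon\to G$, which also requires an $L^{p-1}_\theta$ bound derived from $\nu<\lambda_{\alpha,\theta}$. It is also used in the expansion of $G$ at the origin.
\item \emph{Role of $p\ge2$.} In the paper it enters through the concavity bound $(1-t)^{1/(p-1)}\le1-\frac{t}{p-1}$ applied to the neck energy. It does not enter through the Bernoulli comparison of $u_\varepsilon(Lr_\varepsilon)$ with $c_\varepsilon$, which only uses $p/(p-1)\ge1$.
\end{itemize}
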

Note that the estimate \eqref{CT} is stronger than  \eqref{PTM5}. Indeed, let $u \in X^{1,p}_R(\alpha,\theta)$ with  $\|u^{\prime}\|^{p}_{L^{p}_\alpha} \leq 1$ and set $v=(1+\nu\|u\|^{p}_{L^{p}_\theta})^\frac{1}{p}u$. Since $0 \leq \nu < \lambda_{\alpha,\theta}$ we have 
	\begin{align*}
	  H_{\nu}^{p}(v)
        = \|u^{\prime}\|^{p}_{L^{p}_\alpha}+\nu\|u\|^{p}_{L^{p}_\theta}\big(\|u^{\prime}\|^{p}_{L^{p}_\alpha}-1\big)-\nu^2\|u\|^{2p}_{L^{p}_\theta}\leq \|u^{\prime}\|^{p}_{L^{p}_\alpha}\le 1.  
	\end{align*}
Thus, by applying the estimate \eqref{CT} to the function $v$, we obtain \eqref{PTM5}.
	\begin{theorem}\label{teoprinc2} Suppose the assumptions of Theorem~\ref{teoprinc1} hold. Then, $S(p,\nu,\theta,R)$ is attained for some function
	 $u_0 \in X^{1,p}_R(\alpha,\theta) \cap C^{1}[0,R]$ such that $H_{\nu}(u_0)=1$.
	\end{theorem}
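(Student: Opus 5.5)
Theorem~\ref{teoprinc2} will be proved by a blow-up analysis along subcritical maximizers, in the spirit of Carleson--Chang and Lin, adapted to the one-dimensional weighted setting.

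\emph{Step 1: subcritical maximizers and their equation.} For $0<\epsilon<\mu_{\alpha,\theta}$ consider
\[
S_\epsilon=\sup_{H_\nu(u)\le 1}\int_0^R e^{(\mu_{\alpha,\theta}-\epsilon)|u|^{\frac{p}{p-1}}}\,\mathrm{d}\lambda_\theta .
\]
For each $\epsilon$ this supremum is attained by some $u_\epsilon\ge0$ with $H_\nu(u_\epsilon)=1$. Indeed, $H_\nu$ is equivalent to $\|u'\|_{L^p_\alpha}$ because $\nu<\lambda_{\alpha,\theta}$. A maximizing sequence therefore converges weakly, and by the compact embedding \eqref{imersão} strongly in $L^q_\theta$ for every $q$. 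Since $(1-\nu/\lambda_{\alpha,\theta})\|u'\|^p_{L^p_\alpha}\le H_\nu^p(u)\le1$, the functions are bounded in $X^{1,p}_R(\alpha,\theta)$, and the subcritical exponent together with the Trudinger--Moser inequality \eqref{TM1} gives uniform integrability. Hence the limit attains $S_\epsilon$. Replacing $u$ by $|u|$ and then by its decreasing rearrangement (P\'olya--Szeg\H{o} in the weighted one-dimensional setting), we may take $u_\epsilon$ nonnegative and decreasing.

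The Euler--Lagrange equation reads
\[
-(r^\alpha|u_\epsilon'|^{p-2}u_\epsilon')'-\nu r^\theta u_\epsilon^{p-1}=\frac{r^\theta}{\lambda_\epsilon}\,u_\epsilon^{\frac1{p-1}}e^{(\mu_{\alpha,\theta}-\epsilon)u_\epsilon^{\frac{p}{p-1}}}.
\]
Regularity theory for this radial ODE gives $u_\epsilon\in C^1[0,R]$, and by monotone convergence $S_\epsilon\to S(p,\nu,\theta,R)$.

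\emph{Step 2: dichotomy.} Let $c_\epsilon=u_\epsilon(0)=\max u_\epsilon$. If $c_\epsilon$ stays bounded, then by elliptic (ODE) estimates $u_\epsilon\to u_0$ in $C^1$. The limit satisfies $H_\nu(u_0)\le1$ and attains $S$; by scaling, $H_\nu(u_0)=1$, since otherwise the functional could be increased. So assume $c_\epsilon\to\infty$.

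In that case:
\begin{enumerate}
\item $u_\epsilon\rightharpoonup0$ weakly, and $|u_\epsilon'|^p r^\alpha\,\mathrm{d}r\to\delta_0$ (concentration). This uses a Lions-type argument: if the weak limit $u_0$ were nonzero, then $H_\nu(u_\epsilon-u_0)^p\to1-H_\nu(u_0)^p<1$, which gives higher integrability and contradicts $c_\epsilon\to\infty$.
\item We rescale with $r_\epsilon^{\theta+1}=\lambda_\epsilon c_\epsilon^{-p/(p-1)}e^{-(\mu_{\alpha,\theta}-\epsilon)c_\epsilon^{p/(p-1)}}$ and set
\[
\varphi_\epsilon(s)=c_\epsilon^{\frac1{p-1}}\bigl(u_\epsilon(r_\epsilon s)-c_\epsilon\bigr).
\]
Then $\varphi_\epsilon$ converges to the explicit bubble
\[
\varphi(s)=-\frac{p-1}{\mu_{\alpha,\theta}}\log\Bigl(1+c\,s^{\frac{\theta+1}{p-1}}\Bigr),
\]
which solves the limiting Liouville-type ODE.
\item Away from the origin, $c_\epsilon^{1/(p-1)}u_\epsilon\to G$, where $G$ is the Green function of $-(r^\alpha|G'|^{p-2}G')'-\nu r^\theta G^{p-1}=\delta_0$. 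It has the expansion
\[
G(r)=-\frac{p-1}{\mu_{\alpha,\theta}}\log r+A_0+o(1).
\]
\end{enumerate}

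\emph{Step 3: upper bound under blow-up.} This is the main technical step. Using a capacity-type comparison on the annulus $r_\epsilon L\le r\le\delta$, in which $H_\nu$ is split into its inner, neck and outer parts, one obtains
\[
S\le\frac{\omega_\theta R_0^{\theta+1}}{\theta+1}\,e^{\mu_{\alpha,\theta}A_0+\gamma},
\]
an upper bound of Carleson--Chang type whose constant $\gamma$ is computed from the bubble in Step 2.

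\emph{Step 4: test functions.} Finally, we construct explicit test functions $\phi_\epsilon$ with $H_\nu(\phi_\epsilon)=1$. They equal the truncated Green function $G$ outside a radius $L\epsilon$ and a bubble profile inside it, matched carefully with a correction constant. For these we show
\[
\int_0^R e^{\mu_{\alpha,\theta}\phi_\epsilon^{p/(p-1)}}\,\mathrm{d}\lambda_\theta
\]
strictly exceeds the bound of Step 3. The hypotheses $p\ge2$ and $\theta\ge\alpha$ are expected to be needed in the expansion of $H_\nu^p$, where the $\nu\|\phi_\epsilon\|^p$ term must be shown to contribute a positive lower-order gain through an elementary convexity inequality $(a+b)^{p/(p-1)}\ge\cdots$. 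This contradicts Step 3, rules out blow-up, and yields the maximizer $u_0\in C^1[0,R]$ with $H_\nu(u_0)=1$.

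I expect the sharp test-function expansion in Step 4, specifically controlling the $\nu$-term and the Green constant $A_0$ together, to be the hardest part.
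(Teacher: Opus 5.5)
Your outline follows the same architecture as the paper: subcritical maximizers with $H_\nu(u_\epsilon)=1$, the bounded/blow-up dichotomy, the same rescaling $r_\epsilon$ and bubble, the Green-function limit of $c_\epsilon^{1/(p-1)}u_\epsilon$, a capacity estimate on the neck $[r_\epsilon L,\delta]$, and glued test functions. However, the quantitative core of Steps 2--4 is misstated in ways that break the final contradiction.

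\textbf{The Green function.} It behaves like $-\frac{\theta+1}{\mu_{\alpha,\theta}}\ln r=-\omega_\alpha^{-1/(p-1)}\ln r$, not $-\frac{p-1}{\mu_{\alpha,\theta}}\ln r$. Compare the tail $-\frac{\theta+1}{\mu_{\alpha,\theta}}\ln s$ of your own bubble, which is what must match across the neck. You also need a quantitative remainder, $G=-\frac{\theta+1}{\mu_{\alpha,\theta}}\ln r+A_0+z(r)$ with $z\in C^1[0,R]$ and $z(0)=z'(0)=0$, not merely $o(1)$, because every error in Step 4 must be $o(1/L)$.

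\textbf{The blow-up bound.} It is not a pure concentration term. Split at the level $c_\epsilon/\kappa$, use $\|(\min\{u_\epsilon,c_\epsilon/\kappa\})'\|^p_{L^p_\alpha}\to 1/\kappa$, and let $\kappa\to1$. This gives $S\le|B_R|_\theta+\limsup_{\epsilon\to0}\lambda_\epsilon c_\epsilon^{-p/(p-1)}$, and in fact equality holds under blow-up. The neck estimate then gives $\limsup_{\epsilon\to0}\lambda_\epsilon c_\epsilon^{-p/(p-1)}\le|B_1|_\theta e^{\mu_{\alpha,\theta}A_0+\Psi(p)+\gamma}$. Here $\Psi(p)+\gamma$ comes from $\int_0^a s^{p-1}(1+s)^{-p}\,\mathrm{d}s=\ln(1+a)-\Psi(p)-\gamma+O(1/a)$. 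Your Step 3 bound drops $|B_R|_\theta$. The factor $R_0^{\theta+1}$ is spurious, since $e^{\mu_{\alpha,\theta}A_0}$ already carries the $R^{\theta+1}$ scaling. So the target your test functions must strictly beat is the larger quantity $|B_R|_\theta+|B_1|_\theta e^{\mu_{\alpha,\theta}A_0+\Psi(p)+\gamma}$.

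\textbf{Where the strict gain comes from.} That target is beaten only by a lower-order gain, and you have misplaced its source: it is not the $\nu$-term. Since $G$ solves the equation with $\nu G^{p-1}$, integration by parts (in the paper's normalization) gives $\int_{L\epsilon}^R|G'|^p\,\mathrm{d}\lambda_\alpha=G(L\epsilon)+\nu\int_{L\epsilon}^R G^p\,\mathrm{d}\lambda_\theta+o(1/L)$. Hence the $\nu\|G\|^p_{L^p_\theta}$ contributions cancel against $\nu\|\phi_\epsilon\|^p_{L^p_\theta}$ in $H^p_\nu(\phi_\epsilon)$, and $\nu$ enters only through $A_0$, identically on both sides. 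With $L=-\ln\epsilon$ and $m_\epsilon$ the height of the bubble part, the normalization $H_\nu(\phi_\epsilon)=1$ forces $m_\epsilon^{p/(p-1)}=\frac{\theta+1}{\mu_{\alpha,\theta}}L+A_0+\frac{1}{\mu_{\alpha,\theta}}\ln\frac{\omega_\theta}{\theta+1}-\frac{p-1}{\mu_{\alpha,\theta}}(\Psi(p)+\gamma)+O(L^{-(\theta+1)/(p-1)})$. The inner region then contributes at least $|B_1|_\theta e^{\mu_{\alpha,\theta}A_0+\Psi(p)+\gamma}+O(L^{-(\theta+1)/(p-1)})$. On $[L\epsilon,R]$, the inequality $e^t\ge1+d_p\,t^{p-1}$ for $t\ge0$ (this is where $p\ge2$ is used) gives at least $|B_R|_\theta+d_p\mu_{\alpha,\theta}^{p-1}\|G\|^p_{L^p_\theta}m_\epsilon^{-p/(p-1)}+O(L^{-(\theta+1)/(p-1)})$. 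The gain is of order $1/L$. It dominates the errors because $\theta\ge\alpha=p-1$ gives $(\theta+1)/(p-1)>1$.

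\textbf{Step 1.} Here $H_\nu(u)\le1$ only gives $\|u'\|^p_{L^p_\alpha}\le(1-\nu/\lambda_{\alpha,\theta})^{-1}$, which may exceed $1$. So the subcritical exponent alone does not give uniform integrability; you already need the Lions-type improvement there. A zero weak limit forces $\limsup\|u_j'\|^p_{L^p_\alpha}\le1$, and a nonzero limit $u$ allows exponents up to $(1-H^p_\nu(u))^{-1/(p-1)}\mu_{\alpha,\theta}$. Monotonicity of $u_\epsilon$ follows from the integral form of the Euler--Lagrange equation, so you do not need a weighted P\'olya--Szeg\H{o} inequality.
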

    Theorems \ref{teoprinc1} and \ref{teoprinc2} provide an alternative in fractional-dimensional spaces to the recent result obtained by Nguyen \cite{Nguyen2017} in the context of classical Sobolev spaces $W^{1,n}_0(\Omega)$. In the same spirit as \cite{JFcasocritico,CV}, to prove both Theorem~\ref{teoprinc1} and Theorem~\ref{teoprinc2} we combine a Lions-type uniform estimate  with blow-up analysis and refined test-function computations. To make the blow-up procedure available in the fractional-dimensional setting, classification results, the construction of Green-type functions, and delicate computations involving special functions are required.
    
	\section{The subcritical inequalities and their extremal functions}
    In this section we prove both Theorem~\ref{teoprinc1} and Theorem~\ref{teoprinc2} for the subcritical regime. Precisely,  let $\mu_\varepsilon = \mu_{\alpha,\theta} - \varepsilon$ with $0<\varepsilon<\mu_{\alpha,\theta}$ and define
	\begin{equation}\label{caeceps}
		S_{\varepsilon}(p,\nu,\theta,R) = \sup_{u \in X^{1,p}_R(\alpha,\theta),\; H_{\nu}(u) \leq 1} \int_0^R e^{\mu_\varepsilon|u|^{\frac{p}{p-1}}} \mathrm{d}\lambda_{\theta}, 
	\end{equation}
    where $0 \leq \nu < \lambda_{\alpha,\theta}$. We will show that $S_{\varepsilon}(p,\nu,\theta,R)<\infty$ and it is attained. The first ingredient in our proof is the following Lions-type estimate.
	\begin{lemma}[Lions-type estimate]\label{improvement lions}
	 Suppose $0 \leq \nu < \lambda_{\alpha,\theta}$. Let $(u_j) \in X^{1,p}_R(\alpha,\theta)$ be a sequence such that $H_{\nu}(u_j) = 1$ and $u_j \rightharpoonup u$ in $X^{1,p}_R(\alpha,\theta)$. Then, for any $0 < q < P_{H}(u)$, we have
		\begin{equation}\label{e-Lions}
			\limsup_{j \to \infty} \int_0^R e^{q\mu_{\alpha,\theta}|u_j|^\frac{p}{p-1}} \mathrm{d}\lambda_{\theta} < \infty,
		\end{equation}
        where
        \begin{equation}
           P_H(u)=\left\{\begin{aligned}
            & (1 - H^p_{\nu}(u))^{-\frac{1}{p-1}},\;\;&\mbox{if}&\;\;    H_{\nu}(u)<1\\
            &+\infty\;\; &\mbox{if}&\;\;  H_{\nu}(u)=1.
            \end{aligned}\right.
        \end{equation}
	\end{lemma}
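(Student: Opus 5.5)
Set $v_j=u_j-u$, so $v_j\rightharpoonup 0$ in $X^{1,p}_R(\alpha,\theta)$. By the compact embedding \eqref{imersão}, $u_j\to u$ in $L^p_\theta$, hence $\|u_j\|^p_{L^p_\theta}\to\|u\|^p_{L^p_\theta}$. The plan is to estimate $\limsup_j\|v_j'\|^p_{L^p_\alpha}$ by $1-H^p_\nu(u)$ and then apply \eqref{TM1} to $v_j$ after normalization. The Lions-type exponent appears through the elementary splitting $|u_j|^{p/(p-1)}\le(1+\delta)|v_j|^{p/(p-1)}+C_\delta|u|^{p/(p-1)}$.

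\emph{Step 1: the norm of $v_j$.} Since $p\ge2$, I would use a Brezis--Lieb type identity for the gradient. The preferred version is
\[
\|u_j'\|^p_{L^p_\alpha}=\|u'\|^p_{L^p_\alpha}+\|v_j'\|^p_{L^p_\alpha}+o(1),
\]
valid when $u_j'\to u'$ a.e. Without a.e.\ convergence of the derivatives, the fallback is the inequality $|a+b|^p\ge|a|^p+|b|^p+p|a|^{p-2}ab+p|b|^{p-2}ba\cdot c_p$, or simply the Clarkson-type inequality $|a|^p+|b|^p\le |a+b|^p-p|a|^{p-2}a\,b\cdot\ldots$, valid for $p\ge2$. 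With weak convergence, the cross term $\int|u'|^{p-2}u'v_j'\,\mathrm d\lambda_\alpha\to0$. What is needed is only
\[
\limsup_j\|v_j'\|^p_{L^p_\alpha}\le \lim_j\|u_j'\|^p_{L^p_\alpha}-\|u'\|^p_{L^p_\alpha}.
\]
Since $\|u_j'\|^p_{L^p_\alpha}=1+\nu\|u_j\|^p_{L^p_\theta}\to 1+\nu\|u\|^p_{L^p_\theta}$, this yields $\limsup_j\|v_j'\|^p_{L^p_\alpha}\le 1-H^p_\nu(u)$. I expect this step to be the main obstacle. I would first try the elementary inequality $|a+b|^p\ge |a|^p+p|a|^{p-2}ab+c_p|b|^p$ with $c_p=2^{1-p}$ for $p\ge2$, but that loses a constant. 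The sharp route is therefore to show $u_j'\to u'$ a.e. This can be done by a radial argument: for these one-dimensional weighted functions, weak convergence together with local uniform convergence of $u_j$ on compact subsets of $(0,R]$ (which follows from \eqref{imersão} and the $AC_{loc}$ structure) lets one extract a.e.\ convergence of derivatives via the Euler--Lagrange structure. Alternatively, one avoids it by working with $\limsup$ of $\|u_j'-u'\|$ through the Brezis--Lieb lemma for a subsequence. If $H_\nu(u)=1$, then $\|v_j'\|\to0$, and every $q$ is allowed.

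\emph{Step 2: the exponential estimate.} Fix $q<P_H(u)$, so that $q^{p-1}(1-H^p_\nu(u))<1$. Choose $\delta>0$ and $s>1$ close to $1$ such that $s q(1+\delta)\limsup_j\|v_j'\|^{p/(p-1)}_{L^p_\alpha}<1$; this is possible because $q(1-H^p_\nu(u))^{1/(p-1)}<1$. By Hölder with exponents $s$ and $s'$,
\[
\int_0^R e^{q\mu_{\alpha,\theta}|u_j|^{p/(p-1)}}\mathrm d\lambda_\theta\le\Big(\int_0^R e^{sq(1+\delta)\mu_{\alpha,\theta}|v_j|^{p/(p-1)}}\mathrm d\lambda_\theta\Big)^{1/s}\Big(\int_0^R e^{s'qC_\delta\mu_{\alpha,\theta}|u|^{p/(p-1)}}\mathrm d\lambda_\theta\Big)^{1/s'}.
\]
The second factor is finite because $e^{\mu|u|^{p/(p-1)}}\in L^1_\theta$ for every $\mu>0$, which is the result of \cite{JF2014} quoted above. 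For the first factor, write $|v_j|^{p/(p-1)}=\|v_j'\|^{p/(p-1)}(|v_j|/\|v_j'\|)^{p/(p-1)}$. For large $j$ the coefficient in front of $\mu_{\alpha,\theta}$ is at most $1$, so \eqref{TM1} bounds the first factor by $c$. The case $v_j'\equiv0$ is trivial. Taking $\limsup$ then gives \eqref{e-Lions}.
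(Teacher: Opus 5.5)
\textbf{Gap in Step~1.} Your Step~2 is fine once you have $\limsup_j\|v_j'\|^p_{L^p_\alpha}\le 1-H^p_\nu(u)$. But Step~1 does not deliver this bound, and with $v_j=u_j-u$ it is false in general for $p>2$. Weak convergence kills the cross term only when $p=2$ (Hilbert identity). For $p>2$ the defect $\|u_j'\|^p_{L^p_\alpha}-\|u'\|^p_{L^p_\alpha}-\|(u_j-u)'\|^p_{L^p_\alpha}$ can have a negative limit.

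For instance, take $p=3$ and $u'\equiv1$ on an interval $I\subset(0,R)$. Let $w_j=(u_j-u)'$ be a fine, cellwise mean-zero oscillation supported in $I$, equal to $-2$ on a proportion $t$ of each cell and to $2t/(1-t)$ on the rest. Then $w_j\rightharpoonup0$ in $L^3_\alpha$ and $u_j\to u$ uniformly. Since $|1+b|^3-1-3b-|b|^3$ equals $-2$ at $b=-2$ and $3b^2$ for $b\ge0$, the defect tends to $\big(-2t+12t^2/(1-t)\big)\int_I\mathrm{d}\lambda_\alpha$, which is negative for $t<1/7$. After rescaling so that $H_\nu(u_j)=1$, this is an admissible sequence for which your argument only yields $q<(\limsup_j\|v_j'\|^p_{L^p_\alpha})^{-1/(p-1)}<P_H(u)$.

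None of the repairs you list closes the gap:
\begin{itemize}
\item An arbitrary sequence $(u_j)$ satisfies no Euler--Lagrange equation; the lemma is applied to a mere maximizing sequence in Lemma~\ref{prop1}.
\item Local uniform convergence of $u_j$ does not give a.e.\ convergence of $u_j'$; the oscillation above is a counterexample.
\item Brezis--Lieb needs exactly that a.e.\ convergence, whether or not you pass to a subsequence.
\end{itemize}
Only the endpoint cases go through. The case $u\equiv0$ is trivial. If $H_\nu(u)=1$, then $\|u_j'\|_{L^p_\alpha}\to\|u'\|_{L^p_\alpha}$, so uniform convexity gives $u_j'\to u'$ strongly.

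\textbf{How to fix it, and what the paper does.} The standard way to get the sharp splitting is to truncate instead of subtracting $u$. Set $T^Lu_j=\max\{-L,\min\{u_j,L\}\}$ and $T_Lu_j=u_j-T^Lu_j$. Their derivatives have disjoint supports, so $\|u_j'\|^p_{L^p_\alpha}=\|(T^Lu_j)'\|^p_{L^p_\alpha}+\|(T_Lu_j)'\|^p_{L^p_\alpha}$ holds exactly. Since $T^Lu_j\rightharpoonup T^Lu$, weak lower semicontinuity gives $\limsup_j\|(T_Lu_j)'\|^p_{L^p_\alpha}\le1+\nu\|u\|^p_{L^p_\theta}-\|(T^Lu)'\|^p_{L^p_\alpha}$, and the right-hand side tends to $1-H^p_\nu(u)$ as $L\to\infty$. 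Moreover $|u_j|\le|T_Lu_j|+L$, which turns your second H\"older factor into a constant.

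The paper avoids the issue altogether. It normalizes $v_j=u_j/\|u_j'\|_{L^p_\alpha}$ and notes that $v_j\rightharpoonup u/(1+\nu\|u\|^p_{L^p_\theta})^{1/p}$. A scaling computation then shows that $q\|u_j'\|^{p/(p-1)}_{L^p_\alpha}$ is eventually below $(1-\|v'\|^p_{L^p_\alpha})^{-1/(p-1)}$. Finally it invokes the already established Lions-type theorem \cite[Theorem~1]{JFcasocritico} for the plain Dirichlet norm, where the gradient-splitting difficulty has already been dealt with.
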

	
	\begin{proof} If  $u \equiv 0$,  then $H_{\nu}(u)=0$ and \eqref{imersão} ensures $\|u_j^\prime\|^{p}_{L^{p}_\alpha}=1+\nu\|u_j\|^{p}_{L^{p}_\theta}\to 1$ as $j \to \infty$. Thus, \eqref{TM1} yields \eqref{e-Lions} for $0<q<1$. Assume $u\not\equiv 0$. Hence,
    \begin{equation}\label{grad-no0)}
        \|u_j^\prime\|^{p}_{L^{p}_\alpha} = 1 + \nu\|u_j\|^{p}_{L^{p}_\theta} \to 1 + \nu\|u\|^{p}_{L^{p}_\theta},\;\;\mbox{as}\;\;j \to \infty.
    \end{equation}
    Set $v_j = u_j \slash \|u_j^\prime\|_{L^{p}_\alpha}$. Then 
    \begin{equation}\label{u_jtov}
     \|v_j^\prime\|_{L^{p}_\alpha} = 1\;\;\mbox{and} \;\; v_j \rightharpoonup v := u/(1 + \nu\|u\|^{p}_{L^{p}_\theta})^{1/p}\;\;\mbox{in}\;\;  X^{1,p}_R(\alpha,\theta).    
    \end{equation}
    Suppose $ H_{\nu}(u)<1$. Then, for $q < (1 - H^p_{\nu}(u))^{-\frac{1}{p-1}}$, from \eqref{grad-no0)} we have 
		\begin{eqnarray*}
			\lim_{j \to \infty} q\|u^\prime_j\|^{\frac{p}{p-1}}_{L^{p}_\alpha} &=& q(1 + \nu \|u\|^{p}_{L_\theta^{p}})^\frac{1}{p-1}\\
			&<& \left(\frac{1 + \nu\|u\|^{p}_{L_\theta^{p}}}{1 - \|u^\prime\|^{p}_{L_\alpha^{p}} + \nu\|u\|^{p}_{L_\theta^{p}}}\right)^\frac{1}{p-1} = (1 - \|v^{\prime}\|^{p}_{L_\alpha^{p}})^{-\frac{1}{p-1}}.
		\end{eqnarray*}
	Consequently, we can choose $\overline{q} < (1 - \|v^{\prime}\|^{p}_{L_\alpha^{p}})^{-\frac{1}{p-1}}$ such that $q\|u^\prime_j\|^{\frac{p}{p-1}}_{L^{p}_\alpha} < \overline{q}$, for $j$ sufficiently large. Thus,  \cite[Theorem~1]{JFcasocritico} ensures
		\begin{equation*}
        \begin{aligned}
        \limsup_{j \to \infty}\int_0^R e^{\mu_{\alpha,\theta}q|u_j|^\frac{p}{p-1}} \mathrm{d}\lambda_{\theta} &=\limsup_{j \to \infty} \int_0^R e^{\mu_{\alpha,\theta}q\|u^\prime_j\|_{L^{p}_\alpha}^\frac{p}{p-1}|v_j|^\frac{p}{p-1}} \mathrm{d}\lambda_{\theta}\\
        &\le \limsup_{j \to \infty} \int_0^R e^{\mu_{\alpha,\theta}\overline{q}|v_j|^\frac{p}{p-1}} \mathrm{d}\lambda_{\theta}<\infty,
        \end{aligned}
		\end{equation*}
        for any $0 < q <(1 - H^p_{\nu}(u))^{-\frac{1}{p-1}}$.

        If $ H_{\nu}(u)=1$, then $\|v^{\prime}\|^{p}_{L^{p}_{\alpha}}=1$. Since $(1-\frac{\nu}{\lambda_{\alpha,\theta}})\|u^{\prime}_j\|^p_{L^p_{\alpha}}\le H^p_{\nu}(u_j)=1$ ensures $\|u^{\prime}_j\|^{p/(p-1)}_{L^p_{\alpha}}\le \bar{c}$, from \cite[Theorem~1]{JFcasocritico} again we have
        \begin{equation*}
        \begin{aligned}
        \limsup_{j \to \infty}\int_0^R e^{\mu_{\alpha,\theta}q|u_j|^\frac{p}{p-1}} \mathrm{d}\lambda_{\theta} &=\limsup_{j \to \infty} \int_0^R e^{\mu_{\alpha,\theta}q\|u^\prime_j\|_{L^{p}_\alpha}^\frac{p}{p-1}|v_j|^\frac{p}{p-1}} \mathrm{d}\lambda_{\theta}\\
        &\le \limsup_{j \to \infty} \int_0^R e^{\mu_{\alpha,\theta}q\overline{c}|v_j|^\frac{p}{p-1}} \mathrm{d}\lambda_{\theta}<\infty,
        \end{aligned}
		\end{equation*}
        for any $0<q<P_{H}(u)=+\infty$.
	\end{proof}
	Now, we are in  a position to prove our results in the subcritical regime. 
	\begin{lemma}\label{prop1}
		Suppose $p\ge 2 $ and $\theta \geq \alpha =p-1$. For each $\varepsilon > 0$ we have $S_{\varepsilon}(p,\nu,\theta,R)<\infty$. Moreover, there exists a  function $u_\varepsilon \in X^{1,p}_R(\alpha,\theta) \cap C^1[0,R]$ such that 
		\begin{equation*}
			S_{\varepsilon}(p,\nu,\theta,R)  = \int_0^R e^{\mu_\varepsilon|u_\varepsilon|^{\frac{p}{p-1}}} \mathrm{d}\lambda_{\theta},\;\;\; H_{\nu}(u_\varepsilon) = 1
		\end{equation*}
		and
		\begin{equation}\label{Lagrange}
			\left\{\begin{aligned}
			 & \int_0^R |u_\varepsilon^\prime|^{p-2} u_\varepsilon^\prime v^\prime \mathrm{d}\lambda_\alpha = \frac{1}{\lambda_\varepsilon} \int_0^Re^{\mu_\varepsilon u_\varepsilon^\frac{p}{p-1}} u_\varepsilon^\frac{1}{p-1}v\mathrm{d}\lambda_{\theta} + \nu \int_{0}^{R}u_\varepsilon^{p-1}v \mathrm{d}\lambda_{\theta},\;\; \forall\; v\in X^{1,p}_R(\alpha,\theta)\\
				&\lambda_\varepsilon = \int_0^R e^{\mu_\varepsilon u_\varepsilon^\frac{p}{p-1}} u_\varepsilon^\frac{p}{p-1} \mathrm{d}\lambda_{\theta} \\
				&u_\varepsilon \in X^{1,p}_R(\alpha,\theta), \;\; u_\varepsilon \geq 0 \;\;\mbox{and}\;\; H_{\nu}(u_\varepsilon) = 1.  
			\end{aligned}\right.
		\end{equation}
	 Furthermore,
		\begin{equation*}
			\lim_{\varepsilon \to 0} S_{\varepsilon}(p,\nu,\theta,R) =S(p,\nu,\theta,R)
		\end{equation*}
    and 
    \begin{equation}\label{lambda-from below}
        \liminf_{\varepsilon \to 0} \lambda_\varepsilon>0.
    \end{equation}
	\end{lemma}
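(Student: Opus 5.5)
We argue by the direct method, using Lemma~\ref{improvement lions} to get finiteness and compactness simultaneously. Since $H_\nu(|u|)=H_\nu(u)$ and the integrand depends on $|u|$, we may restrict to nonnegative functions. We also restrict to $H_\nu(u)=1$: if $0<H_\nu(u)<1$, then $u/H_\nu(u)$ is admissible and gives a strictly larger integral. Take a maximizing sequence $u_j\ge0$ with $H_\nu(u_j)=1$. Because $(1-\nu/\lambda_{\alpha,\theta})\|u_j'\|^p_{L^p_\alpha}\le 1$, the sequence is bounded. Passing to a subsequence, $u_j\rightharpoonup u_\varepsilon$ weakly, $u_j\to u_\varepsilon$ in every $L^q_\theta$ by the compact embedding \eqref{imersão}, and $u_j\to u_\varepsilon$ a.e. Lemma~\ref{improvement lions} gives some $q>1$ with $q\mu_\varepsilon<\mu_{\alpha,\theta}$ whenever $u_\varepsilon\equiv0$ (there $P_H=1$). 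When $u_\varepsilon\not\equiv0$ we have $P_H(u_\varepsilon)\ge1$, so any $q\in(1,\mu_{\alpha,\theta}/\mu_\varepsilon)$ below $P_H$ works. In either case $e^{\mu_\varepsilon u_j^{p/(p-1)}}$ is bounded in $L^{r}_\theta$ for some $r>1$. The same bound applied to an arbitrary sequence shows $S_\varepsilon<\infty$: a sequence with integrals tending to $\infty$ would contradict this uniform bound after extraction. Vitali's theorem then gives $\int e^{\mu_\varepsilon u_j^{p/(p-1)}}\to\int e^{\mu_\varepsilon u_\varepsilon^{p/(p-1)}}$. By weak lower semicontinuity of $\|u'\|_{L^p_\alpha}$ and strong $L^p_\theta$ convergence, $H_\nu(u_\varepsilon)\le1$. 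Since $S_\varepsilon>|B_R|_\theta=\int_0^R\mathrm{d}\lambda_\theta$, we get $u_\varepsilon\not\equiv0$, and the scaling argument above forces $H_\nu(u_\varepsilon)=1$. So $u_\varepsilon$ is a maximizer.

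The Euler--Lagrange system \eqref{Lagrange} follows from Lagrange multipliers for the $C^1$ functional $F(u)=\int e^{\mu_\varepsilon|u|^{p/(p-1)}}\mathrm{d}\lambda_\theta$ on the constraint $\{H^p_\nu=1\}$. Differentiability of $F$ uses the exponential integrability from \cite{JF2014}, and the constraint is nondegenerate because $\langle (H_\nu^p)'(u),u\rangle=pH_\nu^p(u)=p\neq0$. We obtain $F'(u_\varepsilon)=\tau (H^p_\nu)'(u_\varepsilon)$. Testing with $v=u_\varepsilon$ identifies $\tau$ in terms of $\lambda_\varepsilon$, and after normalizing constants (absorbed into $\lambda_\varepsilon$ as written) this gives \eqref{Lagrange}. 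For regularity, write the weak equation in radial form,
\[
-(r^{\alpha}|u'|^{p-2}u')'=r^\theta f(r),
\]
with $f\in L^s_\theta$ for large $s$, and integrate from $0$. Since $\theta\ge\alpha$, the flux satisfies $r^{\alpha}|u'|^{p-1}=\int_0^r t^\theta f\,\mathrm{d}t$. This yields $|u'(r)|^{p-1}\le C r^{\theta-\alpha+1-\cdot}$, which is bounded near $0$. A bootstrap then gives $u_\varepsilon\in L^\infty$ and $u_\varepsilon\in C^1[0,R]$. This regularity step, especially at $r=0$, is where $\theta\ge\alpha$ is needed, and it is where I expect the most care.

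For the convergence of $S_\varepsilon$, note first that $S_\varepsilon$ is nondecreasing as $\varepsilon\downarrow0$ and satisfies $S_\varepsilon\le S$. For any admissible $u$, monotone convergence gives $\int e^{\mu_\varepsilon|u|^{p/(p-1)}}\to\int e^{\mu_{\alpha,\theta}|u|^{p/(p-1)}}$, and this integral is finite by \cite{JF2014}. Hence $\lim S_\varepsilon\ge S$, which gives equality even if $S=\infty$ a priori. Finally, \eqref{lambda-from below} follows from $e^t\le 1+te^t$, which gives
\[
S_\varepsilon\le |B_R|_\theta+\mu_\varepsilon\lambda_\varepsilon .
\]
Since $S_\varepsilon\ge S_{\varepsilon_0}>|B_R|_\theta$ for $\varepsilon<\varepsilon_0$, we conclude $\liminf\lambda_\varepsilon\ge (S_{\varepsilon_0}-|B_R|_\theta)/\mu_{\alpha,\theta}>0$.
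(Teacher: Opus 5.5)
Your proposal is correct and follows essentially the same route as the paper. Both use a maximizing sequence, Lemma~\ref{improvement lions} plus Vitali for compactness, rescaling to force $H_\nu(u_\varepsilon)=1$, Lagrange multipliers, the integrated flux identity for regularity, monotonicity in $\varepsilon$ for $S_\varepsilon\to S$, and $e^t\le 1+te^t$ for $\lambda_\varepsilon$.

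Your small deviations are all fine, and a few are improvements:
\begin{itemize}
\item Normalizing the sequence to $H_\nu(u_j)=1$ is exactly the hypothesis of Lemma~\ref{improvement lions}.
\item The H\"older bound $\int_0^r f\,\mathrm{d}\lambda_\theta\le C r^{(\theta+1)(1-1/s)}$ with $f\in L^s_\theta$ replaces the pointwise decay \eqref{lemma-chave} that the paper cites.
\item Keeping the factor $\mu_\varepsilon$ in $S_\varepsilon\le|B_R|_\theta+\mu_\varepsilon\lambda_\varepsilon$ is the correct form of that inequality; the paper drops it.
\end{itemize}
One correction: the vanishing of the flux at $r=0$ does not come from $\theta\ge\alpha$. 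It comes from testing the weak equation with functions like $v_\rho$ in \eqref{teste}, or from $u'\in L^p_\alpha$ ruling out a $C/r^{\alpha}$ term. The condition $\theta\ge\alpha$ is what makes your exponent $\theta-\alpha+1-(\theta+1)/s$ positive for large $s$.
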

	
	\begin{proof}
		Let $(u_j) \subset X^{1,p}_R(\alpha,\theta)$ be a maximizing sequence for $S_{\varepsilon}(p,\nu,\theta,R)$, that is, 
		\begin{equation}
		H_{\nu}(u_j) \leq 1\;\;\mbox{and}\;\;	\lim_{j \to \infty} \int_0^R e^{\mu_\varepsilon |u_j|^{\frac{p}{p-1}}} \mathrm{d}\lambda_{\theta} = S_{\varepsilon}(p,\nu,\theta,R).
		\end{equation}
		Note that $(|u_j|)^\prime=\operatorname{sgn}(u_j)u^\prime_j$. Therefore $H_{\nu}(|u_j|) \leq H_{\nu}(u_j) \leq 1$, so $|u_j|$ is also a maximizing sequence for $S_{\varepsilon}(p,\nu,\theta,R)$. Thus, we can replace $u_j$ with $|u_j|$ and assume that $u_j$ is non-negative. From the definition of $\lambda_{\alpha,\theta}$, we have
		\begin{equation*}
		1 \geq \|u_j^\prime\|^{p}_{L^{p}_\alpha} - \nu\|u_j\|^{p}_{L^{p}_\theta} \geq \left(1 - \frac{\nu}{\lambda_{\alpha,\theta}}\right)\|u_j^\prime\|^{p}_{L^{p}_\alpha}.
		\end{equation*}
		Since $\nu < \lambda_{\alpha,\theta}$, the sequence $(u_j)$ is bounded in $X^{1,p}_R(\alpha,\theta)$. Therefore, \eqref{imersão} yields
		\begin{equation*}
			u_j \rightharpoonup u_\varepsilon \text{ in } X^{1,p}_R(\alpha,\theta), \quad u_j \to u_\varepsilon \quad\text{ in }\quad L^{p}_\theta, \quad u_j(r) \to u_\varepsilon(r)\quad \text{a.e. in}\quad (0,R).
		\end{equation*}
	By weak convergence, we obtain $\|u_\varepsilon^\prime\|^{p}_{L^{p}_\alpha} \leq \liminf \|u_j^\prime\|^{p}_{L^{p}_\alpha}$, hence $H_{\nu}(u_\varepsilon) \leq 1$. Naturally, $u_\varepsilon \geq 0$. We claim that $ u_\varepsilon \not\equiv 0 $. Indeed, if $ u_\varepsilon \equiv 0 $, we have 
		$$
		\limsup \|u_j^\prime\|^{p}_{L^{p}_\alpha} \leq 1 + \nu\limsup \|u_j\|^{p}_{L^{p}_\theta} \leq 1.
		$$
		By the Trudinger-Moser inequality in \cite{JF2014}, the sequence $ e^{\mu_\varepsilon |u_j|^{\frac{p}{p-1}}} $ is uniformly bounded in $ L^q_\theta $ for $ 1 < q < \mu_{\alpha,\theta}/{\mu_\varepsilon} $. Using Vitali’s convergence theorem, we have
		
		\begin{equation}
			S_{\varepsilon}(p,\nu,\theta,R)=\lim_{j\to \infty}\int_0^Re^{\mu_\varepsilon|u_j|^\frac{p}{p-1}}\mathrm{d}\lambda_{\theta}= \int_0^R \mathrm{d}\lambda_{\theta}=|B_R|_{\theta}
		\end{equation}
	which is impossible, as we will see next. Thus, we conclude the claim. Now, if $u_\varepsilon \not\equiv 0$, by Lemma~\ref{improvement lions}, the sequence $ e^{\mu_\varepsilon |u_j|^\frac{p}{p-1}} $ is bounded in $ L^q_\theta $ for some $ q > 1 $. Consequently, we have 
		\begin{equation*}
			S_{\varepsilon}(p,\nu,\theta,R) = \lim_{j \to \infty} \int_0^R e^{\mu_\varepsilon |u_j|^\frac{p}{p-1}} \mathrm{d}\lambda_{\theta} = \int_0^R e^{\mu_\varepsilon |u_\varepsilon|^\frac{p}{p-1}} \mathrm{d}\lambda_{\theta}.
		\end{equation*}	
		This shows that $S_{\varepsilon}(p,\nu,\theta,R)<\infty$, since $ e^{\mu_\varepsilon |u|^\frac{p}{p-1}} \in L^1_\theta $ for any  $ u \in X^{1,p}_R(\alpha,\theta)$, from \cite{JF2014}. Furthermore, $ u_\varepsilon $ is a non-negative  maximizer for $ S_{\varepsilon}(p,\nu,\theta,R) $.  We must have $ H_{\nu}(u_\varepsilon) = 1 $, otherwise, we could choose $ a > 1 $ such that $ H_{\nu}(a u_\varepsilon) = 1 $, which would lead to 
		$$
		S_{\varepsilon}(p,\nu,\theta,R) \geq \int_0^R e^{\mu_\varepsilon |a u_\varepsilon|^\frac{p}{p-1}} \mathrm{d}\lambda_{\theta} > \int_0^R e^{\mu_\varepsilon |u_\varepsilon|^\frac{p}{p-1}} \mathrm{d}\lambda_{\theta} = S_{\varepsilon}(p,\nu,\theta,R),
		$$
		which is a contradiction. By the Lagrange multipliers theorem, we get that $ u_\varepsilon $ satisfies
        \begin{equation}\label{eqlagrange}
            \int_0^R |u_\varepsilon^\prime|^{p-2} u_\varepsilon^\prime v^\prime \mathrm{d}\lambda_\alpha = \frac{1}{\lambda_\varepsilon} \int_0^R  e^{\mu_\varepsilon u_\varepsilon^\frac{p}{p-1}} u_\varepsilon^\frac{1}{p-1}v \mathrm{d}\lambda_{\theta} + \nu \int_{0}^{R}u_\varepsilon^{p-1}v \mathrm{d}\lambda_{\theta}, \;\; \mbox{for all}\;\; v \in X^{1,p}_R(\alpha,\theta),
        \end{equation}
        where  $\lambda_\varepsilon$ is given in \eqref{Lagrange}. We will proceed to show that $u_\varepsilon\in C^1[0,R] $. Following \cite{Clement-deFigueiredo-Mitidieri}, we consider the test function $ v_\rho $ given by 
		\begin{equation}\label{teste}
			v_{\rho}(r) = 
			\begin{cases}
				1, & \text{if }\; 0 \leq r \leq s \\
				1 + \frac{1}{\rho}(s-r), & \text{if } \;s \leq r \leq s + \rho \\
				0, & \text{if }\; s+\rho\le r \le R.
			\end{cases}
		\end{equation}
		By using $v_\rho$ in \eqref{eqlagrange} and letting $ \rho \to 0 $, we obtain the integral representation
		\begin{equation}
			-|u_\varepsilon^\prime|^{p-2} u_\varepsilon^\prime = \frac{1}{\omega_\alpha r^\alpha}\int_0^r \left( \frac{1}{\lambda_\varepsilon} e^{\mu_\varepsilon u_\varepsilon^\frac{p}{p-1}} u_\varepsilon^\frac{1}{p-1} + \nu u_\varepsilon^{p-1} \right) \mathrm{d} \lambda_\theta
		\end{equation}
or 
		\begin{equation}\label{defuprime}
			-u_\varepsilon^\prime(r) = \left(\frac{1}{r^\alpha \omega_\alpha} \int_0^r \Big( \frac{1}{\lambda_\varepsilon} e^{\mu_\varepsilon u_\varepsilon^\frac{p}{p-1}} u_\varepsilon^\frac{1}{p-1} + \nu u_\varepsilon^{p-1} \Big) \mathrm{d} \lambda_\theta \right)^{\frac{1}{p-1}}.
		\end{equation}
        Thus, $ u_\varepsilon \in C^1(0,R] $. In order to get the regularity at $r=0$, we first note that arguing as in \cite[Lemma~7]{CV} we can see that 
\begin{equation}\label{lemma-chave}
    \lim_{r\to 0^{+}}r^{\sigma}\left[\frac{1}{\lambda_\varepsilon} e^{\mu_\varepsilon u_\varepsilon^\frac{p}{p-1}} u_\varepsilon^\frac{1}{p-1} + \nu u_\varepsilon^{p-1} \right]=0,\;\; \mbox{for all}\;\; \sigma>0.
\end{equation}
   Recalling that we are assuming $\theta\ge \alpha$,  from L'Hospital's rule and \eqref{lemma-chave}
        \begin{align*}
          \lim_{r \to 0^+}\frac{1}{r^\alpha} \int_0^r \left( \frac{1}{\lambda_\varepsilon} e^{\mu_\varepsilon u_\varepsilon^\frac{p}{p-1}} u_\varepsilon^\frac{1}{p-1} + \nu u_\varepsilon^{p-1} \right) \mathrm{d} \lambda_\theta& =\frac{\omega_{\theta}}{\alpha} \lim_{r\to 0}r^{\theta-\alpha+1}\left[ \frac{1}{\lambda_\varepsilon} e^{\mu_\varepsilon u_\varepsilon^\frac{p}{p-1}} u_\varepsilon^\frac{1}{p-1} + \nu u_\varepsilon^{p-1} \right]=0.
        \end{align*}
	Thus, from \eqref{defuprime} we have $ u_\varepsilon^\prime(0) = 0 $, and consequently $ u_\varepsilon \in C^1[0,R] \cap X^{1,p}_R(\alpha,\theta)$.
		
     Now, we observe that 
     \begin{equation}\label{limisup<}
         \limsup_{\varepsilon\to 0} S_{\varepsilon}(p,\nu,\theta,R)  \leq S(p,\nu,\theta,R).
     \end{equation}
     On the other hand, for $ u \in X^{1,p}_R(\alpha,\theta) $ with $ H_{\nu}(u) \leq 1 $ the  Fatou's lemma ensures
		$$
		\int_0^R e^{\mu_{\alpha,\theta} |u|^\frac{p}{p-1}} \mathrm{d}\lambda_{\theta} \leq \liminf_{\varepsilon \to 0} \int_0^R e^{\mu_\varepsilon |u|^\frac{p}{p-1}} \mathrm{d}\lambda_{\theta} \le \liminf_{\varepsilon \to 0} S_{\varepsilon}(p,\nu,\theta,R)
		$$
		and thus 
        \begin{equation}\label{limiinf>}
           S(p,\nu,\theta,R)\le\liminf_{\varepsilon \to 0} S_{\varepsilon}(p,\nu,\theta,R).
        \end{equation} 
        From \eqref{limisup<} and \eqref{limiinf>}, we conclude that
		\begin{equation*}
		\lim_{\varepsilon \to 0} S_{\varepsilon}(p,\nu,\theta,R)  =S(p,\nu,\theta,R). 
		\end{equation*}	
		Finally, the elementary inequality $ e^t \le 1 + t e^t $, $ t \geq 0 $ implies
		$$
		\lambda_\varepsilon = \int_0^R e^{\mu_\varepsilon u_\varepsilon^\frac{p}{p-1}} u_\varepsilon^\frac{p}{p-1} \mathrm{d}\lambda_{\theta} \geq \int_0^R e^{\mu_\varepsilon u_\varepsilon^\frac{p}{p-1}} \mathrm{d}\lambda_{\theta} - |B_R|_{\theta} > 0.
		$$
		Hence, 
        $$ \liminf_{\varepsilon \to 0} \lambda_\varepsilon \ge S_{\varepsilon}(p,\nu,\theta,R) - |B_R|_{\theta}>0.$$
	\end{proof}
    \section{Boundedness and Extremals in the critical regime}
    The aim of this section is to prove Theorems ~\ref{teoprinc1} and  \ref{teoprinc2}. We will use a contradiction argument based on blow-up analysis and refined test-functions computations to prove both the boundedness and attainability.

    Let $u_\varepsilon \in X^{1,p}_R(\alpha,\theta) \cap C^1[0,R]$ be the sequence of subcritical maximizer constructed in Lemma~\ref{prop1}. Since $ H_{\nu}(u_\varepsilon) = 1$ and $\nu<\lambda_{\alpha,\theta}$, we have 
	\begin{equation*}
   \Big(1-\frac{\nu}{\lambda_{\alpha,\theta}}\Big)\|u^{\prime}_{\varepsilon}\|^{p}_{L^p_{\alpha}} \le \|u^{\prime}_{\varepsilon}\|^{p}_{L^p_{\alpha}}-\nu \|u_{\varepsilon}\|^p_{L^p_{\theta}}=1.
	\end{equation*}
	Hence $(u_\varepsilon)$ is bounded in $ X^{1,p}_R(\alpha,\theta)$. From \eqref{imersão}, up to a subsequence, we can write 
    \begin{equation}\label{convergencias}
		u_\varepsilon \rightharpoonup u_0 \mbox{ in } X^{1,p}_R(\alpha,\theta), \quad u_\varepsilon \to u_0 \mbox{ in } L^{q}_\theta\; (q>1) \quad \mbox{ and } u_\varepsilon(r) \to u_0(r) \mbox{ a.e. in } (0,R).
	\end{equation}
   Recall that $u_{\varepsilon}$ is a decreasing function. Then we set
	\begin{equation*}
	a_\varepsilon = \max_{[0,R]} u_\varepsilon(r) = u_\varepsilon(0).
	\end{equation*} 
    Our analysis is divided into two cases:

\noindent \hypertarget{Case 1}{(C1)} $u_0 \not\equiv 0$ or $(a_\varepsilon)$ is bounded.

\noindent \hypertarget{Case 2}{(C2)} (blow-up) $u_0 \equiv 0$ and $a_\varepsilon \to +\infty$ as $\varepsilon \to 0$.

We will show that  \hyperlink{Case 1}{(C1)}  yields the desired result, whereas \hyperlink{Case 2}{(C2)}  cannot occur (it is impossible). Firstly, we prove the following:
	\begin{lemma}\label{Theo3}
		Suppose \hyperlink{Case 1}{(C1)} holds. Then $u_0 \in C^1[0,R]$, $H_{\nu}(u_0)\le 1$ and
		\begin{equation}
			S(p,\nu,\theta,R)=\int_0^R e^{\mu_{\alpha,\theta} u_0^\frac{p}{p-1}} \mathrm{d}\lambda_{\theta}.
		\end{equation}
	\end{lemma}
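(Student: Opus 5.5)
The plan is to split \hyperlink{Case 1}{(C1)} into its two subcases and, in each, obtain a uniform $L^q_\theta$ bound ($q>1$) on $e^{\mu_\varepsilon u_\varepsilon^{p/(p-1)}}$. Vitali's theorem then converts the identity $S_\varepsilon(p,\nu,\theta,R)=\int_0^R e^{\mu_\varepsilon u_\varepsilon^{p/(p-1)}}\mathrm{d}\lambda_\theta$ into the desired identity for $u_0$. The inequality $H_\nu(u_0)\le 1$ is immediate in both subcases. By weak lower semicontinuity, $\|u_0'\|^p_{L^p_\alpha}\le\liminf\|u_\varepsilon'\|^p_{L^p_\alpha}$, and by the compact embedding \eqref{imersão}, $\|u_\varepsilon\|_{L^p_\theta}\to\|u_0\|_{L^p_\theta}$. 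Hence $H_\nu(u_0)\le\liminf H_\nu(u_\varepsilon)=1$.

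\emph{Subcase $(a_\varepsilon)$ bounded.} Since $u_\varepsilon$ is decreasing and nonnegative, $0\le u_\varepsilon\le a_\varepsilon\le C$. Therefore $e^{\mu_\varepsilon u_\varepsilon^{p/(p-1)}}\le e^{\mu_{\alpha,\theta}C^{p/(p-1)}}$ uniformly, and dominated convergence together with the a.e. convergence in \eqref{convergencias} gives
\[
S(p,\nu,\theta,R)=\lim_{\varepsilon\to0}S_\varepsilon(p,\nu,\theta,R)=\int_0^R e^{\mu_{\alpha,\theta}u_0^{p/(p-1)}}\mathrm{d}\lambda_\theta ,
\]
where the first equality is Lemma~\ref{prop1}.

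\emph{Subcase $u_0\not\equiv0$.} Here I apply Lemma~\ref{improvement lions} to $u_\varepsilon$, which satisfies $H_\nu(u_\varepsilon)=1$ and $u_\varepsilon\rightharpoonup u_0$. Because $u_0\not\equiv0$ and $\nu<\lambda_{\alpha,\theta}$, we have $H_\nu(u_0)>0$. Thus $P_H(u_0)>1$ (it is $+\infty$ if $H_\nu(u_0)=1$). Choosing $1<q<P_H(u_0)$ and using $\mu_\varepsilon\le\mu_{\alpha,\theta}$, Lemma~\ref{improvement lions} gives
\[
\limsup_{\varepsilon\to0}\int_0^R e^{q\mu_\varepsilon u_\varepsilon^{p/(p-1)}}\mathrm{d}\lambda_\theta<\infty .
\]
Then Vitali's theorem (uniform integrability plus a.e. convergence on a finite measure space) yields the same identity for $S(p,\nu,\theta,R)$. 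In fact, in this subcase the $L^q_\theta$ bound also implies that $(a_\varepsilon)$ is bounded. The argument is elliptic: from \eqref{defuprime} the right-hand side is bounded in $L^{q}_\theta$, provided $1/\lambda_\varepsilon$ stays bounded, which \eqref{lambda-from below} guarantees. A radial estimate then bounds $|u_\varepsilon'|$ near $0$, as below.

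\emph{Regularity of $u_0$.} This is the main step. I pass to the limit in the integral representation \eqref{defuprime}. By the previous paragraphs the nonlinear term $f_\varepsilon=\lambda_\varepsilon^{-1}e^{\mu_\varepsilon u_\varepsilon^{p/(p-1)}}u_\varepsilon^{1/(p-1)}+\nu u_\varepsilon^{p-1}$ is bounded in $L^{s}_\theta$ for some $s>1$; in the bounded subcase it is even bounded in $L^\infty$. Here $\lambda_\varepsilon\to\lambda_0>0$ along a subsequence: $\lambda_\varepsilon$ is bounded above by the same uniform integrability and below by \eqref{lambda-from below}. So $f_\varepsilon\to f_0$ in $L^1_\theta$, and $u_0$ solves the limiting Euler--Lagrange equation with the formula \eqref{defuprime} for $u_0'$. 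To get continuity of $u_0'$ at $r=0$, I use Hölder's inequality with exponent $s$. It gives
\[
\frac{1}{r^\alpha}\int_0^r f\,\mathrm{d}\lambda_\theta\le C\,r^{-\alpha+(\theta+1)(1-1/s)}\|f\|_{L^s_\theta},
\]
which tends to $0$ provided $(\theta+1)(1-1/s)>\alpha$. If $s$ is too small for this, I bootstrap: boundedness of $u_0'$ away from trouble gives $u_0$ bounded, so $f_0\in L^\infty$. Then L'Hospital's rule as in Lemma~\ref{prop1}, using $\theta\ge\alpha$, gives $u_0'(0)=0$ and $u_0\in C^1[0,R]$. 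Alternatively, one can invoke \eqref{lemma-chave} directly for $u_0$, arguing as in \cite[Lemma~7]{CV}.
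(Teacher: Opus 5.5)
Your proof is correct. For $H_\nu(u_0)\le 1$ and for the identity for $S(p,\nu,\theta,R)$ you follow the paper: Lemma~\ref{improvement lions} plus Vitali when $u_0\not\equiv 0$, and a uniform bound when $(a_\varepsilon)$ is bounded. You also supply the justification $P_H(u_0)>1$, which the paper leaves implicit.

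The genuine difference is in the regularity of $u_0$. The paper invokes the Lagrange multiplier theorem for $u_0$ as a maximizer of the critical problem to obtain \eqref{Lagrangeint}, and then repeats the argument of Lemma~\ref{prop1} via \eqref{lemma-chave} and L'Hospital. You instead pass to the limit in the subcritical representation \eqref{defuprime}. You use the uniform integrability already obtained to get $\lambda_\varepsilon\to\lambda_0\in(0,\infty)$ (together with \eqref{lambda-from below}) and $f_\varepsilon\to f_0$ in $L^1_\theta$. This gives locally uniform convergence of $u_\varepsilon'$ on $(0,R]$.

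Your route avoids two ingredients the paper's step uses tacitly: differentiability of the critical functional on $X^{1,p}_R(\alpha,\theta)$, and the fact that the constraint is active at $u_0$. Your H\"older estimate $|u_\varepsilon'(r)|\le C\,r^{-1+(\theta+1)(1-1/s)/(p-1)}$ also shows that $(a_\varepsilon)$ is bounded throughout (C1). In particular $0\le u_0\le \sup_\varepsilon a_\varepsilon$, so $f_0\in L^\infty$ and $u_0'(0)=0$ follow at once; your loosely worded bootstrap is unnecessary.

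The paper's route, once those details are filled in, exhibits $u_0$ as a constrained critical point with $H_\nu(u_0)=1$, which is what Theorem~\ref{teoprinc2} asserts. Your argument does not produce that equality and would need the scaling argument of Lemma~\ref{prop1} for it, though it is not required for the present lemma.
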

	\begin{proof}
		From \eqref{convergencias}, we have $H_{\nu}(u_0)\le 1$. If $u_0 \not \equiv 0 $, then by Lemma~\ref{improvement lions} there exists $1<q<P_{H}(u_0)$ such that	
		\begin{equation}\label{U-integrably}
		    \limsup_{\epsilon\to0}\int_0^R e^{q\mu_\varepsilon u_\varepsilon^\frac{p}{p-1}} \mathrm{d}\lambda_{\theta}<\infty.
		\end{equation}
		By \eqref{convergencias}, we have  $\exp({\mu_\varepsilon u_\varepsilon^\frac{p}{p-1}}) \to \exp({\mu_{\alpha,\theta} u_0^\frac{p}{p-1}})$ a.e. in $(0,R)$. It follows from Vitali's convergence theorem 
		\begin{equation}\label{L-resultado}
		   S(p,\nu,\theta,R)= \lim_{\varepsilon \to 0}S_{\varepsilon}(p,\nu,\theta,R)=\lim_{\varepsilon \to 0}\int_0^R e^{\mu_\varepsilon u_\varepsilon^\frac{p}{p-1}} \mathrm{d}\lambda_{\theta}=\int_0^R e^{\mu_{\alpha,\theta} u_0^\frac{p}{p-1}} \mathrm{d}\lambda_{\theta},
		\end{equation}
        where we also have used Lemma~\ref{prop1}.        
	On the other hand, if $(a_\varepsilon)$ is bounded, then we have $|u_{\varepsilon}|\le a_\varepsilon \le c$ for any $\varepsilon>0$. Hence, \eqref{U-integrably} holds for any $q>1$. So, we can use the Vitali's convergence theorem  to get \eqref{L-resultado}. Finally, 
    by using Lagrange multipliers, we have 
		\begin{equation}\label{Lagrangeint}
			\int_0^R|u_0^\prime|^{p-2}u_0^\prime v^\prime \mathrm{d}\lambda_{\alpha}=\int_0^R\frac{1}{\lambda_0}e^{\mu_{\alpha\theta}u_0^\frac{p}{p-1}}u_0^\frac{1}{p-1}\mathrm{d}\lambda_{\theta}+\nu \int_0^Ru_0^{p-1}v\mathrm{d}\lambda_{\theta},\;\;\mbox{for all}\;\; v\in X^{1,p}_R(\alpha,\theta).
		\end{equation}
	Hence, by using the same argument as in Lemma~\ref{prop1}, it follows that $u_0 \in C^1[0,R]$. 
\end{proof}

By Lemma~\ref{Theo3}, we only need to analyze   \hyperlink{Case 2}{(C2)}. Hereafter we shall assume the following:
\begin{equation}\label{Ablow-up}
u_0\equiv0\;\;\mbox{and}\;\; a_{\varepsilon}\to +\infty, \;\; \mbox{as}\;\; \varepsilon\to 0.
\end{equation}
\begin{lemma}\label{concentracao} If  $u_0\equiv 0$, then $(u_{\varepsilon})$ is a
concentrating sequence at  the origin, that is, 
$$
H_\nu(u_{\varepsilon})= 1,\quad
u_{\varepsilon}\rightharpoonup 0\quad\mbox{in}\quad X^{1,p}_R(\alpha,\theta),\quad\mbox{and}\quad
\lim_{\varepsilon\to 0}\int_{r_0}^{R}|u^{\prime}_{\varepsilon}|^{p}\,\mathrm{d}\lambda_{\alpha}=0,\;\forall
r_0>0.
$$
\end{lemma}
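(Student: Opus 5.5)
The first two properties are immediate. Lemma~\ref{prop1} gives $H_\nu(u_\varepsilon)=1$, and \eqref{convergencias} with $u_0\equiv0$ gives $u_\varepsilon\rightharpoonup 0$ in $X^{1,p}_R(\alpha,\theta)$. The compact embedding \eqref{imersão} then gives $\|u_\varepsilon\|_{L^p_\theta}\to0$. Since $\|u_\varepsilon'\|^p_{L^p_\alpha}=1+\nu\|u_\varepsilon\|^p_{L^p_\theta}$, it follows that
$$\|u_\varepsilon'\|^p_{L^p_\alpha}\to 1.$$
What remains is the concentration of the gradient, namely $\int_{r_0}^R|u_\varepsilon'|^p\,\mathrm{d}\lambda_\alpha\to0$ for every $r_0>0$. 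I will argue by contradiction in the style of Lions: if the gradient mass away from the origin does not vanish, the exponential integrals stay bounded in some $L^q_\theta$ with $q>1$. Vitali then forces $S(p,\nu,\theta,R)=|B_R|_\theta$, which is impossible.

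Suppose that for some $r_0>0$, along a subsequence,
$$\int_{r_0}^R|u_\varepsilon'|^p\,\mathrm{d}\lambda_\alpha\ge\delta>0.$$
Then $\int_0^{r_0}|u_\varepsilon'|^p\,\mathrm{d}\lambda_\alpha\le 1-\delta/2$ for small $\varepsilon$. On $[r_0,R]$ the weights are bounded above and below. Since $u_\varepsilon$ is decreasing and vanishes at $R$, the one-dimensional bound $u_\varepsilon(r)\le\int_r^R|u_\varepsilon'|\,\mathrm{d}s$ together with H\"older gives $0\le u_\varepsilon\le C(r_0)$ on $[r_0,R]$.

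On $[0,r_0]$, consider $w_\varepsilon=u_\varepsilon-u_\varepsilon(r_0)$. It vanishes at $r_0$ and satisfies $\|w_\varepsilon'\|^p_{L^p_\alpha(0,r_0)}\le1-\delta/2$. Applying \eqref{TM1} on the interval $(0,r_0)$ to $w_\varepsilon/\|w_\varepsilon'\|_{L^p_\alpha}$ gives uniform bounds on $\int_0^{r_0}e^{q\mu_{\alpha,\theta}w_\varepsilon^{p/(p-1)}}\,\mathrm{d}\lambda_\theta$ for any $q<(1-\delta/2)^{-1/(p-1)}$. The elementary inequality
$$(a+b)^{p/(p-1)}\le(1+\eta)a^{p/(p-1)}+C_\eta b^{p/(p-1)}$$
with $b=u_\varepsilon(r_0)\le C(r_0)$ and $\eta$ small then yields $\sup_\varepsilon\int_0^R e^{q\mu_\varepsilon u_\varepsilon^{p/(p-1)}}\,\mathrm{d}\lambda_\theta<\infty$ for some $q>1$.

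By Vitali and $u_\varepsilon\to0$ a.e., this gives
$$S(p,\nu,\theta,R)=\lim_{\varepsilon\to0}S_\varepsilon(p,\nu,\theta,R)=|B_R|_\theta.$$
This contradicts the strict inequality $S_\varepsilon(p,\nu,\theta,R)>|B_R|_\theta$ established in Lemma~\ref{prop1}, which already follows from testing with any nonzero $u$ satisfying $H_\nu(u)=1$. Letting $\varepsilon\to0$ preserves the strict inequality, since $S\ge\int_0^R e^{\mu_{\alpha,\theta}|u|^{p/(p-1)}}\,\mathrm{d}\lambda_\theta>|B_R|_\theta$ for a fixed such $u$.

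The main obstacle is the splitting step: the truncated function must be admissible for \eqref{TM1} on the smaller interval $(0,r_0)$ with the same constant $\mu_{\alpha,\theta}$. This holds because the sharp constant in \eqref{TM1} does not depend on $R$; only the bound $c$ does. We must also check that the boundedness of $u_\varepsilon(r_0)$ needs only the uniform gradient bound, and it does. Monotonicity of $u_\varepsilon$, obtained from \eqref{defuprime}, simplifies the argument but is not essential.
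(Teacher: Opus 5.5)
Your proof is correct, and its overall architecture matches the paper's: argue by contradiction, get a uniform bound for $\int_0^R e^{q\mu_\varepsilon u_\varepsilon^{p/(p-1)}}\,\mathrm{d}\lambda_\theta$ with some $q>1$, and use Vitali to force $S(p,\nu,\theta,R)=|B_R|_\theta$.

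The one genuine difference is how you localize near the origin.

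\textbf{The paper's localization.} It multiplies by a cutoff $\varphi$ ($\varphi\equiv1$ on $[0,r_0/2]$, $\varphi\equiv0$ on $[r_0,R]$) and applies \eqref{TM1} on the original interval $(0,R)$. To get $\limsup\|(\varphi u_\varepsilon)'\|^p_{L^p_\alpha}<1$ it needs the compact embedding, which kills the error term $\|\varphi' u_\varepsilon\|_{L^p_\alpha}$. The outer region $[r_0/2,R]$ is then handled by the same pointwise logarithmic bound you use.

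\textbf{Your localization.} You subtract the boundary value $u_\varepsilon(r_0)$. This keeps the local energy exactly equal to $\int_0^{r_0}|u_\varepsilon'|^p\,\mathrm{d}\lambda_\alpha$, with no error term. The price is twofold:
\begin{itemize}
\item You must invoke \eqref{TM1} on the smaller interval $(0,r_0)$. This is legitimate: for $\alpha=p-1$ the energy $\|u'\|_{L^p_\alpha}$ is dilation invariant, so $\mu_{\alpha,\theta}$ does not depend on $R$.
\item You lose a factor $1+\eta$ in the exponent from the convexity inequality. This is harmless, because you have room $q<(1-\delta/2)^{-1/(p-1)}$.
\end{itemize}

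Both routes are equally short. Yours avoids compactness in the localization step, while the paper's stays with a single application of \eqref{TM1} on $(0,R)$.

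Two further points in your favour. Your explicit justification that $S(p,\nu,\theta,R)>|B_R|_\theta$ (test with any fixed nonzero admissible $u$) supplies a step the paper only announces. Your contradiction hypothesis, a subsequence with $\int_{r_0}^R|u_\varepsilon'|^p\,\mathrm{d}\lambda_\alpha\ge\delta>0$, is the correct negation. The paper's ``$a<1$'' should read $0<a<1$.
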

\begin{proof}
Recall that $\int_0^R|u_\varepsilon^\prime|^{p}\mathrm{d}\lambda_\alpha=1+\nu\int_0^R|u_\varepsilon|^{p}\mathrm{d}\lambda_{\theta} \to 1$, as $\varepsilon\to 0$. We argue by contradiction. Assume that there exist constants $a<1$ and $r_0>0$ such that
$$
\lim_{\varepsilon\to 0}\int_{r_0}^{R} |u'_\varepsilon|^{p}\,\mathrm{d}\lambda_{\alpha}>a.
$$
Since $\int_0^R|u_\varepsilon^\prime|^{p}\mathrm{d}\lambda_\alpha\to 1$, we have
\begin{equation}\label{1-a}
\lim_{\varepsilon\to 0}\int_{0}^{r_0} |u'_\varepsilon|^{p}\,\mathrm{d}\lambda_{\alpha}< 1-a.
\end{equation}
Let us take an auxiliary function $\varphi \in C^1[0,R]$, $0 \leq \varphi \leq 1$ such that $\varphi\equiv 1$ in $[0,\frac{r_0}{2}]$ and $\varphi\equiv 0$ in $[r_0,R]$. We have
\begin{equation}{\label{limitation}}
    \begin{aligned}
        \|(\varphi u_\varepsilon)^\prime\|_{L^{p}_{\alpha}}\leq \|\varphi u^\prime_\varepsilon\|_{L^{{p}}_\alpha}+\|\varphi^\prime u_\varepsilon\|_{L^{p}_\alpha}
        \leq \Big(\int_0^{r_0}|u^\prime_\varepsilon|^{p}\mathrm{d}\lambda_\alpha\Big)^{\frac{1}{p}}+\|\varphi^\prime u_\varepsilon\|_{L^{p}_\alpha}.
    \end{aligned}
\end{equation}
By the compact embedding \eqref{imersão}, and using \eqref{convergencias}, \eqref{1-a}, and \eqref{limitation}, we obtain
$$
\limsup_{\varepsilon\to 0}\|(\varphi u_\varepsilon)^\prime\|^{p}_{L^{p}_\alpha}<1-a.
$$
We can choose $q>1$ such that
$$
q\|(\varphi u_\varepsilon)^\prime\|_{L^{p}_\alpha}^{\frac{p}{p-1}}<(1-a)^{\frac{1}{p-1}}<1,
$$
for $\varepsilon>0$ small enough.
Hence, by \eqref{TM1}, for $\varepsilon$ small enough.
\begin{equation}\label{U-int-parte1}
    \begin{aligned}
        \int_0^{\frac{r_0}{2}}e^{q\mu_{\varepsilon}|u_\varepsilon|^\frac{p}{p-1}}\mathrm{d}\lambda_{\theta}&=\int_0^{\frac{r_0}{2}}e^{q\mu_{\varepsilon} \|(\varphi u_\varepsilon)^\prime\|_{L^{p}_\alpha}^{\frac{p}{p-1}}\Big|\frac{|\varphi u_\varepsilon|}{\|(\varphi u_\varepsilon)^\prime\|_{L_\alpha^{p}}}\Big|^\frac{p}{p-1}}\mathrm{d}\lambda_{\theta}\\
        &\leq \int_0^R e^{\mu_{\alpha,\theta}\Big|\frac{|\varphi u_\varepsilon|}{\|(\varphi u_\varepsilon)^\prime\|_{L_\alpha^{p}}}\Big|^\frac{p}{p-1}} \mathrm{d}\lambda_{\theta} \leq c_1
    \end{aligned}
\end{equation}
where  $c_1>0$ is independent of $\varepsilon.$

Note that
\begin{eqnarray*}
|u_\varepsilon(r)|\leq \int^R_r|u^\prime_\varepsilon(s)|\mathrm{d} s &=&\int_r^R(\omega_\alpha^{\frac{1}{p}}s^\frac{\alpha}{p}|u_\varepsilon^\prime(s)|)(\omega_\alpha^{-\frac{1}{p}}s^{-\frac{\alpha}{p}}) \mathrm{d} s\\
&\leq& \|u^\prime_\varepsilon\|_{L^{p}_\alpha}\Big(\omega_\alpha^{-\frac{1}{\alpha}}\ln\frac{R}{r}\Big)^\frac{p-1}{p}\\
&=& \|u^\prime_\varepsilon\|_{L^{p}_\alpha}\Big(\frac{\theta+1}{\mu_{\alpha,\theta}}\ln\frac{R}{r}\Big)^\frac{p-1}{p},
\end{eqnarray*}
for all $0<r\le R$.
Using that $\|u_\varepsilon^\prime\|_{L^{p}_\alpha} \to 1$, we obtain
\begin{equation}\label{limita}
\mu_{\alpha,\theta}|u_\varepsilon(r)|^\frac{p}{p-1}\leq 2(\theta+1) \ln \frac{2R}{r_0},\;\;\mbox{for all} \;\; \frac{r_0}{2}\le r\le R
\end{equation}
if $\varepsilon>0$ is sufficiently small. It follows from \eqref{limita} that
\begin{equation}\label{U-int-parte2}
    \int_{\frac{r_0}{2}}^Re^{q \mu_{\varepsilon}|u_\varepsilon|^\frac{p}{p-1}}\mathrm{d}\lambda_{\theta} \leq \Big(\frac{2R}{r_0}\Big)^{2q(\theta+1)}\int_{\frac{r_0}{2}}^R\mathrm{d}\lambda_{\theta}.
\end{equation}
By combining \eqref{U-int-parte1} and \eqref{U-int-parte2}, we are in a position to apply the Vitali convergence theorem to get
$$
S(p,\nu,\theta,R)=\lim_{\epsilon\to 0}S_{\varepsilon}(p,\nu,\theta,R)=\lim_{\epsilon\to 0}\int_0^Re^{\mu_\varepsilon|u_{\varepsilon}|^\frac{p}{p-1}}\mathrm{d}\lambda_{\theta}= \int_0^R \mathrm{d}\lambda_{\theta}=|B_R|_{\theta}
$$
which leads to a contradiction.

\end{proof}
To exclude case \hyperlink{Case 2}{(C2)}, we proceed in two steps. First, we apply a blow-up analysis to show that, under condition \eqref{Ablow-up}, one must have (cf.\ Lemma~\ref{lemacontrad} below)
\begin{equation}\nonumber
	S(p,\nu,\theta,R)\leq |B_R|_\theta + e^{\mu_{\alpha,\theta}A_{0}+ \Psi(p)+\gamma}|B_1|_\theta.
\end{equation}
On the other hand, by testing with a suitable function (cf.\ Lemma~\ref{Lemma-TF} below), we show that
\begin{equation}\nonumber
	S(p,\nu,\theta,R)> |B_R|_\theta + e^{\mu_{\alpha,\theta}A_{0}+ \Psi(p)+\gamma}|B_1|_\theta.
\end{equation}
This contradiction implies that \hyperlink{Case 2}{(C2)} cannot occur.
\subsection{Blow-up analysis}
We shall analyze the behavior of the sequence $(u_{\varepsilon})$ given by Lemma~\ref{prop1} around the blow-up point $ r=0$. To this end, let us define the auxiliary functions
	\begin{equation}\label{phipsi}
		\left\{\begin{aligned}
	&\varphi_\varepsilon(r)=\frac{u_\varepsilon(r_\varepsilon r)}{a_\varepsilon}\\
		&	\psi_\varepsilon(r)=a_\varepsilon^{\frac{1}{p-1}}(u_\varepsilon(r_\varepsilon r)-a_\varepsilon)
		\end{aligned},\right. \;\;\mbox{for}\;\;\ 0<r\le \frac{R}{r_{\varepsilon}}
	\end{equation}
	where 
	\begin{equation}\label{rE}
		r_\varepsilon^{\theta+1}= \frac{\lambda_\varepsilon}{a_\varepsilon^{\frac{p}{p-1}}e^{\mu_\varepsilon a_\varepsilon^{\frac{p}{p-1}}}}.
	\end{equation}
	\begin{lemma}\label{limr} Let $\eta<\mu_{\alpha,\theta}$. Then
		the sequence $(r_\varepsilon)$ satisfies 
         \begin{equation*}
     r_\varepsilon^{\theta+1}a_\varepsilon^\frac{p}{p-1} e^{\eta a_\varepsilon^\frac{p}{p-1}} \to 0, \;\; \mbox{as}\;\;\varepsilon\to 0.
 \end{equation*}
In particular,  $r_\varepsilon^{\theta+1} \to 0$ when $\varepsilon \to 0$.
	\end{lemma}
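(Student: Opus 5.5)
By \eqref{rE}, the quantity to control is
\[
r_\varepsilon^{\theta+1}a_\varepsilon^{\frac{p}{p-1}}e^{\eta a_\varepsilon^{\frac{p}{p-1}}}=\lambda_\varepsilon\, e^{(\eta-\mu_\varepsilon)a_\varepsilon^{\frac{p}{p-1}}}.
\]
Since $a_\varepsilon\to\infty$ under \eqref{Ablow-up} and $\eta<\mu_{\alpha,\theta}$, the exponential factor decays, but $\lambda_\varepsilon$ may also blow up. So the plan is to show that $\lambda_\varepsilon$ grows strictly slower than $e^{\delta a_\varepsilon^{p/(p-1)}}$ for a suitable $\delta>0$. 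We will split $\lambda_\varepsilon$ at a level set of $u_\varepsilon$ and use monotonicity of $u_\varepsilon$, so that $u_\varepsilon\le a_\varepsilon$.

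Fix $\delta>0$ with $\eta+\delta<\mu_{\alpha,\theta}$; for $\varepsilon$ small we then have $\eta+\delta<\mu_\varepsilon$. Write
\[
\lambda_\varepsilon=\int_0^R e^{\mu_\varepsilon u_\varepsilon^{\frac{p}{p-1}}}u_\varepsilon^{\frac{p}{p-1}}\,\mathrm{d}\lambda_\theta
= \int_0^R e^{(\mu_\varepsilon-\delta) u_\varepsilon^{\frac{p}{p-1}}}\,\bigl(e^{\delta u_\varepsilon^{\frac{p}{p-1}}}u_\varepsilon^{\frac{p}{p-1}}\bigr)\,\mathrm{d}\lambda_\theta .
\]
Bound the second factor pointwise by its value at $a_\varepsilon$, namely $e^{\delta a_\varepsilon^{p/(p-1)}}a_\varepsilon^{p/(p-1)}$. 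Then control $\int_0^R e^{(\mu_\varepsilon-\delta)u_\varepsilon^{p/(p-1)}}\mathrm{d}\lambda_\theta$ uniformly in $\varepsilon$ by Lemma~\ref{improvement lions} with $u=0$. Since $H_\nu(u_\varepsilon)=1$ and $u_\varepsilon\rightharpoonup0$, that lemma bounds the integral for $q=(\mu_{\alpha,\theta}-\delta)/\mu_{\alpha,\theta}<1$; alternatively one can use \eqref{TM1} applied to $u_\varepsilon/\|u_\varepsilon'\|_{L^p_\alpha}$, since $\|u_\varepsilon'\|_{L^p_\alpha}\to1$. This gives
\[
\lambda_\varepsilon\le C\,a_\varepsilon^{\frac{p}{p-1}}e^{\delta a_\varepsilon^{\frac{p}{p-1}}}.
\]
Consequently
\[
r_\varepsilon^{\theta+1}a_\varepsilon^{\frac{p}{p-1}}e^{\eta a_\varepsilon^{\frac{p}{p-1}}}\le C\,a_\varepsilon^{\frac{p}{p-1}}e^{(\eta+\delta-\mu_\varepsilon)a_\varepsilon^{\frac{p}{p-1}}},
\]
which tends to $0$ because $\mu_\varepsilon-\eta-\delta$ stays bounded below by a positive constant and $a_\varepsilon\to\infty$. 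The main subtlety is exactly this step: the uniform bound must hold for an exponent strictly below the critical one, which needs the gradient normalization $\|u_\varepsilon'\|_{L^p_\alpha}\to1$ from Lemma~\ref{concentracao} (that is, $u_0\equiv0$). This is where \eqref{Ablow-up} is used.

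The particular case follows by taking $\eta=0$: then $r_\varepsilon^{\theta+1}a_\varepsilon^{p/(p-1)}\to0$, and since $a_\varepsilon\to\infty$, also $r_\varepsilon^{\theta+1}\to0$.
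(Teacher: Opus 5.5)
Your proof is correct, but it splits the exponent differently from the paper. The paper writes the quantity as $\int_0^R u_\varepsilon^{p/(p-1)}e^{(\mu_\varepsilon-\eta)(u_\varepsilon^{p/(p-1)}-a_\varepsilon^{p/(p-1)})+\eta u_\varepsilon^{p/(p-1)}}\,\mathrm{d}\lambda_\theta$. It drops the nonpositive first part of the exponent using $u_\varepsilon\le a_\varepsilon$, which leaves $\int_0^R u_\varepsilon^{p/(p-1)}e^{\eta u_\varepsilon^{p/(p-1)}}\,\mathrm{d}\lambda_\theta$. This tends to zero by H\"older, a uniform Trudinger--Moser bound for $e^{\bar q\eta u_\varepsilon^{p/(p-1)}}$, and the strong convergence $u_\varepsilon\to0$ in $L^q_\theta$. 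So in the paper the decay comes from the vanishing weak limit $u_0\equiv0$, and $a_\varepsilon\to\infty$ is used only for the ``in particular'' part.

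You instead bound only a small piece $e^{\delta u_\varepsilon^{p/(p-1)}}u_\varepsilon^{p/(p-1)}$ of the integrand by its maximum at $a_\varepsilon$. You keep a uniformly bounded subcritical integral and get the decay from $a_\varepsilon\to\infty$ through $a_\varepsilon^{p/(p-1)}e^{-\kappa a_\varepsilon^{p/(p-1)}}\to0$.

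Your route avoids H\"older and the $L^q_\theta$ convergence. As a by-product it gives the a priori bound $\lambda_\varepsilon\le C_\delta\, a_\varepsilon^{p/(p-1)}e^{\delta a_\varepsilon^{p/(p-1)}}$ for every small $\delta>0$, and an explicit exponential rate. The paper's route, in contrast, does not need $a_\varepsilon\to\infty$ for the main limit.

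Both arguments rest on the same input: $\|u_\varepsilon'\|_{L^p_\alpha}\to1$, so that exponents strictly below $\mu_{\alpha,\theta}$ give uniform bounds via \eqref{TM1} or Lemma~\ref{improvement lions}. This follows directly from $H_\nu(u_\varepsilon)=1$ and $u_\varepsilon\to0$ in $L^p_\theta$, rather than from the concentration statement of Lemma~\ref{concentracao} itself.
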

	
	\begin{proof}
		For $\varepsilon>0$ sufficiently small, it follows that $\eta<\mu_\varepsilon$ which implies  
$$
(\mu_\varepsilon-\eta)\Big(u_\varepsilon^{\frac{p}{p-1}} - a_\varepsilon^{\frac{p}{p-1}}\Big) \leq 0.
$$  
So, from the definition of $r_\varepsilon$, we can write

\begin{equation*}
\begin{aligned}
r_\varepsilon^{\theta+1}a_\varepsilon^\frac{p}{p-1} e^{\eta a_\varepsilon^\frac{p}{p-1}}&=\int_0^Re^{\mu_\varepsilon u_\varepsilon^\frac{p}{p-1}}u_\varepsilon^\frac{p}{p-1}e^{{\eta a_\varepsilon^\frac{p}{p-1}}-\mu_\varepsilon a_\varepsilon^\frac{p}{p-1}}\mathrm{d}\lambda_{\theta} \\
&=\int_0^R u_\varepsilon^\frac{p}{p-1}e^{(\mu_\varepsilon-\eta)(u_\varepsilon^\frac{p}{p-1}-a_\varepsilon^\frac{p}{p-1})+\eta u_\varepsilon^{\frac{p}{p-1}}}\mathrm{d}\lambda_{\theta}\\
&\leq  \int_0^R u_\varepsilon^\frac{p}{p-1}e^{\eta u_\varepsilon^{\frac{p}{p-1}}} \mathrm{d}\lambda_{\theta}.
\end{aligned}
\end{equation*}
		Let $\bar{q}>1$ be chosen such that $\bar{q} \eta \leq \mu_{\alpha,\theta}$. By H\"{o}lder inequality and the Trudinger-Moser inequality in \cite{JF2014}, we have 
		\begin{align*}
		    \int_0^R u_\varepsilon^\frac{p}{p-1}e^{\eta u_\varepsilon^{\frac{p}{p-1}}} \mathrm{d}\lambda_{\theta} & \leq \Big(\int_0^R e^{\bar{q}\eta u_\varepsilon^{\frac{p}{p-1}}} \mathrm{d}\lambda_{\theta} \Big)^\frac{1}{\bar{q}} \Big(\int_0^R u_\varepsilon^{\frac{\bar{q}p}{(p-1)(\bar{q}-1)}} \mathrm{d}\lambda_{\theta} \Big)^{\frac{\bar{q}-1}{\bar{q}}}\\
            &\le c\Big(\int_0^R u_\varepsilon^{\frac{\bar{q}p}{(p-1)(\bar{q}-1)}} \mathrm{d}\lambda_{\theta} \Big)^{\frac{\bar{q}-1}{\bar{q}}}\to 0,\;\; \mbox{as}\;\; \varepsilon\to 0
		\end{align*}
where we use that $u_\varepsilon \to u_0 \equiv 0$ in $L^q_{\theta}$ for every $q>1$ (cf. \eqref{convergencias}). Thus,
 \begin{equation*}
     r_\varepsilon^{\theta+1}a_\varepsilon^\frac{p}{p-1} e^{\eta a_\varepsilon^\frac{p}{p-1}} \to 0,\;\; \mbox{as}\;\; \varepsilon\to 0
 \end{equation*}
 as desired.
	\end{proof}
    \begin{remark}
        Since $r_\varepsilon \to 0$ as $\varepsilon\to 0$, for any $s>0$, there exists $\varepsilon > 0$ such that $s < \frac{R}{r_\varepsilon}$ for sufficiently small $\varepsilon$. 
    \end{remark}
 We now investigate the limiting behavior of $\psi_\varepsilon$ and $\varphi_\varepsilon$ given in \eqref{phipsi}, as $\varepsilon \to 0$.

	\begin{lemma}\label{to1}
		We have $\varphi_\varepsilon \to 1$ in $C^1_{loc}[0,\infty)$.
	\end{lemma}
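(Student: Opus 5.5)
We will prove that $\varphi_\varepsilon\to 1$ in $C^1_{loc}[0,\infty)$ by writing down the rescaled integral equation and showing that its right-hand side vanishes uniformly on compacts. Since $0\le u_\varepsilon\le a_\varepsilon$ and $u_\varepsilon$ is decreasing, we have $0\le\varphi_\varepsilon\le 1$ and $\varphi_\varepsilon(0)=1$.

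First, we rescale \eqref{defuprime}. Substituting $r\mapsto r_\varepsilon r$, with $s=r_\varepsilon t$, and using $\alpha=p-1$, we get
\begin{equation*}
-\varphi_\varepsilon'(r)=\frac{r_\varepsilon}{a_\varepsilon}\Big(\frac{r_\varepsilon^{\theta+1}}{\omega_\alpha r_\varepsilon^{\alpha}r^\alpha}\int_0^r\Big(\frac{1}{\lambda_\varepsilon}e^{\mu_\varepsilon u_\varepsilon^{\frac{p}{p-1}}}u_\varepsilon^{\frac{1}{p-1}}+\nu u_\varepsilon^{p-1}\Big)(r_\varepsilon t)\,\omega_\theta t^\theta\,\mathrm{d}t\Big)^{\frac{1}{p-1}}.
\end{equation*}
Since $r_\varepsilon/r_\varepsilon^{\alpha/(p-1)}=1$, this simplifies to
\begin{equation*}
|\varphi_\varepsilon'(r)|^{p-1}=\frac{1}{a_\varepsilon^{p-1}\omega_\alpha r^\alpha}\int_0^r r_\varepsilon^{\theta+1}\Big(\frac{1}{\lambda_\varepsilon}e^{\mu_\varepsilon u_\varepsilon^{\frac{p}{p-1}}}u_\varepsilon^{\frac{1}{p-1}}+\nu u_\varepsilon^{p-1}\Big)(r_\varepsilon t)\,\mathrm{d}\lambda_\theta(t).
\end{equation*}

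Second, we bound the two terms of the integrand using $u_\varepsilon\le a_\varepsilon$ and the definition \eqref{rE}.
\begin{itemize}
\item[(i)] For the exponential term, $r_\varepsilon^{\theta+1}\lambda_\varepsilon^{-1}e^{\mu_\varepsilon u_\varepsilon^{p/(p-1)}}u_\varepsilon^{1/(p-1)}\le a_\varepsilon^{-p/(p-1)}a_\varepsilon^{1/(p-1)}=a_\varepsilon^{-1}$.
\item[(ii)] For the linear term, $r_\varepsilon^{\theta+1}\nu u_\varepsilon^{p-1}\le\nu r_\varepsilon^{\theta+1}a_\varepsilon^{p-1}$.
\end{itemize}
Hence, for $r\le L$,
\begin{equation*}
|\varphi_\varepsilon'(r)|^{p-1}\le \frac{\omega_\theta}{\omega_\alpha(\theta+1)}\,r^{\theta+1-\alpha}\Big(a_\varepsilon^{-p}+\nu r_\varepsilon^{\theta+1}\Big).
\end{equation*}
Since $\theta\ge\alpha$, the factor $r^{\theta+1-\alpha}$ is bounded on $[0,L]$. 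Moreover, $a_\varepsilon\to\infty$ and $r_\varepsilon\to 0$ by Lemma~\ref{limr}. Therefore $\varphi_\varepsilon'\to 0$ uniformly on $[0,L]$.

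Third, we integrate: $|\varphi_\varepsilon(r)-1|\le L\sup_{[0,L]}|\varphi_\varepsilon'|\to 0$. This gives $\varphi_\varepsilon\to 1$ in $C^1[0,L]$ for every $L$, which is the claim. The remark following Lemma~\ref{limr} guarantees that $[0,L]\subset[0,R/r_\varepsilon]$ for small $\varepsilon$.

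The only delicate point is the bookkeeping of the powers of $r_\varepsilon$. We must check that the scaling exponents match exactly because $\alpha=p-1$, and that the exponential term is controlled pointwise by the normalization \eqref{rE}. No compactness argument is needed.
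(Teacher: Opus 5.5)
Your proof is correct and follows essentially the same route as the paper. You rescale to obtain the identity for $|\varphi_\varepsilon'|^{p-1}$ (the paper's \eqref{vline}, which you derive directly from \eqref{defuprime}). You bound it using $u_\varepsilon\le a_\varepsilon$, the normalization \eqref{rE} and $\theta\ge\alpha$, then integrate from $\varphi_\varepsilon(0)=1$. Your explicit estimate $\frac{\omega_\theta}{\omega_\alpha(\theta+1)}r^{\theta+1-\alpha}\bigl(a_\varepsilon^{-p}+\nu r_\varepsilon^{\theta+1}\bigr)$ simply makes precise the step the paper states in one line.
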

	\begin{proof}
		By \eqref{Lagrange} we get
		\begin{equation*}
			\int_0^{\frac{R}{r_\varepsilon}}|\varphi_\varepsilon^\prime|^{p-2}\varphi_\varepsilon^\prime z^\prime\mathrm{d}\lambda_\alpha=\frac{1}{a_\varepsilon^{p}}\int_0^{\frac{R}{r_\varepsilon}}e^{\mu_\varepsilon \big(u_\varepsilon^\frac{p}{p-1}(r_{\varepsilon }s)-a_\varepsilon^{\frac{p}{p-1}}\big)}\varphi_\varepsilon^\frac{1}{p-1}z\mathrm{d}\lambda_{\theta}+r_\varepsilon^{\theta+1}\nu  \int_0^{\frac{R}{r_\varepsilon}}\varphi_\varepsilon^{p-1}z\mathrm{d}\lambda_{\theta}
		\end{equation*}
		for all $z(s)=v(sr_\varepsilon) $, with $v \in X^{1,p}_R(\alpha,\theta)$. By choosing the test function $v_\rho$ as in \eqref{teste}, letting $\rho \to 0$, we have

		\begin{equation}\label{vline}
			\omega_\alpha |\varphi^\prime_\varepsilon(r)|^{p-1}
            =\frac{1}{r^\alpha}\frac{1}{a_\varepsilon^{p}}\int_0^{r}e^{\mu_\varepsilon \big(u_\varepsilon^\frac{p}{p-1}(r_{\varepsilon}s)-a_\varepsilon^{\frac{p}{p-1}}\big)}\varphi_\varepsilon^\frac{1}{p-1}\mathrm{d}\lambda_{\theta}
            +r_\varepsilon^{\theta+1}\nu \frac{1}{r^\alpha} \int_0^{r}\varphi_\varepsilon^{p-1}\mathrm{d}\lambda_{\theta}.
		\end{equation}
		Recall that $\theta \geq \alpha$,  $u_\varepsilon^{\frac{p}{p-1}} - a_\varepsilon^{\frac{p}{p-1}} \leq 0$ and $\varphi_\varepsilon \leq 1$. Since $a_\varepsilon \to \infty$ and $r_\varepsilon \to 0$, the equation \eqref{vline} yields $\varphi_\varepsilon^{\prime} \to 0$ uniformly on $[0, r_0]$. Given that $\varphi_\varepsilon(0) = 1$, we deduce that $\varphi_\varepsilon \to 1$ in $C^1_{\text{loc}}[0, \infty)$.

	\end{proof}
	
	\begin{lemma}\label{limpsi}
		We have  $\psi_\varepsilon \to \psi$ in $C^1_{loc}[0,\infty)$, where 
		$$
		\psi(r)=-\frac{p-1}{\mu_{\alpha,\theta}} \ln \Big(1+c_0r^{\frac{\theta+1}{p-1}}\Big),\;\;\mbox{with}\;\; c_0=\Big(\frac{\omega_{\theta}}{\theta+1}\Big)^{\frac{1}{\alpha}}.
		$$
	\end{lemma}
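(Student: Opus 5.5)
We start from the integral representation \eqref{defuprime} of $u_\varepsilon^\prime$, rewrite it in the blow-up variables, pass to the limit, and identify the limit profile by an explicit computation plus uniqueness. Since $\psi_\varepsilon^\prime(r)=a_\varepsilon^{\frac{1}{p-1}}r_\varepsilon u_\varepsilon^\prime(r_\varepsilon r)$, substituting $s=r_\varepsilon t$ in \eqref{defuprime} multiplies the measure by $r_\varepsilon^{\theta+1}$. The factor $r_\varepsilon^{p-1}$ cancels against $r_\varepsilon^{\alpha}$ because $\alpha=p-1$. Using the definition \eqref{rE} of $r_\varepsilon$, I expect to obtain, for $0<r\le R/r_\varepsilon$,
\begin{equation*}
\omega_\alpha|\psi_\varepsilon^\prime(r)|^{p-1}=\frac{1}{r^{\alpha}}\int_0^r \Big(\varphi_\varepsilon^{\frac{1}{p-1}}e^{\mu_\varepsilon\big(u_\varepsilon^{\frac{p}{p-1}}(r_\varepsilon t)-a_\varepsilon^{\frac{p}{p-1}}\big)}+\nu\, a_\varepsilon^{p}r_\varepsilon^{\theta+1}\varphi_\varepsilon^{p-1}\Big)\mathrm{d}\lambda_\theta(t),
\end{equation*}
with $\psi_\varepsilon(0)=0$ and $\psi_\varepsilon$ nonincreasing.

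\textbf{Step 1: uniform bounds.} The exponential factor is $\le 1$, $\varphi_\varepsilon\le1$, and $a_\varepsilon^p r_\varepsilon^{\theta+1}\to0$. The last fact follows from Lemma~\ref{limr}, since $a_\varepsilon^{p}\le C a_\varepsilon^{\frac{p}{p-1}}e^{\eta a_\varepsilon^{\frac{p}{p-1}}}$ for any fixed $0<\eta<\mu_{\alpha,\theta}$. Hence $|\psi_\varepsilon^\prime(r)|^{p-1}\le C r^{\theta+1-\alpha}$ on compact sets, and this is bounded because $\theta\ge\alpha$. Therefore $(\psi_\varepsilon)$ is bounded in $C^1[0,L]$ for each $L>0$. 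Arzel\`a--Ascoli gives a subsequence with $\psi_\varepsilon\to\psi$ in $C^0_{loc}[0,\infty)$.

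\textbf{Step 2: passing to the limit.} Write $u_\varepsilon(r_\varepsilon t)=a_\varepsilon+a_\varepsilon^{-\frac{1}{p-1}}\psi_\varepsilon(t)$, so that $\varphi_\varepsilon=1+\psi_\varepsilon a_\varepsilon^{-\frac{p}{p-1}}$. A Taylor expansion then gives
\[
\mu_\varepsilon\big(u_\varepsilon^{\frac{p}{p-1}}-a_\varepsilon^{\frac{p}{p-1}}\big)=\mu_\varepsilon a_\varepsilon^{\frac{p}{p-1}}\big((1+\psi_\varepsilon a_\varepsilon^{-\frac{p}{p-1}})^{\frac{p}{p-1}}-1\big)\to \tfrac{p}{p-1}\mu_{\alpha,\theta}\psi
\]
locally uniformly. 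Together with $\varphi_\varepsilon\to1$ (Lemma~\ref{to1}), dominated convergence in the identity above yields local uniform convergence of $|\psi_\varepsilon^\prime|^{p-1}$. Since $\psi_\varepsilon^\prime\le0$, this gives $C^1_{loc}$ convergence. The limit satisfies
\begin{equation*}
\omega_\alpha|\psi^\prime(r)|^{p-1}=\frac{1}{r^{\alpha}}\int_0^r e^{\frac{p}{p-1}\mu_{\alpha,\theta}\psi}\,\mathrm{d}\lambda_\theta,\qquad \psi(0)=0,\quad \psi^\prime\le0.
\end{equation*}

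\textbf{Step 3: identification.} Set $k=\frac{\theta+1}{p-1}$ and let $\psi$ be the function in the statement. Then $e^{\frac{p}{p-1}\mu_{\alpha,\theta}\psi}=(1+c_0t^k)^{-p}$. The identity
\[
\frac{d}{dt}\Big[\frac{t^{\theta+1}}{(1+c_0t^k)^{p-1}}\Big]=\frac{(\theta+1)t^{\theta}}{(1+c_0t^k)^{p}}
\]
shows that the right-hand side of the limit equation equals $\frac{\omega_\theta}{\theta+1}\,\frac{r^{\theta+1-\alpha}}{(1+c_0r^k)^{p-1}}$. On the other hand, using $\mu_{\alpha,\theta}=(\theta+1)\omega_\alpha^{1/\alpha}$, the left-hand side equals $c_0^{\alpha}\,\frac{r^{\theta+1-\alpha}}{(1+c_0r^k)^{p-1}}$. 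The two agree exactly because $c_0^\alpha=\omega_\theta/(\theta+1)$.

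\textbf{Main obstacle: uniqueness.} It remains to show that the limit problem has only this solution, so that the whole family converges. Away from $0$ the equation is equivalent to the first-order system in $(\psi,\Phi)$ with $\Phi(r)=\int_0^r e^{\frac{p}{p-1}\mu_{\alpha,\theta}\psi}\,\mathrm{d}\lambda_\theta$ and $\psi^\prime=-(\Phi/(\omega_\alpha r^\alpha))^{1/(p-1)}$. Here $\Phi>0$ for $r>0$, so the $(p-1)$-th root is locally Lipschitz there, and standard ODE uniqueness applies. The delicate point is near $r=0$, where the root is degenerate because $p\ge2$ and the weight is singular. There I would first try a contraction argument for the map
\[
T\psi(r)=-\int_0^r\Big(\frac{1}{\omega_\alpha s^\alpha}\int_0^s e^{\frac{p}{p-1}\mu_{\alpha,\theta}\psi}\,\mathrm{d}\lambda_\theta\Big)^{\frac{1}{p-1}}\mathrm{d}s
\]
on $C[0,\delta]$. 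The inner integral is bounded below by $c\,s^{\theta+1}$, which keeps the root away from its degenerate point in relative terms. This should give a Lipschitz constant of order $\delta^{k}$ for $T$, hence uniqueness on $[0,\delta]$ for small $\delta$; the system argument then propagates uniqueness to all of $[0,\infty)$.
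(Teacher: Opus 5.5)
Your proposal is correct and follows the same route as the paper. Both arguments use the rescaled integral identity for $\psi_\varepsilon'$, uniform $C^1$ bounds from $\theta\ge\alpha$ and Lemma~\ref{limr}, Arzel\`a--Ascoli, and the Taylor expansion with dominated convergence to reach the limit equation; both then identify $\psi$ by uniqueness. Where the paper only asserts ``uniqueness of solutions,'' you also check that the stated profile solves the limit equation (which is what forces $c_0^{\alpha}=\omega_\theta/(\theta+1)$) and sketch a valid uniqueness argument: an $O(\delta^{k})$ contraction near $0$, then Picard--Lindel\"of for $r>0$. Your coefficient $\nu a_\varepsilon^{p}r_\varepsilon^{\theta+1}$ is the correct one; the paper's $a_\varepsilon r_\varepsilon^{\theta+1}\nu$ is a slip, harmless since both tend to $0$.
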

	\begin{proof}
		Arguing as in \eqref{vline}, we can write
		\begin{equation}\label{wline}
			\omega_\alpha |\psi^\prime_\varepsilon(r)|^{p-1}=\frac{1}{r^\alpha}\int_0^re^{\mu_\varepsilon \big(u_\varepsilon^\frac{p}{p-1}(r_{\varepsilon}s)-a_\varepsilon^{\frac{p}{p-1}}\big)}\varphi_\varepsilon^\frac{1}{p-1}\mathrm{d}\lambda_{\theta}
            +a_\varepsilon r_\varepsilon^{\theta+1}\nu\frac{1}{r^\alpha}\int_{0}^r\varphi_\varepsilon^{p-1}\mathrm{d}\lambda_{\theta}
		\end{equation}
        and thus 
        \begin{equation}\label{wprime-line}
			-\psi^\prime_\varepsilon(r)=\Bigg(\frac{1}{\omega_{\alpha}r^\alpha}\int_0^re^{\mu_\varepsilon \big(u_\varepsilon^\frac{p}{p-1}(r_{\varepsilon}s)-a_\varepsilon^{\frac{p}{p-1}}\big)}\varphi_\varepsilon^\frac{1}{p-1}\mathrm{d}\lambda_{\theta}
			+\frac{a_\varepsilon r_\varepsilon^{\theta+1}\nu}{\omega_{\alpha}}\frac{1}{r^\alpha}\int_{0}^r\varphi_\varepsilon^{p-1}\mathrm{d}\lambda_{\theta}\Bigg)^{\frac{1}{p-1}}.
		\end{equation}
		By Lemma~\ref{limr} and Lemma~\ref{to1}, we have   $a_\varepsilon r_\varepsilon^{\theta+1}\to 0$ and $\varphi_{\varepsilon}\to 1$ in $C^1_{loc}[0,\infty)$. Thus, given $r_0>0$ in \eqref{wline} we get $\psi_\varepsilon^\prime$ is bounded in $C[0,r_0]$. Since $\psi_\varepsilon(0)=0$  for all $\varepsilon>0$, we get $\psi_\varepsilon$ is a uniformly equicontinuos family in $C[0,r_0]$. By Ascoli-Arzelà theorem we obtain $\psi_\varepsilon \to w$ in $C[0,r_0]$. Since $r_0$ is arbitrary, we have $\psi_\varepsilon \to \psi$ in $C^0_{\text{loc}}[0, \infty)$. In addition, from \eqref{phipsi}
		\begin{equation}\label{taylor}
			u_\varepsilon(r_\varepsilon s)^{\frac{p}{p-1}}-a_\varepsilon^{\frac{p}{p-1}}=\frac{p}{p-1} \psi_\varepsilon(s)(1+O(\varphi_{\varepsilon}-1)).
		\end{equation}
	By integrating in \eqref{wprime-line} on the interval $(0, r)$, we conclude that
    \begin{equation}\nonumber
			\psi_\varepsilon(r)=-\int_{0}^{r}\Big(\frac{1}{\omega_{\alpha}t^{\alpha}}\int_0^t\Big(e^{\mu_\varepsilon (u_\varepsilon^\frac{p}{p-1}-a_\varepsilon^{\frac{p}{p-1}})}\varphi_\varepsilon^\frac{1}{p-1}
            +a_\varepsilon r_\varepsilon^{\theta+1}\nu\varphi_\varepsilon^{p-1}\Big)\mathrm{d}\lambda_{\theta}\Big)^{\frac{1}{p-1}}\mathrm{d} t.
		\end{equation}
		Letting $\varepsilon \to 0$  the dominated convergence theorem implies
		$$
		\psi(r)=- \int _0^r\left( \frac{1}{\omega_\alpha t^\alpha} \int_0^t e^{\frac{p}{p-1}\mu_{\alpha,\theta} \psi}\mathrm{d}\lambda_{\theta}\right)^\frac{1}{p-1} \mathrm{d} t.
		$$
        In particular, 
        \begin{equation}\label{Wprime-line}
            \psi^{\prime}(r)=-\left( \frac{1}{\omega_\alpha r^\alpha} \int_0^r e^{\frac{p}{p-1}\mu_{\alpha,\theta} \psi}\mathrm{d}\lambda_{\theta}\right)^\frac{1}{p-1} \mathrm{d} t.
        \end{equation}
	Further, $\psi$ satisfies the  the differential equation 
		\begin{equation*}
			\begin{cases}
				-\omega_\alpha(r^\alpha|\psi^\prime|^{p-2} \psi^\prime)^\prime=\omega_\theta r^\theta e^{\frac{p}{p-1}\mu_{\alpha,\theta}\psi} 	\mbox{ on } [0,\infty)\\
				\psi(0)=0 \mbox{ and } 
				 \psi^\prime(0)=0.
			\end{cases}
		\end{equation*}
	By uniqueness of solutions, we obtain the desired expression for $\psi$. Finally, by comparing \eqref{wprime-line} and \eqref{Wprime-line}, and using Lemma~\ref{limr} and Lemma~\ref{to1} we can see that $\psi^{\prime}_{\epsilon}\to \psi^{\prime}$ in $C^0_{loc}[0,\infty)$.
	\end{proof}

	\begin{lemma}\label{lemmaequal1}
		The function $\psi$ in Lemma~\ref{limpsi} satisfies  
		\begin{equation*}
			\int_0^\infty e^{\frac{p}{p-1}\mu_{\alpha,\theta}\psi} \mathrm{d}\lambda_{\theta}=1.
		\end{equation*}
    
	\end{lemma}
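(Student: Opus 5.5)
Since Lemma~\ref{limpsi} gives $\psi$ explicitly, the integral can be computed directly. With $\mu_{\alpha,\theta}=(\theta+1)\omega_\alpha^{1/\alpha}$ and $\alpha=p-1$, we have
\[
e^{\frac{p}{p-1}\mu_{\alpha,\theta}\psi}=\bigl(1+c_0 r^{\frac{\theta+1}{p-1}}\bigr)^{-p}.
\]
Substituting $t=c_0 r^{\frac{\theta+1}{p-1}}$, so that $r^{\theta+1}=(t/c_0)^{p-1}$ and $r^\theta\,\mathrm{d}r=\frac{p-1}{\theta+1}c_0^{-(p-1)}t^{p-2}\,\mathrm{d}t$, we obtain
\[
\int_0^\infty e^{\frac{p}{p-1}\mu_{\alpha,\theta}\psi}\,\mathrm{d}\lambda_\theta=\omega_\theta\frac{p-1}{\theta+1}c_0^{-(p-1)}\int_0^\infty \frac{t^{p-2}}{(1+t)^p}\,\mathrm{d}t .
\]
The last integral is the Beta function $B(p-1,1)=\Gamma(p-1)\Gamma(1)/\Gamma(p)=\frac{1}{p-1}$. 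Moreover $c_0^{p-1}=c_0^{\alpha}=\frac{\omega_\theta}{\theta+1}$. Hence the whole expression equals $\omega_\theta\frac{p-1}{\theta+1}\cdot\frac{\theta+1}{\omega_\theta}\cdot\frac{1}{p-1}=1$.

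I should double-check this against the equation satisfied by $\psi$, since the constants in Lemma~\ref{limpsi} must be consistent with it. From \eqref{Wprime-line},
\[
\omega_\alpha r^\alpha|\psi'(r)|^{p-1}=\int_0^r e^{\frac{p}{p-1}\mu_{\alpha,\theta}\psi}\,\mathrm{d}\lambda_\theta,
\]
so the integral we want equals $\lim_{r\to\infty}\omega_\alpha r^\alpha|\psi'(r)|^{p-1}$. Differentiating the explicit $\psi$ gives
\[
|\psi'(r)|=\frac{p-1}{\mu_{\alpha,\theta}}\cdot\frac{\theta+1}{p-1}\cdot\frac{c_0 r^{\frac{\theta+1}{p-1}-1}}{1+c_0r^{\frac{\theta+1}{p-1}}}\sim \frac{\theta+1}{\mu_{\alpha,\theta}}\,r^{-1}\quad\text{as } r\to\infty .
\]
Therefore $\omega_\alpha r^\alpha|\psi'|^{p-1}\to \omega_\alpha\bigl(\frac{\theta+1}{\mu_{\alpha,\theta}}\bigr)^{\alpha}=\omega_\alpha\,\omega_\alpha^{-1}=1$. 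This route needs only monotone convergence of the left-hand side and the explicit derivative, so it avoids any special-function evaluation and confirms the value obtained above.

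The main obstacle is making sure the normalisation of $c_0$ is consistent, so that $\psi$ really solves the limiting ODE with exactly these constants. Both computations agree and give $1$ provided $c_0^{\alpha}=\omega_\theta/(\theta+1)$. I would present the asymptotic argument as the proof and the Beta-function computation as a check.
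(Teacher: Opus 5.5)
Your proposal is correct. Your Beta-function computation (substituting $t=c_0r^{\frac{\theta+1}{p-1}}$, using $B(p-1,1)=\frac{1}{p-1}$ and $c_0^{\alpha}=\frac{\omega_\theta}{\theta+1}$) is exactly the paper's proof, which evaluates the integral as $(p-1)\Gamma(p-1)\Gamma(1)/\Gamma(p)=1$.

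The route you propose to present as the main proof is genuinely different. You read off the total mass from the identity \eqref{Wprime-line},
$\int_0^r e^{\frac{p}{p-1}\mu_{\alpha,\theta}\psi}\,\mathrm{d}\lambda_\theta=\omega_\alpha r^\alpha|\psi'(r)|^{p-1}$, and let $r\to\infty$. Monotone convergence is applied to the integral $r\mapsto\int_0^r$ on the right, not to the left-hand side as you wrote.

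The gain is that the explicit formula enters only through $r\psi'(r)\to-\frac{\theta+1}{\mu_{\alpha,\theta}}$, and this limit does not depend on $c_0$. The value $1$ then comes purely from $\mu_{\alpha,\theta}=(\theta+1)\omega_\alpha^{1/\alpha}$. So no special functions are needed, the argument is insensitive to the normalisation of $c_0$, and it shows why the bubble carries unit mass: the logarithmic decay rate of $\psi$ is fixed by the critical constant.

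The cost is that you rely on \eqref{Wprime-line} holding for the limit profile at every $r>0$. That is established inside the proof of Lemma~\ref{limpsi} by dominated convergence, so it is available. By contrast, the paper's computation is self-contained once the closed form of $\psi$ is known, but it depends on the exact value of $c_0$. You correctly checked that this value is consistent with the limiting equation.
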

	\begin{proof}
    Let $\Gamma$ be the Gamma Euler function given in \eqref{fractional volume}. We recall the following  properties:
    \begin{equation*}
      \Gamma(1)=1,\;\; \Gamma(x+1)=x\Gamma(x) \;\;\mbox{and}\;\;  \int_0^\infty \frac{s^{x-1}}{(1+s)^{x+y}}=\frac{\Gamma(x)\Gamma(y)}{\Gamma(x+y)}, \ x,y>0,
    \end{equation*}
    see for instance \cite{S-functions} for details. Thus, by
using the change variables
$s=c_{0}r^{\frac{\theta+1}{p-1}}$    we obtain 
\begin{equation}\label{integral-cabulosa}
\int_{0}^{\infty}\mathrm{e}^{\frac{p}{p-1}\mu_{\alpha,\theta}\psi(r)}\,\mathrm{d}\lambda_{\theta}
=\int_{0}^{\infty}\frac{\mathrm{d}\lambda_{\theta}}{\left(1+c_0r^{\frac{\theta+1}{p-1}}\right)^{p}}
=(p-1)\frac{\Gamma(p-1)\Gamma(1)}{\Gamma(p)}=1.
\end{equation}
	\end{proof}
	
\begin{lemma}\label{uepsilon} For $c>1$, let $u_{\varepsilon,c}=\min\{u_\varepsilon, \frac{a_\varepsilon}{c}\}$.  Then,
		\begin{enumerate}
			\item[$i)$] $\displaystyle\lim_{\varepsilon \to 0}\|u_{\varepsilon,c}^\prime\|^{p}_{L^{p}_\alpha} = \frac{1}{c}$,\\
            
			\item[$ii)$] $\displaystyle\lim_{\varepsilon \to 0}\|(u_\varepsilon -u_{\varepsilon,c})^\prime\|^{p}_{L^{p}_\alpha} = \frac{c-1}{c}$.
		\end{enumerate}
	\end{lemma}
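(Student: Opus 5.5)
Since $u_{\varepsilon,c}$ and $u_\varepsilon-u_{\varepsilon,c}$ have disjoint supports of derivative,
\[
\|u_{\varepsilon,c}'\|^p_{L^p_\alpha}+\|(u_\varepsilon-u_{\varepsilon,c})'\|^p_{L^p_\alpha}=\|u_\varepsilon'\|^p_{L^p_\alpha}=1+\nu\|u_\varepsilon\|^p_{L^p_\theta}\to 1 .
\]
So it suffices to prove two lower bounds: $\liminf\|u_{\varepsilon,c}'\|^p\ge 1/c$ and $\liminf\|(u_\varepsilon-u_{\varepsilon,c})'\|^p\ge (c-1)/c$. Their sum already equals the limit $1$, so both $\liminf$'s are limits and equal to the stated values.

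\textbf{Test with $u_{\varepsilon,c}$.} Use $u_{\varepsilon,c}\in X^{1,p}_R(\alpha,\theta)$ as test function in \eqref{Lagrange}. Since $u_{\varepsilon,c}'=u_\varepsilon'$ where $u_\varepsilon<a_\varepsilon/c$ and vanishes elsewhere, this gives
\[
\|u_{\varepsilon,c}'\|^p_{L^p_\alpha}=\frac1{\lambda_\varepsilon}\int_0^R e^{\mu_\varepsilon u_\varepsilon^{p/(p-1)}}u_\varepsilon^{1/(p-1)}u_{\varepsilon,c}\,\mathrm d\lambda_\theta+\nu\int_0^R u_\varepsilon^{p-1}u_{\varepsilon,c}\,\mathrm d\lambda_\theta .
\]
The last term tends to $0$ by \eqref{convergencias}. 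In the first integral, discard everything except the ball $\{r\le Lr_\varepsilon\}$, where $u_\varepsilon>a_\varepsilon/c$ for small $\varepsilon$ by Lemma~\ref{to1}, so $u_{\varepsilon,c}=a_\varepsilon/c$ there. Changing variables $r=r_\varepsilon s$ and using \eqref{rE}, that piece equals
\[
\frac1c\int_0^L e^{\mu_\varepsilon(u_\varepsilon^{p/(p-1)}(r_\varepsilon s)-a_\varepsilon^{p/(p-1)})}\varphi_\varepsilon^{1/(p-1)}(s)\,\mathrm d\lambda_\theta(s).
\]
By Lemmas~\ref{to1} and \ref{limpsi} together with \eqref{taylor}, this converges to $\frac1c\int_0^L e^{\frac p{p-1}\mu_{\alpha,\theta}\psi}\mathrm d\lambda_\theta$. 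Letting $L\to\infty$ and using Lemma~\ref{lemmaequal1} gives $\liminf\ge 1/c$.

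\textbf{Test with $(u_\varepsilon-a_\varepsilon/c)^+=u_\varepsilon-u_{\varepsilon,c}$.} The same argument gives
\[
\|(u_\varepsilon-u_{\varepsilon,c})'\|^p_{L^p_\alpha}\ge \frac1{\lambda_\varepsilon}\int_0^{Lr_\varepsilon}e^{\mu_\varepsilon u_\varepsilon^{p/(p-1)}}u_\varepsilon^{1/(p-1)}\Big(u_\varepsilon-\frac{a_\varepsilon}{c}\Big)\mathrm d\lambda_\theta .
\]
On this ball $u_\varepsilon-a_\varepsilon/c=a_\varepsilon(\varphi_\varepsilon-1/c)$ with $\varphi_\varepsilon\to1$ uniformly. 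The right side therefore tends to $\frac{c-1}{c}\int_0^L e^{\frac p{p-1}\mu_{\alpha,\theta}\psi}\mathrm d\lambda_\theta$, which gives $\liminf\ge(c-1)/c$. The $\nu$-term here is nonnegative and can simply be dropped.

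\textbf{Main obstacle.} The delicate step is the convergence of the rescaled integrals on $[0,L]$. We need that $u_\varepsilon^{p/(p-1)}(r_\varepsilon s)-a_\varepsilon^{p/(p-1)}\to \frac p{p-1}\psi(s)$ uniformly on $[0,L]$ and that $\mu_\varepsilon\to\mu_{\alpha,\theta}$. Both follow from \eqref{taylor}, the $C^0_{loc}$ convergence $\psi_\varepsilon\to\psi$, and $\varphi_\varepsilon\to1$. The integrands are bounded by $1$, since $u_\varepsilon\le a_\varepsilon$, so dominated convergence on the compact interval applies. Everything else is bookkeeping; all discarded regions contribute nonnegatively, which is why only lower bounds are needed.
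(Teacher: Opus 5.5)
Your proposal is correct and follows the paper's proof essentially step for step: you test \eqref{Lagrange} with $u_{\varepsilon,c}$ and with $(u_\varepsilon-a_\varepsilon/c)^+$, keep only the blow-up region $[0,Lr_\varepsilon]$, pass to the limit via Lemmas~\ref{to1}, \ref{limpsi} and \ref{lemmaequal1}, and close with the splitting $\|u_\varepsilon'\|^p_{L^p_\alpha}=\|u_{\varepsilon,c}'\|^p_{L^p_\alpha}+\|(u_\varepsilon-u_{\varepsilon,c})'\|^p_{L^p_\alpha}=1+\nu\|u_\varepsilon\|^p_{L^p_\theta}$. The only cosmetic difference is that you get both limits symmetrically from the two lower bounds, whereas the paper first obtains $i)$ via the $\limsup$ and then deduces $ii)$ from $i)$.
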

	\begin{proof}	
	 Taking $v = u_{\varepsilon,c}$ in \eqref{Lagrange}, we can write 
		\begin{equation*}
        \begin{aligned}
			\int_0^R|(u_{\varepsilon,c})^\prime|^{p} \mathrm{d}\lambda_\alpha
            =\frac{1}{\lambda_\varepsilon}\int_0^Re^{\mu_\varepsilon u_\varepsilon^\frac{p}{p-1}}u_\varepsilon^\frac{1}{p-1}u_{\varepsilon,c}\mathrm{d}\lambda_{\theta}+\nu\int_0^R u_\varepsilon^{p-1}u_{\varepsilon,c}\mathrm{d}\lambda_{\theta}.
        \end{aligned}
	\end{equation*}
    Since $u_\varepsilon \to 0$ in $L^p_\theta$ for any $p>1$, we obtain
\begin{equation}\label{grad-step1}
        \begin{aligned}
			\int_0^R|(u_{\varepsilon,c})^\prime|^{p} \mathrm{d}\lambda_\alpha
			&=\frac{1}{\lambda_\varepsilon}\int_0^Re^{\mu_\varepsilon u_\varepsilon^\frac{p}{p-1}}u_\varepsilon^\frac{1}{p-1}u_{\varepsilon,c} \mathrm{d}\lambda_{\theta}+o_\varepsilon(1).
        \end{aligned}
	\end{equation}
Now, for $r_0>0$,  Lemma~\ref{to1} ensures that $[0, r_0 r_\varepsilon) \subset \left\{ u_\varepsilon > a_\varepsilon/c \right\}$,  for $\varepsilon>0$ small enough.  Hence
\begin{equation}\label{grad-step2}
        \begin{aligned}
			\frac{1}{\lambda_\varepsilon}\int_0^Re^{\mu_\varepsilon u_\varepsilon^\frac{p}{p-1}}u_\varepsilon^\frac{1}{p-1}u_{\varepsilon,c} \mathrm{d}\lambda_{\theta}
			&=\frac{1}{\lambda_\varepsilon}\int_{\{u_\varepsilon\leq\frac{a_\varepsilon}{c}\}}e^{\mu_\varepsilon u_\varepsilon^\frac{p}{p-1}}u_\varepsilon^\frac{1}{p-1}u_{\varepsilon,c} \mathrm{d}\lambda_{\theta}+\frac{1}{\lambda_\varepsilon}\int_{\{u_\varepsilon>\frac{a_\varepsilon}{c}\}}e^{\mu_\varepsilon u_\varepsilon^\frac{p}{p-1}}u_\varepsilon^\frac{1}{p-1}u_{\varepsilon,c} \mathrm{d}\lambda_{\theta}\\
			&\ge \frac{a_\varepsilon}{c}\frac{1}{\lambda_\varepsilon}\int_0^{r_\varepsilon r_0}e^{\mu_\varepsilon u_\varepsilon^\frac{p}{p-1}}u_\varepsilon^\frac{1}{p-1} \mathrm{d}\lambda_{\theta}.
        \end{aligned}
	\end{equation}
Setting $s = r_\varepsilon r$ and using \eqref{taylor} as $\varepsilon \to 0$, we obtain

\begin{equation}\label{grad-step3}
        \begin{aligned}
			\frac{a_\varepsilon}{c}\int_0^{r_\varepsilon r_0}\frac{1}{\lambda_\varepsilon}e^{\mu_\varepsilon u_\varepsilon^\frac{p}{p-1}}u_\varepsilon^\frac{1}{p-1} \mathrm{d}\lambda_{\theta}&=\frac{1}{c}\int_0^{r_0}e^{\mu_\varepsilon a_\varepsilon^\frac{p}{p-1}(\varphi^\frac{p}{p-1}_\varepsilon-1)}\varphi_\varepsilon^\frac{1}{p-1} \mathrm{d}\lambda_{\theta}+o_\varepsilon(1)\\
			&\to\frac{1}{c}\int_0^{r_0} e^{\frac{p}{p-1}\mu_{\alpha,\theta}\psi} \mathrm{d}\lambda_{\theta},\;\;\mbox{as}\;\;\varepsilon\to 0.
            \end{aligned}
		\end{equation}
By letting $r_0 \to \infty$ and using Lemma~\ref{lemmaequal1}, the estimates \eqref{grad-step1},\eqref{grad-step2} and \eqref{grad-step3} yield
 $$
 \liminf_{\varepsilon \to 0}\|u_{\varepsilon,c}^\prime\|^{p}_{L^{p}_\alpha}\geq\frac{1}{c}.
 $$
It is easy to check that $u_{\varepsilon}-u_{\varepsilon,c}=(u_\varepsilon-\frac{a_\varepsilon}{c})^{+}$. By taking the test function $v=(u_\varepsilon-\frac{a_\varepsilon}{c})^{+}$ in \eqref{Lagrange} we have 		
		\begin{equation}\label{u-c=step1}
        \begin{aligned}
		\int_0^R\Big|(u_\varepsilon-u_{\varepsilon,c})^\prime\Big|^{p} \mathrm{d}\lambda_\alpha
            =&\frac{1}{\lambda_\varepsilon}\int_0^R e^{\mu_\varepsilon u_\varepsilon^\frac{p}{p-1}}u_\varepsilon^\frac{1}{p-1}\Big(u_\varepsilon-\frac{a_\varepsilon}{c}\Big)^{+}\mathrm{d}\lambda_{\theta}+\nu\int_{0}^{R} u_\varepsilon^{p-1}\Big(u_\varepsilon-\frac{a_\varepsilon}{c}\Big)^{+}\mathrm{d}\lambda_{\theta}\\
			 \geq & \frac{1}{\lambda_\varepsilon}\int_0^Re^{\mu_\varepsilon u_\varepsilon^\frac{p}{p-1}}u_\varepsilon^\frac{1}{p-1}\Big(u_\varepsilon-\frac{a_\varepsilon}{c}\Big)^{+} \mathrm{d}\lambda_{\theta}.
            \end{aligned}
		\end{equation}
        Now, by setting $s = r_\varepsilon r$ we have
        \begin{equation}\label{u-c=step2}
        \begin{aligned}
		\frac{1}{\lambda_\varepsilon}\int_0^Re^{\mu_\varepsilon u_\varepsilon^\frac{p}{p-1}}u_\varepsilon^\frac{1}{p-1}\Big(u_\varepsilon-\frac{a_\varepsilon}{c}\Big)^{+} \mathrm{d}\lambda_{\theta}&=\frac{1}{\lambda_\varepsilon}\int_{\{u_\varepsilon>\frac{a_\varepsilon}{c}\}}e^{\mu_\varepsilon u_\varepsilon^\frac{p}{p-1}}u_\varepsilon^\frac{1}{p-1}\Big(u_\varepsilon-\frac{a_\varepsilon}{c}\Big)^{+} \mathrm{d}\lambda_{\theta}\\
			&\ge \frac{1}{\lambda_\varepsilon}\int_0^{r_\varepsilon r_0}e^{\mu_\varepsilon u_\varepsilon^\frac{p}{p-1}}u_\varepsilon^\frac{1}{p-1}\Big(u_\varepsilon-\frac{a_{\varepsilon}}{c}\Big)^{+} \mathrm{d}\lambda_{\theta}\\
			&=\int_0^{r_0}e^{\mu_\varepsilon a_\varepsilon^\frac{p}{p-1}(\varphi^\frac{p}{p-1}_\varepsilon-1)}\varphi_\varepsilon^\frac{1}{p-1}\Big(\varphi_\varepsilon-\frac{1}{c}\Big)^{+} \mathrm{d}\lambda_{\theta}.
            \end{aligned}
		\end{equation}
		By combining Lemma~\ref{to1} and Lemma~\ref{limpsi}  with \eqref{taylor}, \eqref{u-c=step1} and \eqref{u-c=step2}, we have
		$$
		\liminf_{\varepsilon\to 0}\int_0^R|(u_\varepsilon-u_{\varepsilon,c})^\prime|^{p} \mathrm{d}\lambda_\alpha\geq \frac{c-1}{c}\int_{0}^{r_0} e^{\frac{p}{p-1}\mu_{\alpha,\theta}\psi} \mathrm{d}\lambda_{\theta}.
		$$
		Letting $r_0 \to \infty $ and using Lemma~\ref{lemmaequal1}, we get
		$$
		\liminf_{\varepsilon\to 0}\int_0^R|(u_\varepsilon-u_{\varepsilon,c})^\prime|^{p} \mathrm{d}\lambda_\alpha\geq \frac{c-1}{c}.
		$$
Now, observe that
		\begin{equation}\label{equality}
			\|u_{\varepsilon,c}^\prime\|^{p}_{L^{p}_\alpha}=\|u^\prime_\varepsilon\|^{p}_{L^{p}_\alpha}-\|(u_\varepsilon-u_{\varepsilon,c})^\prime\|^{p}_{L^{p}_\alpha}=1+\nu\|u_\varepsilon\|^{p}_{L^{p}_\theta}-\|(u_\varepsilon-u_{\varepsilon,c})^\prime\|^{p}_{L^{p}_\alpha}.
		\end{equation}
		Letting $\varepsilon \to 0$, we obtain $\limsup_{\varepsilon\to0}\|u_{\varepsilon,c}^\prime\|^{p}_{L^{p}_\alpha} \leq \frac{1}{c}$. Therefore, 

		$$\
		\lim_{\varepsilon \to 0}\|(u_{\varepsilon,c})^\prime\|^{p}_{L^{p}_\alpha} = \frac{1}{c}.
		$$ 
		By \eqref{equality} and using $(i)$, we have 
		$$
		\lim_{\varepsilon \to 0}\|(u_\varepsilon-u_{\varepsilon,c})^\prime\|^{p}_{L^{p}_\alpha}=\frac{c-1}{c}.
		$$
	\end{proof}
	
	\begin{lemma}\label{desig}
	It holds 
		$$
		S(p,\nu,\theta,R) =\lim_{\varepsilon \to 0} S_{\varepsilon}(p,\nu,\theta,R)  \leq |B_R|_{\theta}+\limsup_{\varepsilon \to 0}\frac{\lambda_\varepsilon}{a_\varepsilon^\frac{p}{p-1}}.
		$$
	\end{lemma}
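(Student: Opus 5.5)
The idea is to split $(0,R)$ at the level set $\{u_\varepsilon=a_\varepsilon/c\}$ for a fixed $c>1$, use Lemma~\ref{uepsilon} on each piece, and then let $c\to1^+$. The first equality $S=\lim S_\varepsilon$ is already in Lemma~\ref{prop1}. Write
\begin{equation*}
\int_0^R e^{\mu_\varepsilon u_\varepsilon^{\frac{p}{p-1}}}\mathrm{d}\lambda_\theta=\int_{\{u_\varepsilon\le a_\varepsilon/c\}}e^{\mu_\varepsilon u_\varepsilon^{\frac{p}{p-1}}}\mathrm{d}\lambda_\theta+\int_{\{u_\varepsilon> a_\varepsilon/c\}}e^{\mu_\varepsilon u_\varepsilon^{\frac{p}{p-1}}}\mathrm{d}\lambda_\theta=:I_\varepsilon+J_\varepsilon .
\end{equation*}

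\textbf{The region $\{u_\varepsilon\le a_\varepsilon/c\}$.} Here $I_\varepsilon\le\int_0^R e^{\mu_\varepsilon u_{\varepsilon,c}^{p/(p-1)}}\mathrm{d}\lambda_\theta$. By Lemma~\ref{uepsilon}(i), $\|u_{\varepsilon,c}'\|^p_{L^p_\alpha}\to 1/c<1$. Normalize $w_\varepsilon=u_{\varepsilon,c}/\|u_{\varepsilon,c}'\|_{L^p_\alpha}$ and pick $q>1$ with $q\|u_{\varepsilon,c}'\|^{p/(p-1)}_{L^p_\alpha}\le 1$ for small $\varepsilon$. Then \eqref{TM1} gives that $e^{\mu_\varepsilon u_{\varepsilon,c}^{p/(p-1)}}$ is bounded in $L^q_\theta$. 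Since $u_{\varepsilon,c}\le u_\varepsilon\to0$ a.e. by \eqref{Ablow-up}, Vitali's theorem yields $\int_0^R e^{\mu_\varepsilon u_{\varepsilon,c}^{p/(p-1)}}\mathrm{d}\lambda_\theta\to|B_R|_\theta$. Hence $\limsup I_\varepsilon\le |B_R|_\theta$.

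\textbf{The region $\{u_\varepsilon>a_\varepsilon/c\}$.} On this set $u_\varepsilon^{p/(p-1)}\ge (a_\varepsilon/c)^{p/(p-1)}$. Therefore
\begin{equation*}
J_\varepsilon\le \frac{c^{\frac{p}{p-1}}}{a_\varepsilon^{\frac{p}{p-1}}}\int_{\{u_\varepsilon>a_\varepsilon/c\}}e^{\mu_\varepsilon u_\varepsilon^{\frac{p}{p-1}}}u_\varepsilon^{\frac{p}{p-1}}\mathrm{d}\lambda_\theta\le c^{\frac{p}{p-1}}\frac{\lambda_\varepsilon}{a_\varepsilon^{\frac{p}{p-1}}},
\end{equation*}
where the last step uses the definition of $\lambda_\varepsilon$ in \eqref{Lagrange}.

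\textbf{Conclusion and main obstacle.} Combining the two bounds gives
\begin{equation*}
S(p,\nu,\theta,R)\le|B_R|_\theta+c^{\frac{p}{p-1}}\limsup_{\varepsilon\to0}\frac{\lambda_\varepsilon}{a_\varepsilon^{p/(p-1)}}\quad\text{for every } c>1.
\end{equation*}
Letting $c\to1^+$ gives the claim. If the $\limsup$ is $+\infty$ the statement is trivial, so nothing more is needed there. The only delicate point is the uniform $L^q_\theta$ bound on the truncated part. This requires Lemma~\ref{uepsilon}(i), which gives the strict gradient gap $1/c<1$, together with the fact that $\mu_\varepsilon\le\mu_{\alpha,\theta}$, so that the subcritical Trudinger--Moser bound \eqref{TM1} applies with exponent $q>1$.
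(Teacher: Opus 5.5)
Your proposal is correct and follows the paper's proof essentially verbatim. It uses the same split at the level $a_\varepsilon/c$ and the same bound of the upper part by $c^{\frac{p}{p-1}}\lambda_\varepsilon/a_\varepsilon^{\frac{p}{p-1}}$; it gets $L^q_\theta$-integrability ($q>1$) of $e^{\mu_\varepsilon u_{\varepsilon,c}^{p/(p-1)}}$ from Lemma~\ref{uepsilon}(i) and \eqref{TM1}, lets $\varepsilon\to 0$ and then $c\to 1^+$, and spells out the Vitali step (via $u_{\varepsilon,c}\le u_\varepsilon\to 0$ a.e.) that the paper leaves implicit.
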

	\begin{proof}
		The first identity was established in Lemma~\ref{prop1}. Furthermore, we may assume that 
        $$\displaystyle\limsup_{\varepsilon \to 0} \frac{\lambda_\varepsilon}{a_\varepsilon^{\frac{p}{p-1}}} < \infty,$$
       otherwise there is nothing to prove. Since $u_\varepsilon \geq a_\varepsilon/c$ in $\{u_\varepsilon> a_\varepsilon\slash c\}$, we can write
		\begin{eqnarray*}
			 S_{\varepsilon}(p,\nu,\theta,R)&=&\int_{\{u_\varepsilon \leq a_\varepsilon/ c\}}e^{\mu_\varepsilon u_\varepsilon^\frac{p}{p-1}} \mathrm{d}\lambda_{\theta}+\int_{\{u_\varepsilon > a_\varepsilon / c\}}e^{\mu_\varepsilon u_\varepsilon^\frac{p}{p-1}} \mathrm{d}\lambda_{\theta}\\
			&\leq &\int_{\{u_\varepsilon \leq a_\varepsilon / c\}}e^{\mu_\varepsilon u_{\varepsilon,c}^\frac{p}{p-1}} \mathrm{d}\lambda_{\theta}+\frac{c^\frac{p}{p-1}}{a_\varepsilon^{\frac{p}{p-1}}}\int_{\{u_\varepsilon > a_\varepsilon / c\}}e^{\mu_\varepsilon u_\varepsilon^\frac{p}{p-1}} u^\frac{p}{p-1}_\varepsilon \mathrm{d}\lambda_{\theta}\\
			&\leq& \int_0^Re^{\mu_\varepsilon u_{\varepsilon,c}^\frac{p}{p-1}} \mathrm{d}\lambda_{\theta}+\frac{c^\frac{p}{p-1}}{a_\varepsilon^{\frac{p}{p-1}}} \lambda_\varepsilon.
		\end{eqnarray*}
		By using Lemma~\ref{uepsilon} we have $\lim_{\varepsilon \to 0}\|u_{\varepsilon,c}^\prime\|_{L^{p}_\alpha}  \leq c^{-1/p}<1$.  Moreover, the Trudinger-Moser type inequality \eqref{TM1} implies $e^{q \mu_\varepsilon u_{\varepsilon,c}^{p/(p-1)}} \in L^1_\theta$ for some $q>1$. Thus, letting $\varepsilon \to 0$ and $c \to 1$, we obtain the desired result.
		
	\end{proof}
	\noindent By Lemma~\ref{desig}, we obtain 
    \begin{equation}\label{avsL}
        \displaystyle\lim_{\varepsilon \to 0} \frac{a_\varepsilon}{\lambda_\varepsilon} = 0.
    \end{equation}
    In fact, if ${\lambda_\varepsilon}/{a_\varepsilon}$ is bounded,  Lemma~\ref{desig} yields 
	$$
	 S(p,\nu,\theta,R) \leq |B_R|_\theta +\limsup_{\varepsilon \to 0} \frac{\lambda_\varepsilon}{a_\varepsilon} \frac{1}{a_\varepsilon^\frac{1}{p-1}}=|B_R|_{\theta}
	$$
	with is impossible.

We have the following consequence of Lemma~\ref{desig}.
\begin{lemma}\label{coro2} $ S(p,\nu,\theta,R)=|B_R|_{\theta}+
\displaystyle\lim_{r_0\rightarrow\infty}\limsup_{\varepsilon\rightarrow
0}
\int_{0}^{r_{\varepsilon}r_0}e^{\mu_{\varepsilon}u_{\varepsilon}^{\frac{p}{p-1}}}\,\mathrm{d}\lambda_{\theta}.$
\end{lemma}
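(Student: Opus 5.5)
We split $[0,R]$ into three regions: the concentration core $[0,r_\varepsilon r_0]$, the intermediate set where $u_\varepsilon > a_\varepsilon/c$ but $r\ge r_\varepsilon r_0$, and the outer set $\{u_\varepsilon\le a_\varepsilon/c\}$. On the outer set we reuse the argument of Lemma~\ref{desig}. By Lemma~\ref{uepsilon}, $\|u_{\varepsilon,c}'\|^p_{L^p_\alpha}\to 1/c<1$, so \eqref{TM1} gives a uniform $L^q_\theta$ bound on $e^{\mu_\varepsilon u_{\varepsilon,c}^{p/(p-1)}}$ for some $q>1$. Since $u_{\varepsilon,c}\to 0$ a.e., Vitali's theorem gives
\[
\int_{\{u_\varepsilon\le a_\varepsilon/c\}} e^{\mu_\varepsilon u_\varepsilon^{p/(p-1)}}\,\mathrm d\lambda_\theta \le \int_0^R e^{\mu_\varepsilon u_{\varepsilon,c}^{p/(p-1)}}\,\mathrm d\lambda_\theta \to |B_R|_\theta.
\]
The same Vitali argument gives the reverse inequality up to a vanishing error: the outer set covers all of $(0,R)$ except $\{u_\varepsilon>a_\varepsilon/c\}$, whose $\lambda_\theta$-measure tends to $0$ because $u_\varepsilon\to0$ in $L^q_\theta$ and $a_\varepsilon\to\infty$. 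Hence the outer contribution tends exactly to $|B_R|_\theta$.

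On the intermediate region we use $u_\varepsilon>a_\varepsilon/c$ exactly as in Lemma~\ref{desig}:
\[
\int_{\{u_\varepsilon>a_\varepsilon/c\}\setminus[0,r_\varepsilon r_0]} e^{\mu_\varepsilon u_\varepsilon^{p/(p-1)}}\,\mathrm d\lambda_\theta \le \frac{c^{p/(p-1)}}{a_\varepsilon^{p/(p-1)}}\Big(\lambda_\varepsilon-\int_0^{r_\varepsilon r_0} e^{\mu_\varepsilon u_\varepsilon^{p/(p-1)}}u_\varepsilon^{p/(p-1)}\,\mathrm d\lambda_\theta\Big).
\]
To evaluate the core, change variables $r=r_\varepsilon s$ and use \eqref{rE}, \eqref{taylor} and Lemmas~\ref{to1}--\ref{limpsi}. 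This gives
\[
\frac{a_\varepsilon^{p/(p-1)}}{\lambda_\varepsilon}\int_0^{r_\varepsilon r_0} e^{\mu_\varepsilon u_\varepsilon^{p/(p-1)}}u_\varepsilon^{p/(p-1)}\,\mathrm d\lambda_\theta \to \int_0^{r_0}e^{\frac{p}{p-1}\mu_{\alpha,\theta}\psi}\,\mathrm d\lambda_\theta ,
\]
and, since $\varphi_\varepsilon\to1$ uniformly on $[0,r_0]$, also
\[
\frac{a_\varepsilon^{p/(p-1)}}{\lambda_\varepsilon}\int_0^{r_\varepsilon r_0} e^{\mu_\varepsilon u_\varepsilon^{p/(p-1)}}\,\mathrm d\lambda_\theta \to \int_0^{r_0}e^{\frac{p}{p-1}\mu_{\alpha,\theta}\psi}\,\mathrm d\lambda_\theta.
\]

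Write $L=\limsup_{\varepsilon\to0}\lambda_\varepsilon/a_\varepsilon^{p/(p-1)}$. If $L=\infty$, the core integral already diverges and both sides of the statement are infinite, so we may assume $L<\infty$, as in Lemma~\ref{desig}. Put $\delta(r_0)=1-\int_0^{r_0}e^{\frac p{p-1}\mu_{\alpha,\theta}\psi}\,\mathrm d\lambda_\theta$, which tends to $0$ as $r_0\to\infty$ by Lemma~\ref{lemmaequal1}. The previous two displays then bound the intermediate region by $c^{p/(p-1)}L\,\delta(r_0)+o_\varepsilon(1)$. Adding the three regions,
\[
S_\varepsilon \le |B_R|_\theta+\int_0^{r_\varepsilon r_0} e^{\mu_\varepsilon u_\varepsilon^{p/(p-1)}}\,\mathrm d\lambda_\theta + c^{p/(p-1)}L\,\delta(r_0)+o_\varepsilon(1),
\]
and also, trivially, $S_\varepsilon \ge \int_{\{u_\varepsilon\le a_\varepsilon/c\}}e^{\mu_\varepsilon u_\varepsilon^{p/(p-1)}}\,\mathrm d\lambda_\theta+\int_0^{r_\varepsilon r_0}e^{\mu_\varepsilon u_\varepsilon^{p/(p-1)}}\,\mathrm d\lambda_\theta$, since the core lies inside $\{u_\varepsilon>a_\varepsilon/c\}$. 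Take $\limsup_{\varepsilon\to0}$ and use $S_\varepsilon\to S(p,\nu,\theta,R)$ from Lemma~\ref{prop1}; then let $r_0\to\infty$, where the limit in $r_0$ exists by monotonicity. This squeezes $S(p,\nu,\theta,R)$ to $|B_R|_\theta+\lim_{r_0\to\infty}\limsup_{\varepsilon\to0}\int_0^{r_\varepsilon r_0}e^{\mu_\varepsilon u_\varepsilon^{p/(p-1)}}\,\mathrm d\lambda_\theta$.

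\textbf{Main obstacle.} The delicate step is making the outer-region limit exact, equal to $|B_R|_\theta$ and not merely bounded by it. This needs the uniform integrability from Lemma~\ref{uepsilon} together with the vanishing measure of $\{u_\varepsilon>a_\varepsilon/c\}$. A second point to handle carefully is that the core limits above are normalized by $\lambda_\varepsilon/a_\varepsilon^{p/(p-1)}$, which need not converge. We pass to a subsequence realizing the $\limsup$, so that $\delta(r_0)$ controls the intermediate region uniformly.
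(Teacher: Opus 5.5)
Your argument is correct, but it is organized differently from the paper's.

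The paper never splits off an intermediate region. After the change of variables $r=r_\varepsilon s$ it writes $\int_0^{r_\varepsilon r_0}e^{\mu_\varepsilon u_\varepsilon^{p/(p-1)}}\,\mathrm{d}\lambda_\theta=\frac{\lambda_\varepsilon}{a_\varepsilon^{p/(p-1)}}\int_0^{r_0}e^{\mu_\varepsilon(u_\varepsilon^{p/(p-1)}(r_\varepsilon s)-a_\varepsilon^{p/(p-1)})}\,\mathrm{d}\lambda_\theta$. By Lemmas~\ref{limpsi} and \ref{lemmaequal1}, the right-hand side of the statement is therefore exactly $|B_R|_\theta+\limsup_{\varepsilon\to0}\lambda_\varepsilon/a_\varepsilon^{p/(p-1)}$. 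The upper bound for $S(p,\nu,\theta,R)$ is then Lemma~\ref{desig} verbatim. The lower bound comes from the trivial estimate $\int_{r_\varepsilon r_0}^R e^{\mu_\varepsilon u_\varepsilon^{p/(p-1)}}\,\mathrm{d}\lambda_\theta\ge|B_R|_\theta-|B_{r_\varepsilon r_0}|_\theta$.

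You instead re-run the decomposition of Lemma~\ref{desig} with a third region. You control the neck $\{u_\varepsilon>a_\varepsilon/c\}\setminus[0,r_\varepsilon r_0]$ by the bubble tail $c^{p/(p-1)}L\,\delta(r_0)$ and squeeze directly against the core integral. This repeats work already done in Lemma~\ref{desig}, but it buys two things. First, you never need $c\to1$: a fixed $c>1$ suffices because $\delta(r_0)\to0$. Second, you get the finer information that the neck region carries no mass in the limit. The paper's route is shorter, and it isolates the quantity $\limsup_{\varepsilon\to0}\lambda_\varepsilon/a_\varepsilon^{p/(p-1)}$, which is exactly what Lemma~\ref{lemacontrad} goes on to estimate.

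Neither point in your ``main obstacle'' is actually delicate. The lower bound on the outer region is just $e^t\ge1$ together with $|\{u_\varepsilon>a_\varepsilon/c\}|_\theta\to0$. No subsequence is needed either: after the change of variables, $\lambda_\varepsilon/a_\varepsilon^{p/(p-1)}$ is a pure prefactor, so the normalized core integrals converge along the full family. This is the same cancellation the paper exploits.
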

\begin{proof}
Fixed $r_0>0$, we have
\begin{align*}
	\int_{0}^{r_{\varepsilon}r_0}e^{\mu_{\varepsilon}u_{\varepsilon}^{\frac{p}{p-1}}}\,\mathrm{d}
	\lambda_{\theta}=\frac{\lambda_\varepsilon}{a_{\varepsilon}^{\frac{p}{p-1}}}\int_{0}^{r_0}e^{\mu_{\varepsilon}\big(u^{\frac{p}{p-1}}_\varepsilon(r_\varepsilon s)-a_\varepsilon^{\frac{p}{p-1}}\big)}\,\mathrm{d}
	\lambda_{\theta}.
\end{align*}
From Lemma~\ref{limpsi}, identity \eqref{taylor} and Lemma~\ref{lemmaequal1}, we can write
\begin{align}\label{equilibrio}
\lim_{r_0\to\infty}\limsup_{\varepsilon\to0}	\int_{0}^{r_{\varepsilon}r_0}e^{\mu_{\varepsilon}u_{\varepsilon}^{\frac{p}{p-1}}}\,\mathrm{d}
	\lambda_{\theta}=\limsup_{\varepsilon\to0}\frac{\lambda_\varepsilon}{a_{\varepsilon}^{\frac{p}{p-1}}}.
\end{align}
On the other hand, we have 
$$\int_{r_{\varepsilon}r_0}^{R}e^{\mu_{\varepsilon}u_{\varepsilon}^{\frac{p}{p-1}}}\,\mathrm{d}
\lambda_{\theta}\ge |B_R|_{\theta}-
|B_{r_{\varepsilon}r_0}|_{\theta}$$
which gives 
\begin{equation}\nonumber
	\begin{aligned}
\int_{0}^{r_{\varepsilon}r_0}e^{\mu_{\varepsilon}u_{\varepsilon}^{\frac{p}{p-1}}}\,\mathrm{d}
\lambda_{\theta}&=S_{\varepsilon}(p,\nu,\theta, R)-\int_{r_{\varepsilon}r_0}^{R}e^{\mu_{\varepsilon}u_{\varepsilon}^{\frac{p}{p-1}}}\,\mathrm{d}
\lambda_{\theta}\\
&\le S_{\varepsilon}(p,\nu,\theta, R) -|B_R|_{\theta}+
|B_{r_{\varepsilon}r_0}|_{\theta}.
\end{aligned}
\end{equation}
Since
$\lim_{\varepsilon\rightarrow0}|B_{r_{\varepsilon}r_0}|_{\theta}=0,$
it follows that
\begin{equation}\label{EEE1}
\begin{aligned}
|B_R|_{\theta}  +\limsup_{\varepsilon\rightarrow
0}\int_{0}^{r_{\varepsilon}r_0}e^{\mu_{\varepsilon}|u_{\varepsilon}|^{\frac{p}{p-1}}}\,\mathrm{d}
\lambda_{\theta} &\le S(p,\nu,\theta,R).
\end{aligned}
\end{equation}
From \eqref{equilibrio} and \eqref{EEE1} we obtain 
\begin{align*}
	S(p,\nu,\theta,R)&\ge |B_R|_{\theta}  +\lim_{r_0\to\infty}\limsup_{\varepsilon\rightarrow
		0}\int_{0}^{r_{\varepsilon}r_0}e^{\mu_{\varepsilon}|u_{\varepsilon}|^{\frac{p}{p-1}}}\,\mathrm{d}
	\lambda_{\theta} \\
	&= |B_R|_{\theta}+\limsup_{\varepsilon\to0}\frac{\lambda_\varepsilon}{a_{\varepsilon}^{\frac{p}{p-1}}}.
\end{align*}
Hence, the result follows from  Lemma~\ref{desig}.
\end{proof}
    \begin{lemma}\label{lemadirac} It holds that
\begin{equation*}
\lim_{\varepsilon \to 0}\int_{0}^{R} 
\frac{a_\varepsilon}{\lambda_\varepsilon} 
u_\varepsilon^{\frac{1}{p-1}}
e^{u_\varepsilon^{\frac{p}{p-1}}}
\, v \, \mathrm{d}\lambda_{\theta}
= v(0),
\end{equation*}
for all $v \in C[0,R]$.
\end{lemma}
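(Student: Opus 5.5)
The exponent in the statement is presumably $\mu_\varepsilon u_\varepsilon^{\frac{p}{p-1}}$, the nonlinearity of the Euler--Lagrange equation \eqref{Lagrange}, and we read it that way. The plan is a standard concentration argument. We split $(0,R)$ into three regions: the core $(0,r_\varepsilon r_0)$, the annulus where $u_\varepsilon>a_\varepsilon/c$ but $r\ge r_\varepsilon r_0$, and the region $\{u_\varepsilon\le a_\varepsilon/c\}$. We show that all the mass $\frac{a_\varepsilon}{\lambda_\varepsilon}\int_0^R u_\varepsilon^{1/(p-1)}e^{\mu_\varepsilon u_\varepsilon^{p/(p-1)}}\,\mathrm{d}\lambda_\theta$ concentrates in the core. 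Since $r_\varepsilon\to0$ (Lemma~\ref{limr}) and $v$ is continuous at $0$, the core then contributes $v(0)$ in the limit.

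\emph{Core.} Change variables $r=r_\varepsilon s$ and use \eqref{rE}. This gives
\begin{equation*}
\frac{a_\varepsilon}{\lambda_\varepsilon}\int_0^{r_\varepsilon r_0}u_\varepsilon^{\frac{1}{p-1}}e^{\mu_\varepsilon u_\varepsilon^{\frac{p}{p-1}}}v\,\mathrm{d}\lambda_\theta=\int_0^{r_0}\varphi_\varepsilon^{\frac{1}{p-1}}e^{\mu_\varepsilon a_\varepsilon^{\frac{p}{p-1}}(\varphi_\varepsilon^{\frac{p}{p-1}}-1)}v(r_\varepsilon s)\,\mathrm{d}\lambda_\theta .
\end{equation*}
By Lemma~\ref{to1}, Lemma~\ref{limpsi} and \eqref{taylor}, the integrand converges uniformly on $[0,r_0]$ to $v(0)e^{\frac{p}{p-1}\mu_{\alpha,\theta}\psi}$. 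Letting $r_0\to\infty$ and using Lemma~\ref{lemmaequal1}, the core contributes exactly $v(0)$.

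\emph{Region $\{u_\varepsilon\le a_\varepsilon/c\}$.} Here $u_\varepsilon=u_{\varepsilon,c}$. Lemma~\ref{uepsilon}(i) gives $\|u_{\varepsilon,c}'\|^p_{L^p_\alpha}\to 1/c<1$, so by \eqref{TM1} the function $e^{\mu_\varepsilon u_{\varepsilon,c}^{p/(p-1)}}$ is bounded in $L^q_\theta$ for some $q>1$. By H\"older, and since $u_\varepsilon\to0$ in every $L^s_\theta$, the integral $\int u_{\varepsilon,c}^{1/(p-1)}e^{\mu_\varepsilon u_{\varepsilon,c}^{p/(p-1)}}\,\mathrm{d}\lambda_\theta$ is bounded. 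Multiplying by $a_\varepsilon/\lambda_\varepsilon\to0$ (see \eqref{avsL}) shows this part tends to $0$ uniformly for $\|v\|_\infty\le1$.

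\emph{Annulus (main obstacle).} We use the total mass and avoid pointwise bounds. On $\{u_\varepsilon>a_\varepsilon/c\}$ we have $a_\varepsilon\le c\,u_\varepsilon$, so
\begin{equation*}
\frac{a_\varepsilon}{\lambda_\varepsilon}\int_{\{u_\varepsilon>a_\varepsilon/c\}\setminus(0,r_\varepsilon r_0)}u_\varepsilon^{\frac1{p-1}}e^{\mu_\varepsilon u_\varepsilon^{\frac p{p-1}}}\,\mathrm{d}\lambda_\theta\le \frac{c}{\lambda_\varepsilon}\Big(\int_0^R u_\varepsilon^{\frac p{p-1}}e^{\mu_\varepsilon u_\varepsilon^{\frac p{p-1}}}\,\mathrm{d}\lambda_\theta-\int_0^{r_\varepsilon r_0}u_\varepsilon^{\frac p{p-1}}e^{\mu_\varepsilon u_\varepsilon^{\frac p{p-1}}}\,\mathrm{d}\lambda_\theta\Big).
\end{equation*}
The first integral equals $\lambda_\varepsilon$ by definition. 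After scaling, the second equals $\lambda_\varepsilon\int_0^{r_0}\varphi_\varepsilon^{p/(p-1)}e^{\mu_\varepsilon a_\varepsilon^{p/(p-1)}(\varphi_\varepsilon^{p/(p-1)}-1)}\,\mathrm{d}\lambda_\theta$, which is $\lambda_\varepsilon\big(\int_0^{r_0}e^{\frac p{p-1}\mu_{\alpha,\theta}\psi}\,\mathrm{d}\lambda_\theta+o_\varepsilon(1)\big)$. The bound is therefore at most $c\big(1-\int_0^{r_0}e^{\frac p{p-1}\mu_{\alpha,\theta}\psi}\,\mathrm{d}\lambda_\theta\big)+o_\varepsilon(1)$, which tends to $0$ as $r_0\to\infty$ by Lemma~\ref{lemmaequal1}.

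Fix $c>1$, for instance $c=2$. Let $\varepsilon\to0$ and then $r_0\to\infty$. Combining the three regions gives the limit $v(0)$ for every $v\in C[0,R]$.
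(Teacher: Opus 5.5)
Your proposal is correct and follows the paper's proof. It uses the same three-region split, the same rescaling $r=r_\varepsilon s$ for the core, and the same H\"older plus Trudinger--Moser bound via $u_{\varepsilon,c}$, together with $a_\varepsilon/\lambda_\varepsilon\to 0$, on $\{u_\varepsilon\le a_\varepsilon/c\}$. The one real difference is in the annulus: you subtract the rescaled core mass of $c\,u_\varepsilon^{p/(p-1)}e^{\mu_\varepsilon u_\varepsilon^{p/(p-1)}}/\lambda_\varepsilon$ rather than that of $a_\varepsilon u_\varepsilon^{1/(p-1)}e^{\mu_\varepsilon u_\varepsilon^{p/(p-1)}}/\lambda_\varepsilon$. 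This gives the bound $c\bigl(1-\int_0^{r_0}e^{\frac{p}{p-1}\mu_{\alpha,\theta}\psi}\,\mathrm{d}\lambda_\theta\bigr)+o_\varepsilon(1)$, so you can keep $c$ fixed instead of also sending $c\to 1$ as the paper does.
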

	\begin{proof}
Let $c>1$.  By Lemma~\ref{to1}, there exists $\varepsilon>0$ sufficiently small such that
$
[0, r_\varepsilon r_0) \subset \left\{ u_\varepsilon > \frac{a_\varepsilon}{c} \right\}.
$
We divide the interval $(0,R]$ into three disjoint parts 
		\begin{equation}
            (0,R]=(0,r_\varepsilon r_0)\cup \big(\{u_\varepsilon> a_\varepsilon/c\}\backslash (0,r_\varepsilon r_0)\big)\cup \{u_\varepsilon \leq a_\varepsilon/c\},
		\end{equation}
		where $\varepsilon > 0$ is small enough. From the change of variables $r = r_\varepsilon s$ we get
		\begin{align*}
			I_1=\int_{0}^{r_{\varepsilon}r_0}\frac{a_\varepsilon}{\lambda_\varepsilon} u_\varepsilon^\frac{1}{p-1}e^{u_\varepsilon^\frac{p}{p-1}} v \mathrm{d}\lambda_{\theta} &= \int_0^{r_0} \varphi_\varepsilon(s)^\frac{1}{p-1}e^{\mu_\varepsilon( u_\varepsilon^{\frac{p}{p-1}}(r_{\varepsilon }s)-a_\varepsilon^\frac{p}{p-1})}v(r_\varepsilon s) \mathrm{d}\lambda_{\theta}\\
			&=v(\tau r_\varepsilon)\int_0^{r_0} \varphi_\varepsilon(s)^\frac{1}{p-1}e^{\mu_\varepsilon( u_\varepsilon^{\frac{p}{p-1}}(r_{\varepsilon}s)-a_\varepsilon^\frac{p}{p-1})}\mathrm{d}\lambda_{\theta},
		\end{align*}
        for some $\tau \in [0, r_0]$.
		Using \eqref{taylor}, Lemma~\ref{lemmaequal1} and letting $\varepsilon \to 0$ and $r_0 \to \infty$, we obtain that $I_1 \to v(0)$ as $\varepsilon \to 0$. In addition,
		\begin{align*}
			I_2&=\int_{\{u_\varepsilon> a_\varepsilon/c\}\backslash (0,r_\varepsilon r_0)}\frac{a_\varepsilon}{\lambda_\varepsilon} u_\varepsilon^\frac{1}{p-1}e^{u_\varepsilon^\frac{p}{p-1}} v \mathrm{d}\lambda_{\theta} \\
            &\leq \|v\|_{L^\infty} \Big(\int_{\{u_\varepsilon> a_\varepsilon/ c\}}\frac{a_\varepsilon}{\lambda_\varepsilon} u_\varepsilon^\frac{1}{p-1}e^{u_\varepsilon^\frac{p}{p-1}} \mathrm{d}\lambda_{\theta}-\int_0^{r_0r_\varepsilon}\frac{a_\varepsilon}{\lambda_\varepsilon} u_\varepsilon^\frac{1}{p-1}e^{u_\varepsilon^\frac{p}{p-1}} \mathrm{d}\lambda_{\theta}\Big)\\
            &\leq\|v\|_{L^\infty}\Big(c\int_{\{u_\varepsilon> a_\varepsilon/c\}}\frac{1}{\lambda_\varepsilon} u_\varepsilon^\frac{p}{p-1}e^{u_\varepsilon^\frac{p}{p-1}} \mathrm{d}\lambda_{\theta} -\int_0^{r_0r_\varepsilon}\frac{a_\varepsilon}{\lambda_\varepsilon} u_\varepsilon^\frac{1}{p-1}e^{u_\varepsilon^\frac{p}{p-1}}\mathrm{d}\lambda_{\theta}\Big)\\
			&\leq\|v\|_{L^\infty}  \Big(c-\int_0^{r_0r_\varepsilon}\frac{a_\varepsilon}{\lambda_\varepsilon} u_\varepsilon^\frac{1}{p-1}e^{u_\varepsilon^\frac{p}{p-1}} \mathrm{d}\lambda_{\theta}\Big).
		\end{align*}
		Letting $\varepsilon \to 0$, $r_0 \to \infty$, and $c \to 1$, using the integral in $I_1$ with $v\equiv 1$ we obtain $I_2\to 0$, as $\varepsilon\to 0$.  Finally, 
		\begin{align}\label{I3-fase 1}
			I_3=\int_{\{u_\varepsilon \leq a_\varepsilon/c\}}\frac{a_\varepsilon}{\lambda_\varepsilon} u_\varepsilon^\frac{1}{p-1}e^{\mu_\varepsilon u_\varepsilon^\frac{p}{p-1}} v \mathrm{d}\lambda_{\theta} &\leq \|v\|_{L^\infty} \frac{a_\varepsilon}{\lambda_\varepsilon}\int_{\{u_\varepsilon \leq a_\varepsilon/c\}} u_\varepsilon^\frac{1}{p-1}e^{\mu_\varepsilon u_\varepsilon^\frac{p}{p-1}} \mathrm{d}\lambda_{\theta}.
		\end{align}
		For $\eta > p$, we can choose $c > 1$ (closely of $1$) and $\varepsilon > 0$ small enough such that
		$$
		\frac{1}{\eta} + \frac{1}{c^{1/(p-1)} - \varepsilon} = 1.
		$$
		Setting $\overline{u}_{\varepsilon,c} = c^{1/p}u_{\varepsilon,c}$, by Lemma~\ref{uepsilon}, we have $\left\|\overline{u}_{\varepsilon,c}^\prime\right\|_{L^{p}_\alpha} \leq 1$.
		Also,
		$$
		\left(c^{\frac{1}{p-1}} - \varepsilon\right) \mu_\varepsilon u_{\varepsilon,c}^{{\frac{p}{p-1}}} = \frac{c^{\frac{1}{p-1}} - \varepsilon}{c^{\frac{1}{p-1}}} \mu_\varepsilon \overline{u}_{\varepsilon,c}^{\frac{p}{p-1}} \leq \mu_\varepsilon \overline{u}_{\varepsilon,c}^{\frac{p}{p-1}}.
		$$
		Since $u_{\varepsilon,c}=u_{\varepsilon}$ on $\{u_{\varepsilon}\le a_{\varepsilon}/c\}$, by H\"{o}lder inequality
		\begin{equation*}
			\begin{aligned}
	\int_{\{u_\varepsilon \leq a_\varepsilon/c\}} u_\varepsilon^\frac{1}{p-1}e^{\mu_\varepsilon u_\varepsilon^\frac{p}{p-1}} \mathrm{d}\lambda_{\theta} &\le 	\int_0^R u_\varepsilon^{\frac{1}{p-1}} e^{\mu_\varepsilon u_{\varepsilon,c}^{\frac{p}{p-1}}}\mathrm{d}\lambda_{\theta}\\
	& \leq \left( \int_0^R u_\varepsilon^{\frac{\eta}{p-1}} \mathrm{d}\lambda_{\theta} \right)^{\frac{1}{\eta}} \left( \int_0^R e^{\mu_\varepsilon \overline{u}_{\varepsilon,c}^{\frac{p}{p-1}}} \mathrm{d}\lambda_\theta \right)^{\frac{1}{c^{1/(p-1)} - \varepsilon}} .
	\end{aligned}
		\end{equation*}
	Thus, from \eqref{TM1},  \eqref{avsL} and \eqref{I3-fase 1},  we have that $ I_3 \to 0 $ as $ \varepsilon \to 0 $.
	\end{proof}
	The following result was proved in  \cite[Lemma~9]{JFcasocritico}.
    \begin{lemma}\label{JFcri}
        Let $g\in X^{1,p}_{R}(\alpha,\theta)$ be a non-increasing function solving the weak equation
        \begin{equation}\nonumber
            \int_{0}^{R}|g^{\prime}|^{p-2}g^{\prime}v\mathrm{d}\lambda_{\alpha}=\int_{0}^{R}fv\mathrm{d}\lambda_{\theta},\;\; \forall\; v\in X^{1,p}_{R}(\alpha,\theta),
        \end{equation}
        where $f\in L^{1}_{\theta}$. Then, for every $0<\chi<\mu_{\alpha,\alpha}/\|f\|^{1/\alpha}_{L^1_{\theta}}$ there holds $e^{\chi g}\in L^{1}_{\alpha}$ and 
        \begin{equation}\nonumber
            \int_{0}^{R}e^{\chi g}\mathrm{d}\lambda_{\alpha}\le C(\alpha, \chi).
        \end{equation}
        Moreover, $g\in X^{1,q}_R(\alpha,\theta)$ for $1<q<p$ and 
        $
            \|g^{\prime}\|_{L^{q}_{\alpha}}\le C(\alpha,q, \|f\|_{L^{1}_{\theta}
            }).$
    \end{lemma}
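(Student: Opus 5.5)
The estimate should follow directly from the integral representation of $g'$, as in the proof of Lemma~\ref{prop1}, combined with the relation $\alpha=p-1$ from \eqref{TM-case}. I read the weak equation with $v^{\prime}$ on the left-hand side, which is evidently what is meant. I will not need any blow-up or Moser-type argument.

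\emph{Step 1: pointwise bound on $g'$.} I insert the test function $v_\rho$ from \eqref{teste} into the weak equation and let $\rho\to0$. Exactly as in the derivation of \eqref{defuprime}, this gives, for a.e.\ $r\in(0,R)$,
\begin{equation*}
\omega_\alpha r^{\alpha}|g^{\prime}(r)|^{p-2}g^{\prime}(r)=-\int_0^r f\,\mathrm{d}\lambda_\theta .
\end{equation*}
Since $p-1=\alpha$, it follows that
\begin{equation*}
|g^{\prime}(r)|\le \Big(\frac{\|f\|_{L^1_\theta}}{\omega_\alpha}\Big)^{1/\alpha}\frac1r .
\end{equation*}
The point is that only the $L^1_\theta$ norm of $f$ enters, and the weight exponents combine so that the bound is exactly of order $1/r$.

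\emph{Step 2: logarithmic bound on $g$.} Because $g$ is non-increasing and $g(r)\to0$ as $r\to R$ (the boundary condition built into $X^{1,p}_R(\alpha,\theta)$), we have $g\ge0$ and $g(r)=\int_r^R|g^{\prime}|\,\mathrm{d}s$. Integrating the bound from Step 1 gives
\begin{equation*}
0\le g(r)\le \kappa\ln\frac Rr,\qquad \kappa:=\Big(\frac{\|f\|_{L^1_\theta}}{\omega_\alpha}\Big)^{1/\alpha}.
\end{equation*}
Therefore
\begin{equation*}
\int_0^R e^{\chi g}\,\mathrm{d}\lambda_\alpha\le \omega_\alpha R^{\chi\kappa}\int_0^R r^{\alpha-\chi\kappa}\,\mathrm{d}r .
\end{equation*}
This integral is finite precisely when $\chi\kappa<\alpha+1$. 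That condition is $\chi<(\alpha+1)\omega_\alpha^{1/\alpha}/\|f\|_{L^1_\theta}^{1/\alpha}=\mu_{\alpha,\alpha}/\|f\|_{L^1_\theta}^{1/\alpha}$, which is exactly the hypothesis on $\chi$. The resulting bound is explicit, depending on $\alpha$, $\chi$, $R$ and the product $\chi\kappa$.

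\emph{Step 3: gradient estimate.} For $1<q<p$, Step 1 gives
\begin{equation*}
\int_0^R|g^{\prime}|^q\,\mathrm{d}\lambda_\alpha\le \omega_\alpha\kappa^q\int_0^R r^{\alpha-q}\,\mathrm{d}r .
\end{equation*}
This is finite because $\alpha-q>\alpha-p=-1$. Hence $g\in X^{1,q}_R(\alpha,\theta)$ with the stated dependence of $\|g^{\prime}\|_{L^q_\alpha}$ on $\alpha$, $q$ and $\|f\|_{L^1_\theta}$ (and $R$).

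\emph{Main obstacle.} The only delicate point is justifying the limit $\rho\to0$ in Step 1 for a merely $L^1_\theta$ datum $f$ and a function $g$ that is only locally absolutely continuous. This should go through as in \cite{Clement-deFigueiredo-Mitidieri}: the right-hand side converges by dominated convergence, and the left-hand side converges at every Lebesgue point of $r^\alpha|g^{\prime}|^{p-2}g^{\prime}$.
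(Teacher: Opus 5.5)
Your proof is correct. The paper gives no argument for this lemma; it imports it from \cite[Lemma~9]{JFcasocritico}. So what you have written is a self-contained replacement, not a variant of an in-paper proof.

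The mechanism is right. Testing with $v_\rho$ from \eqref{teste} gives $\omega_\alpha r^\alpha|g'|^{p-2}g'=-\int_0^r f\,\mathrm{d}\lambda_\theta$ a.e. The relation $p-1=\alpha$ turns this into $|g'(r)|\le\kappa/r$. Integrating from $r$ to $R$ then gives $g(r)\le\kappa\ln(R/r)$. The integrability condition $\chi\kappa<\alpha+1$ is then exactly $\chi<\mu_{\alpha,\alpha}/\|f\|^{1/\alpha}_{L^1_\theta}$, and the $L^q_\alpha$ bound for $q<p=\alpha+1$ follows the same way.

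Compared with the truncation or rearrangement arguments needed for the non-radial analogues in $W^{1,n}_0(\Omega)$, this radial first-integral route has two advantages:
\begin{itemize}
\item it gives explicit constants, e.g.\ $\int_0^R e^{\chi g}\,\mathrm{d}\lambda_\alpha\le\omega_\alpha R^{\alpha+1}/(\alpha+1-\chi\kappa)$;
\item it shows the monotonicity hypothesis is superfluous, since $g(r)=-\int_r^R g'\,\mathrm{d}s\le\int_r^R|g'|\,\mathrm{d}s$ regardless.
\end{itemize}

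Your remark that the constant depends on $R$ and on $\chi\|f\|^{1/\alpha}_{L^1_\theta}$ is accurate, and this dependence is unavoidable. Taking $f$ to be a normalized multiple of $\mathbf{1}_{[0,1/n]}$ shows the bound must blow up as $\chi\kappa\uparrow\alpha+1$. So the notation $C(\alpha,\chi)$ in the statement should be read in that sense. For the application in Lemma~\ref{Lemag2}, only monotone dependence on an upper bound for $\|f\|_{L^1_\theta}$ is needed, and your Step~3 provides it.

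Two points deserve one line each in a write-up:
\begin{itemize}
\item The boundary condition $g(r)\to0$ as $r\to R$ persists in the completion, because of the estimate $|u(r)|\le\|u'\|_{L^p_\alpha}\big(\omega_\alpha^{-1/\alpha}\ln\frac{R}{r}\big)^{(p-1)/p}$ used in the proof of Lemma~\ref{concentracao}.
\item Since $f$ is only in $L^1_\theta$, the weak equation is meaningful only for bounded test functions such as $v_\rho$. Those are the only ones you use.
\end{itemize}
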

We apply Lemma~\ref{JFcri} to prove the following:
	\begin{lemma}\label{Lemag2}  
		Let $(f_{\varepsilon})$ be a bounded sequence in $L^{1}_{\theta}$ and $ (g_\varepsilon) \subset X^{1,p}_R(\alpha,\theta)$ with $\theta\ge \alpha$ be a sequence of non-increasing functions satisfying  
		\begin{equation}\label{Eq do lemma} 
			\int_0^R |g^\prime_\varepsilon|^{p-2} g^\prime_\varepsilon v^\prime \, \mathrm{d}\lambda_\alpha = \int_0^R f_\varepsilon v \, \mathrm{d}\lambda_{\theta} + \nu \int_0^R |g_\varepsilon|^{p-2} g_\varepsilon v \, \mathrm{d}\lambda_{\theta}, \quad \forall v \in X^{1,p}_R(\alpha,\theta),  
		\end{equation}  
		where  $ 0 \leq \nu < \lambda_{\alpha,\theta} $. Then, $ g_\varepsilon \in X^{1,q}_R(\alpha,\theta) $ for each $ 1< q < p $ and  
		\begin{equation*}  
			\|g^\prime_\varepsilon\|_{L^{q}_\alpha} \leq C(\alpha, q, c_0),  
		\end{equation*}  
		where $ c_0 $ is an upper bound of $ (f_\varepsilon) $ in $ L^1_\theta $.  
	\end{lemma}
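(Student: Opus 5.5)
Lemma~\ref{Lemag2} should follow from Lemma~\ref{JFcri} by absorbing the lower-order term $\nu|g_\varepsilon|^{p-2}g_\varepsilon$ into the right-hand side. Set
\[
\tilde f_\varepsilon = f_\varepsilon + \nu |g_\varepsilon|^{p-2} g_\varepsilon .
\]
Then each $g_\varepsilon$ is a non-increasing weak solution of the equation in Lemma~\ref{JFcri}, with datum $\tilde f_\varepsilon$. The whole task therefore reduces to a uniform bound
\[
\|\tilde f_\varepsilon\|_{L^1_\theta}\le C(c_0).
\]
Since $\|\tilde f_\varepsilon\|_{L^1_\theta}\le c_0+\nu\|g_\varepsilon\|^{p-1}_{L^{p-1}_\theta}$, it suffices to bound $g_\varepsilon$ in $L^{p-1}_\theta$ (or in $L^1_\theta$ if $p-1\le 1$; recall $p\ge2$) in terms of $c_0$ alone.

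This a priori bound is the main obstacle. The natural energy estimate uses $v=g_\varepsilon$ as a test function:
\[
\|g_\varepsilon'\|^p_{L^p_\alpha}-\nu\|g_\varepsilon\|^p_{L^p_\theta}=\int_0^R f_\varepsilon g_\varepsilon\,\mathrm{d}\lambda_\theta .
\]
The left side is at least $(1-\nu/\lambda_{\alpha,\theta})\|g_\varepsilon'\|^p_{L^p_\alpha}$. The right side, however, needs $\|g_\varepsilon\|_\infty=g_\varepsilon(0)$, which is not controlled by the energy in the Trudinger--Moser regime. So I will not work with the energy norm. Instead I will use the pointwise representation obtained, as in \eqref{defuprime}, from the test functions $v_\rho$:
\[
\omega_\alpha r^\alpha|g_\varepsilon'|^{p-1}(r)\le \|f_\varepsilon\|_{L^1_\theta}+\nu\int_0^r |g_\varepsilon|^{p-1}\,\mathrm{d}\lambda_\theta .
\]
In this form it suffices to control $\int_0^R g_\varepsilon^{p-1}\,\mathrm{d}\lambda_\theta$.

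I will argue by contradiction. Suppose $t_\varepsilon:=\|g_\varepsilon\|_{L^{p-1}_\theta}\to\infty$, and normalize $h_\varepsilon=g_\varepsilon/t_\varepsilon$. The functions $h_\varepsilon$ solve the same type of equation with datum
\[
f_\varepsilon/t_\varepsilon^{p-1}+\nu h_\varepsilon^{p-1},
\]
whose $L^1_\theta$ norm is bounded. Lemma~\ref{JFcri} then gives $\|h_\varepsilon'\|_{L^q_\alpha}\le C$ for every $q<p$. A compactness argument in $X^{1,q}_R$ (Rellich-type compactness into $L^{p-1}_\theta$, using the pointwise bound $|h(r)|\le C\ln(R/r)^{(q-1)/q}$-type estimates near $0$) yields $h_\varepsilon\to h$ with $\|h\|_{L^{p-1}_\theta}=1$. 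Passing to the limit in the equation, $h\ge0$ would be a nontrivial weak solution of
\[
-\Delta_p h=\nu h^{p-1}.
\]
Testing with $h$ (after checking $h\in X^{1,p}_R$ by bootstrapping the representation formula, since the datum $\nu h^{p-1}$ is now bounded) gives $\|h'\|^p_{L^p_\alpha}=\nu\|h\|^p_{L^p_\theta}$. This contradicts $\nu<\lambda_{\alpha,\theta}$ and the definition of $\lambda_{\alpha,\theta}$. Hence $t_\varepsilon$ is bounded. The bound depends only on $c_0$ once the compactness step is made quantitative, which the contradiction argument does implicitly.

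With $\|\tilde f_\varepsilon\|_{L^1_\theta}\le C(c_0)$ in hand, Lemma~\ref{JFcri} applied to each $g_\varepsilon$ gives $g_\varepsilon\in X^{1,q}_R(\alpha,\theta)$ for all $1<q<p$ and
\[
\|g_\varepsilon'\|_{L^q_\alpha}\le C(\alpha,q,c_0),
\]
which is the claim. The step I expect to be delicate is justifying the limit passage and the $L^{p-1}_\theta$ compactness for the normalized sequence. Here I would lean on the explicit one-dimensional radial formula for $h_\varepsilon'$ and dominated convergence, rather than on abstract embeddings.
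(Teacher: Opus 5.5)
Your proposal is the paper's proof: normalize $h_\varepsilon=g_\varepsilon/\|g_\varepsilon\|_{L^{p-1}_\theta}$, apply Lemma~\ref{JFcri} to the normalized equation, extract a limit $h$ with $\|h\|_{L^{p-1}_\theta}=1$ solving $-\Delta_p h=\nu h^{p-1}$ weakly, contradict $\nu<\lambda_{\alpha,\theta}$, and then apply Lemma~\ref{JFcri} to $\tilde f_\varepsilon=f_\varepsilon+\nu|g_\varepsilon|^{p-2}g_\varepsilon$. The differences are small. The paper gets the $L^{p-1}_\theta$ compactness from the compact Sobolev embedding of $X^{1,s}_R(\alpha,\theta)$ with $p/2<s<p$, not from pointwise bounds. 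If you use pointwise bounds, the uniform bound the representation formula gives is $|h_\varepsilon(r)|\le C\ln(R/r)$, not the logarithm raised to the power $(q-1)/q$, and this bound is still enough. You also explicitly check $h\in X^{1,p}_R(\alpha,\theta)$ before testing with $h$, a step the paper leaves implicit.
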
  
	\begin{proof}
	      For $\nu=0$, it follows from Lemma~\ref{JFcri}. For $\nu>0$, we claim that
$(g_{\varepsilon})$ is bounded in $L^{p-1}_{\theta}$. Assume by contradiction that
$$
\limsup_{\epsilon\to 0}\|g_{\varepsilon}\|_{L^{p-1}_{\theta}}=\infty
$$ and define
$h_{\varepsilon}=g_{\varepsilon}/\|g_{\varepsilon}\|_{L^{p-1}_{\theta}}$. Then 
$$\|h_{\varepsilon}\|_{L^{p-1}_{\theta}}=1, \;\;\;
h^{\prime}_{\varepsilon}=\frac{g^{\prime}_{\varepsilon}}{\|g_{\varepsilon}\|_{L^{p-1}_{\theta}}}$$
and by \eqref{Eq do lemma}
\begin{equation}\label{E13}
\int_{0}^{R}|h^{\prime}_{\varepsilon}|^{p-2}h^{\prime}_{\varepsilon}v^{\prime}\;\mathrm{d}\lambda_{\alpha}=\int_{0}^{R}\overline{f}_{\varepsilon}v\;\mathrm{d}\lambda_{\theta},\;\;\mbox{with}\;\; \overline{f}_{\varepsilon}=\frac{1}{\|g_{\varepsilon}\|^{p-1}_{L^{p-1}_{\theta}}}\left(f_{\varepsilon}+\nu
|g_{\varepsilon}|^{p-1}\right),
\end{equation}
for all $v \in X^{1,p}_R(\alpha,\theta)$.
Note that $(\overline{f}_{\varepsilon})$ is bounded in
$L^{1}_{\theta}$. Then, from Lemma~\ref{JFcri}  we conclude that  $\|h^{\prime}_{\varepsilon}\|_{L^{s}_{\alpha}}\le c_0$, for $1<s<p$. Since we are assuming $\theta\ge \alpha>\alpha-s$, by \eqref{cond-equivNorm} we get  $(h_{\varepsilon})$ bounded in $X^{1,s}_R(\alpha,\theta)$. In particular, up to a subsequence, $h_{\varepsilon}\rightharpoonup h$ in $X^{1,s}_R(\alpha,\theta)$, for  $1<s<p$.   Since $\theta\ge\alpha=p-1$, for  $p/2<s<p$, we get
\begin{align*}
\alpha-s+1> \alpha-p+1=0 \;\; \mbox{and}\;\; s^{*}(\alpha,s,\theta)=\frac{(\theta+1)s}{\alpha-s+1}\ge \frac{ps}{p-s}>p.   
\end{align*}
Thus, the compact embedding \eqref{imersão-Sobolev} yields   $h_{\varepsilon}\rightarrow h$ in $L^{p-1}_{\theta}$ and $ h_{\varepsilon}(r)\rightarrow h(r)$ a.e. in $(0,R)$. It follows that 
\begin{equation}\label{normah=1}
    \|h\|_{L^{p-1}_{\theta}}=\lim_{\varepsilon  \rightarrow
0}\|h_{\varepsilon}\|_{L^{p-1}_{\theta}}=1.
\end{equation}
However, the
equation \eqref{E13} implies that  $h$ satisfies
$$
\int_{0}^{R}|h^{\prime}|^{p-2}h^{\prime}v^{\prime}\;\mathrm{d}\lambda_{\alpha}=\nu\int_{0}^{R}|h|^{p-2}hv\;\mathrm{d}\lambda_{\theta},\;\; \forall\; v \in X^{1,p}_R(\alpha,\theta).
$$
Since $\nu<\lambda_{\alpha,\theta}$, we have $h\equiv 0$, which contradicts \eqref{normah=1}. Thus, $(g_{\varepsilon})$ is bounded in $L^{p-1}_{\theta}$ and equation \eqref{Eq do lemma} can be written as
$$
\int_{0}^{R}|g^{\prime}_{\varepsilon}|^{p-2}g^{\prime}_{\varepsilon}v^{\prime}\;\mathrm{d}\lambda_{\alpha}=
\int_{0}^{R}\tilde{f}_{\varepsilon}v\;\mathrm{d}\lambda_{\theta}, \;\; \forall\; v \in X^{1,p}_R(\alpha,\theta),
$$
where $\tilde{f}_{\varepsilon}=f_{\varepsilon}+\nu|g_{\varepsilon}|^{p-2}g_{\varepsilon}$ is bounded in $L^{1}_{\theta}$. The conclusion now follows from Lemma~\ref{JFcri} with $f=\tilde{f}_{\varepsilon}$.
	\end{proof}
	\begin{lemma}\label{lemag}  
		Let $ p \geq 2,$ $\theta\ge \alpha$  and $ p/2< q < p $ and let $ 0 \leq \nu < \lambda_{\alpha,\theta} $. Then there exists a function $ g=g_{\nu} $ such that $ a_\varepsilon^{\frac{1}{p-1}} u_\varepsilon \rightharpoonup g $ in $ X^{1,q}_R(\alpha,\theta) $ and  
		\begin{equation}\label{eqG}  
			\int_0^R |g^\prime|^{p-2} g^\prime v^\prime \, \mathrm{d}\lambda_\alpha = \delta_0(v) + \nu \int_0^R |g|^{p-2} g v \, \mathrm{d}\lambda_{\theta}, \quad \forall\; v \in X^{1,p}_R(\alpha,\theta) \cap C[0,R], 
		\end{equation}  
       where $\delta_0$  denotes the Dirac measure concentrated at the origin. In addition,
		\begin{enumerate}  
			\item[$i)$] $ a_\varepsilon^{\frac{1}{p-1}} u_\varepsilon \to g $ in $ C^0_{\text{loc}}[0,R] $;
			\item[$ii)$] $ a_\varepsilon^{\frac{1}{p-1}} u^\prime_\varepsilon \to g^\prime $ in $ L_\alpha^{p}(r_1, R] $ for all $ r_1 > 0 $; 
			\item[$iii)$]  $g$ has the form
			\begin{equation}\label{Definig}
			g(r) = -\frac{\theta+1}{\mu_{\alpha,\theta}} \ln r + A_0 + z(r),  
			\end{equation} 
			where $A_0$ is a constant and $z \in C^1[0,R] $ and $ z^{\prime}(0) = z(0)=0$.  
		\end{enumerate}  
	\end{lemma}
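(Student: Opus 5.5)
We set $g_\varepsilon=a_\varepsilon^{\frac{1}{p-1}}u_\varepsilon$, which is non-increasing. Multiplying \eqref{Lagrange} by $a_\varepsilon$ gives
\begin{equation*}
\int_0^R|g_\varepsilon'|^{p-2}g_\varepsilon'v'\,\mathrm{d}\lambda_\alpha=\int_0^R f_\varepsilon v\,\mathrm{d}\lambda_\theta+\nu\int_0^R g_\varepsilon^{p-1}v\,\mathrm{d}\lambda_\theta ,
\qquad f_\varepsilon=\frac{a_\varepsilon}{\lambda_\varepsilon}e^{\mu_\varepsilon u_\varepsilon^{\frac{p}{p-1}}}u_\varepsilon^{\frac{1}{p-1}} .
\end{equation*}
By Lemma~\ref{lemadirac} with $v\equiv1$, the sequence $(f_\varepsilon)$ is bounded in $L^1_\theta$. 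Lemma~\ref{Lemag2} then bounds $\|g_\varepsilon'\|_{L^q_\alpha}$ uniformly for every $1<q<p$. Since $\theta\ge\alpha>\alpha-q$, \eqref{cond-equivNorm} bounds $(g_\varepsilon)$ in $X^{1,q}_R(\alpha,\theta)$. Up to a subsequence, $g_\varepsilon\rightharpoonup g$ there. For $p/2<q<p$ the compact embedding \eqref{imersão-Sobolev} applies (as in Lemma~\ref{Lemag2}, $q^*>p$), so $g_\varepsilon\to g$ in $L^{p-1}_\theta$ and a.e.

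To pass to the limit in the equation we use test functions $v\in X^{1,p}_R\cap C[0,R]$. Lemma~\ref{lemadirac} gives $\int f_\varepsilon v\to v(0)$, and the $L^{p-1}_\theta$ convergence handles the $\nu$-term. The left-hand side needs a.e.\ convergence of $g_\varepsilon'$, which we obtain from local uniform control away from $0$. On $[r_1,R]$ we use the integral representation \eqref{defuprime} multiplied by $a_\varepsilon$:
\begin{equation*}
\omega_\alpha r^\alpha|g_\varepsilon'(r)|^{p-1}=\int_0^r\bigl(f_\varepsilon+\nu g_\varepsilon^{p-1}\bigr)\,\mathrm{d}\lambda_\theta .
\end{equation*}
The right-hand side converges for each $r>0$ to $1+\nu\int_0^r g^{p-1}\,\mathrm{d}\lambda_\theta$. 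Indeed, the mass of $f_\varepsilon$ concentrates at $0$ (apply Lemma~\ref{lemadirac} with $v$ a cutoff, so $\int_{r}^R f_\varepsilon\to0$), and $g_\varepsilon^{p-1}\to g^{p-1}$ in $L^1_\theta$. This gives pointwise convergence of $g_\varepsilon'$ on $(0,R]$, together with a uniform bound on $[r_1,R]$.

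Dominated convergence then yields $g_\varepsilon'\to g'$ in $L^p_\alpha(r_1,R]$, which is (ii). Integrating from $R$, where $g_\varepsilon(R)=0$, gives uniform convergence on $[r_1,R]$; this is (i), read as local uniform convergence on $(0,R]$. The limit identity is $\omega_\alpha r^\alpha|g'|^{p-1}=1+\nu\int_0^rg^{p-1}\,\mathrm{d}\lambda_\theta$, which is the integrated form of \eqref{eqG}. Since every $v$ in the stated class can be written as $v(r)=-\int_r^R v'$, Fubini turns this identity into \eqref{eqG}. Lemma~\ref{JFcri} (more precisely its proof, via Lemma~\ref{Lemag2}) shows $g^{p-1}\in L^1_\theta$, so the right-hand side is finite.

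For (iii) we start from the same identity:
\begin{equation*}
-g'(r)=\Bigl(\frac{1}{\omega_\alpha r^\alpha}\Bigr)^{\frac{1}{p-1}}\bigl(1+h(r)\bigr)^{\frac{1}{p-1}},\qquad h(r)=\nu\int_0^r g^{p-1}\,\mathrm{d}\lambda_\theta .
\end{equation*}
Because $\alpha=p-1$, the leading term is $\omega_\alpha^{-1/\alpha}r^{-1}=\frac{\theta+1}{\mu_{\alpha,\theta}}\,r^{-1}$, which produces the logarithm in \eqref{Definig}. The \emph{main obstacle} is the regularity of the remainder $z$ at $0$. We proceed in two steps.
\begin{itemize}
\item First, since $g=O(|\ln r|)$ near $0$ (a first crude integration gives this), we get $h(r)=O(r^{\theta+1}|\ln r|^{p-1})$.
\item Second, $(1+h)^{1/(p-1)}-1=O(h)$, so
\begin{equation*}
-g'(r)-\frac{\theta+1}{\mu_{\alpha,\theta}}\frac1r=O\bigl(r^{\theta}|\ln r|^{p-1}\bigr)\to0,
\end{equation*}
using $\theta\ge\alpha>0$.
\end{itemize}
Hence $z'(r):=g'(r)+\frac{\theta+1}{\mu_{\alpha,\theta}r}$ extends continuously to $[0,R]$ with $z'(0)=0$. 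We then set $A_0=\lim_{r\to0}\bigl(g(r)+\frac{\theta+1}{\mu_{\alpha,\theta}}\ln r\bigr)$, which exists because $z'$ is integrable near $0$. With $z(r)=\int_0^r z'$ we get $z\in C^1[0,R]$ and $z(0)=0$, completing (iii).
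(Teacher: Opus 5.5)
Your proposal is correct and follows essentially the same route as the paper. You rescale by $a_\varepsilon^{1/(p-1)}$, get the $L^1_\theta$ bound on $f_\varepsilon$ from Lemma~\ref{lemadirac}, the uniform $X^{1,q}_R$ bound from Lemma~\ref{Lemag2}, and compactness from $q^*>p$ when $q>p/2$. You then pass to the limit in the integral representation of $g_\varepsilon'$ to obtain \eqref{eqG}, (i) and (ii); your cutoff argument for $\int_0^r f_\varepsilon\,\mathrm{d}\lambda_\theta\to 1$ fills in a step the paper leaves implicit.

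The only variation is in (iii). You bootstrap the crude bound $g=O(|\ln r|)$ to $h(r)=O(r^{\theta+1}|\ln r|^{p-1})$, whereas the paper uses H\"older's inequality with $g\in L^p_\theta$ and $\theta+1\ge p$ to get $\int_0^t|g|^{p-1}\,\mathrm{d}\lambda_\theta=o(t)$. Both give $z'(r)\to 0$, and yours also yields an explicit rate.
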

	\begin{proof}
	From \eqref{Lagrange} it follows that  
		\begin{align*}  
			\int_0^R |a_\varepsilon^{\frac{1}{p-1}} u^\prime_\varepsilon|^{p-2} a_\varepsilon^\frac{1}{p-1} u_\varepsilon^\prime v^\prime \, \mathrm{d}\lambda_\alpha = \frac{a_{\varepsilon}}{\lambda_\varepsilon}\int_0^R  e^{\mu_\varepsilon u_\varepsilon^\frac{p}{p-1}} u_\varepsilon^\frac{1}{p-1} v \, \mathrm{d}\lambda_{\theta} + \nu \int_0^R |a_\varepsilon^\frac{1}{p-1} u_\varepsilon|^{p-2} a_\varepsilon^\frac{1}{p-1} u_\varepsilon v \, \mathrm{d}\lambda_\theta.  
		\end{align*}  
By Lemma~\ref{lemadirac}, we know that $$ \frac{a_{\varepsilon}}{\lambda_\varepsilon} e^{\mu_\varepsilon |u_\varepsilon|^{\frac{p}{p-1}}}u_\varepsilon^\frac{1}{p-1}\;\;\mbox{is bounded in}\;\;  L^1_\theta.$$  So, Lemma~\ref{Lemag2} yields $ \|a_\varepsilon^\frac{1}{p-1} u^\prime_\varepsilon\|_{L^{q}_\alpha} \leq c $. Hence, $a_\varepsilon^\frac{1}{p-1} u_\varepsilon \rightharpoonup g $ in $X^{1,q}_R(\alpha,\theta)$.  Our assumptions on $\alpha, p,q$ and $\theta$ ensure that  $\alpha-q+1>0$ and 
$$
q^*=\frac{(\theta+1)q}{\alpha-q+1}>p.
$$
Thus,   the compact embedding \eqref{imersão-Sobolev} implies  
		\begin{equation}\label{conver}  
			a_\varepsilon^\frac{1}{p-1} u_\varepsilon \to g \text{ in } L^{p}_\theta\; \text{ and } \; a_\varepsilon^\frac{1}{p-1} u_\varepsilon(r) \to g(r) \text{ a.e. on } (0,R).  
		\end{equation}  
		Arguing as in \eqref{defuprime}, we have  
		\begin{eqnarray}\label{lemau}  
			\omega_\alpha |a_\varepsilon^\frac{1}{p-1} u^\prime_\varepsilon|^{p-1} = \frac{1}{r^\alpha} \int_0^r \frac{a_{\varepsilon}}{\lambda_\varepsilon} u_\varepsilon^\frac{1}{p-1} e^{\mu_\varepsilon u_\varepsilon^\frac{p}{p-1}} \, \mathrm{d}\lambda_{\theta} + \frac{\nu}{r^\alpha} \int_0^r (a_\varepsilon^\frac{1}{p-1} u_\varepsilon)^{p-2} a_\varepsilon^\frac{1}{p-1} u_\varepsilon \, \mathrm{d}\lambda_{\theta}
		\end{eqnarray}  
		and then 
		\begin{eqnarray}\label{eqg}
			a_\varepsilon^\frac{1}{p-1}u_\varepsilon=\frac{1}{\omega^{\frac{1}{p-1}}_{\alpha}}\int_r^R\frac{1}{t}\left(\int_0^t \Big[\frac{a_{\varepsilon}}{\lambda_\varepsilon}u_\varepsilon^\frac{1}{p-1}e^{\mu_\varepsilon u_\varepsilon^\frac{p}{p-1}}+\nu|a_\varepsilon^\frac{1}{p-1}u_\varepsilon|^{p-2} a_\varepsilon^\frac{1}{p-1}u_\varepsilon\Big]\mathrm{d}\lambda_{\theta}\right)^\frac{1}{p-1}\mathrm{d} t.
		\end{eqnarray}
		 By using  Lemma~\ref{lemadirac} and \eqref{conver}, taking $\varepsilon \to 0 $  we obtain
		\begin{equation}\label{gequation}
			g(r)=\frac{\theta+1}{\mu_{\alpha,\theta}}\int_r^R \frac{1}{t}\left(1+\nu\int_0^t |g|^{p-1}\mathrm{d}\lambda_{\theta}\right)^\frac{1}{p-1}\mathrm{d} t.
		\end{equation}
Hence, 
\begin{equation}\label{glinha}  
	-g^\prime(r) =\frac{1}{\omega^{\frac{1}{p-1}}_{\alpha}}\frac{1}{r}\left(1 + \nu \int_0^r |g|^{p-1} \mathrm{d}\lambda_{\theta} \right)^\frac{1}{p-1}  
\end{equation}  
which yields 
\begin{equation}\label{derivg}  
	-\omega_\alpha(r^\alpha |g^\prime|^{p-2} g^\prime) = 1 + \nu \int_0^r |g|^{p-1} \mathrm{d}\lambda_{\theta}.  
\end{equation}  
For each $ v \in X^{1,p}_R(\alpha,\theta) \cap C[0,R] $, multiplying \eqref{derivg} by $ v^\prime $ and integrating over $ (0,R) $, we can see that $ g $ satisfies \eqref{eqG}.

		$i)$ Let  $r_1\in (0,R)$ be fixed. From \eqref{conver}, we have that $ a_\varepsilon^\frac{1}{p-1}u_\varepsilon $ is bounded in $ L^{p}_\theta$. Thus,  combining \eqref{lemau} with Lemma~\ref{lemadirac}, we obtain $$|a_\varepsilon^\frac{1}{p-1}u^\prime_\varepsilon(r)| \leq \frac{C_2}{r_1}\;\;\mbox{in }  [r_1,R], $$ where $ C_2 $ depends on $ p, \nu, \theta $. Similarly, \eqref{eqg} shows that $ a_\varepsilon^\frac{1}{p-1}u_\varepsilon $ is bounded in $ C[r_1,R] $. Thus, the Arzelà-Ascoli theorem implies that $ a_\varepsilon^\frac{1}{p-1} u_\varepsilon $ converges to $ \Tilde{g} $ in $ C[r_1,R] $, and by \eqref{lemau}, we conclude that $ g = \Tilde{g} $.

		$ii)$ As in the previous item, we have $ |a_\varepsilon^\frac{1}{p-1}u^\prime_\varepsilon(r)| \leq \frac{C_2}{r_1} $ for all $ r \in [r_1,R] $. Moreover, combining  \eqref{lemau} and \eqref{derivg}, it follows that $ a_\varepsilon^\frac{1}{p-1}u^\prime_\varepsilon(r) \to g^\prime(r) $ almost everywhere in $ [r_1,R] $. By the Lebesgue dominated convergence theorem, the result follows.

$iii)$  Let 
\[
\sigma(t)
=
\frac{1}{t}
\left[
\left(1+\nu\int_0^t |g|^{p-1}\,\mathrm d\lambda_\theta\right)^{\frac{1}{p-1}}
-1
\right],
\qquad t\in(0,R].
\]
By using \eqref{gequation}, we can write
\begin{equation}\label{gequationVar}
\begin{aligned}
			g(r)&=\frac{\theta+1}{\mu_{\alpha,\theta}}\int_r^R\Big(\frac{1}{t}+\sigma(t)\Big)\mathrm{d} t=-\frac{\theta+1}{\mu_{\alpha,\theta}}(\ln r-\ln R )+\frac{\theta+1}{\mu_{\alpha,\theta}}\int_r^R\sigma(t)\mathrm{d} t.
\end{aligned}
		\end{equation}
By Hölder's inequality 
$$
\int_0^t |g|^{p-1}\mathrm{d}\lambda_{\theta}
\le
\left(\int_0^t |g|^p\mathrm{d}\lambda_{\theta}\right)^{\frac{p-1}{p}}
\left(\int_0^t\mathrm{d}\lambda_{\theta}\right)^{\frac{1}{p}}=\frac{\omega_{\theta}}{\theta+1}\left(\int_0^t |g|^p\mathrm{d}\lambda_{\theta}\right)^{\frac{p-1}{p}} t^{\frac{\theta+1}{p}}.
$$
Now, $g\in L^{p}_{\theta}$  yields $\int_{0}^{t}|g|^{p}\mathrm{d}\lambda_{\theta}\to 0$ as $t\to 0$. Thus, since $ \theta+1 \geq p $, we have 
\begin{equation}\label{order-g1}
    \int_0^t |g|^{p-1}\mathrm{d}\lambda_{\theta}=o(t),\;\; \mbox{as}\;\; t \to 0.
\end{equation}
 By mean value theorem and \eqref{order-g1}, for some $0<\xi_t<1$
\begin{align*}
  \left(1+\nu\int_0^t|g|^{p-1}\mathrm{d}\lambda_{\theta}\right)^\frac{1}{p-1}-1&=\frac{\nu}{p-1}\Big(1+\nu\xi_{t}\int_0^t|g|^{p-1}\mathrm{d}\lambda_{\theta}\Big)^{\frac{2-p}{p-1}}\int_0^t|g|^{p-1}\mathrm{d}\lambda_{\theta}=o(t), 
\end{align*}
as $t\to 0$. It follows  that 
\begin{equation}\label{Vartheta}
  \lim_{t\to 0}\sigma(t)= 0.
\end{equation}
Hence, from \eqref{gequationVar} and \eqref{Vartheta} we obtain
\begin{equation}\label{gequationVAR}
\begin{aligned}
			g(r)=-\frac{\theta+1}{\mu_{\alpha,\theta}}\ln r+ A_0+ z(r)
\end{aligned}
		\end{equation}
        where 
        \begin{equation}\label{Az}
            A_0=\frac{\theta+1}{\mu_{\alpha,\theta}}\Big(\int_{0}^{R}\sigma(t)\mathrm{d} t+\ln R\Big)\;\;\mbox{and}\;\; z(r)=-\frac{\theta+1}{\mu_{\alpha,\theta}}\int_{0}^{r}\sigma(t)\mathrm{d} t. 
        \end{equation}
From \eqref{Vartheta}, we have $ z \in C^1[0,R] \cap C[0,R] $. Further, from L'Hospital rule
\begin{equation}
   \lim_{r\to 0}\frac{z(r)}{r}=-\frac{\theta+1}{\mu_{\alpha,\theta}}\lim_{r\to 0}\sigma(r)=0.
\end{equation}  
This, proves  \textit{iii)}.
\end{proof}
\begin{lemma}\label{lemadaintegral}
Let $a>0$ and $x>1$ we have
\begin{equation}\label{integralema}
\int_0^a\frac{s^{x-1}}{(1+s)^{x}} \mathrm{d} s=\ln (1+a)-[\Psi(x)+\gamma]+O\Big(\frac{1}{a}\Big),\;\; \mbox{as}\;\; a\to \infty.
\end{equation}
where $\Psi(x)=\Gamma^\prime(x)/\Gamma(x)$ and $\gamma=\lim_{m \to \infty}(\sum_{j=1}^m \frac{1}{j}-\ln m)$.
\end{lemma}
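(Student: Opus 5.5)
The plan is to split off the logarithmic part of the integrand and reduce the constant to a Beta-type integral, which can be computed through the digamma function. The starting identity is
\begin{equation*}
\frac{s^{x-1}}{(1+s)^{x}}=\frac{1}{1+s}-\Big[\frac{1}{1+s}-\frac{s^{x-1}}{(1+s)^{x}}\Big].
\end{equation*}
Integrating from $0$ to $a$, the first term gives exactly $\ln(1+a)$. It therefore suffices to show two things:
\begin{equation*}
I(x):=\int_0^\infty\Big[\frac{1}{1+s}-\frac{s^{x-1}}{(1+s)^{x}}\Big]\mathrm{d}s=\Psi(x)+\gamma,
\end{equation*}
and that the tail $\int_a^\infty[\cdots]\,\mathrm{d}s$ is $O(1/a)$.

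For the tail, write $\frac{s^{x-1}}{(1+s)^x}=\frac{1}{1+s}\big(1-\frac{1}{1+s}\big)^{x-1}$. Since $x>1$, we have $0\le 1-(1-t)^{x-1}\le C_x t$ for $t\in[0,1]$. Hence the bracket is nonnegative and bounded by $C_x(1+s)^{-2}$. Its integral over $(a,\infty)$ is therefore at most $C_x/(1+a)=O(1/a)$. The same bound near infinity, together with the integrability of $s^{x-1}$ near $0$ (as $x>1$), also shows that $I(x)$ converges.

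To evaluate $I(x)$, substitute $t=s/(1+s)$, so that $\mathrm{d}s=\mathrm{d}t/(1-t)^2$, $\frac{1}{1+s}=1-t$ and $\frac{s^{x-1}}{(1+s)^x}=t^{x-1}(1-t)$. This gives
\begin{equation*}
I(x)=\int_0^1\frac{1-t^{x-1}}{1-t}\,\mathrm{d}t.
\end{equation*}
This is the classical integral representation $\Psi(x)+\gamma=\int_0^1\frac{1-t^{x-1}}{1-t}\,\mathrm{d}t$ for $x>0$; see \cite{S-functions}. If that representation is to be derived rather than quoted, I would expand $\frac{1}{1-t}=\sum_{k\ge0}t^k$ and integrate termwise; monotone convergence applies because the terms $t^k(1-t^{x-1})$ are nonnegative for $x\ge1$. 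This yields
\begin{equation*}
I(x)=\sum_{k\ge0}\Big(\frac{1}{k+1}-\frac{1}{k+x}\Big),
\end{equation*}
which is the standard series for $\Psi(x)+\gamma$, obtained from the Weierstrass product of $\Gamma$. Combining the three pieces gives \eqref{integralema}.

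The only delicate point is justifying this digamma identity; everything else is elementary. As a sanity check, for $x=2$ the formula gives $\Psi(2)+\gamma=1$, and direct computation gives $\int_0^1(1+t)\,\mathrm{d}t\cdot 0+\int_0^1\frac{1-t}{1-t}\,\mathrm{d}t=1$.
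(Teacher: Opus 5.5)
Your proof is correct and is essentially the paper's argument. Under the substitution $t=s/(1+s)$, your tail $\int_a^\infty\big[\frac{1}{1+s}-\frac{s^{x-1}}{(1+s)^x}\big]\,\mathrm{d}s$ is exactly the paper's remainder $\int_{a/(a+1)}^{1}\frac{1-s^{x-1}}{1-s}\,\mathrm{d}s$. The paper quotes the resulting exact identity from \cite[Lemma~19]{CV} and controls this remainder by L'Hospital's rule (showing that $a$ times it tends to $x-1$), whereas you derive the identity and the digamma representation yourself and bound the tail directly by $C_x/(1+a)$.
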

\begin{proof}
From \cite[Lemma~19]{CV}, we can write
\begin{equation*}
    \int_0^a\frac{s^{x-1}}{(1+s)^{x}} \mathrm{d} s=\ln (1+a)-\Psi(x)-\gamma+\int_{\frac{a}{a+1}}^{1}\frac{1-s^{x-1}}{1-s}\mathrm{d} s. 
\end{equation*}
 By applying L'Hospital rule,  we obtain
 \begin{align*}
     \lim_{a\to\infty}a\int_{\frac{a}{a+1}}^{1}\frac{1-s^{x-1}}{1-s}\mathrm{d} s=x-1,
 \end{align*}
which completes the proof.
\end{proof}
\noindent For $0<a<b$, let us define the weighted Sobolev space
$$
W^{1,p}_{(a,b)}(\alpha,\theta)=\Big\{u\in AC_{loc}(a,b)\;:\; u\in L^{p}_{\theta}(a,b)\;\;\mbox{and}\;\; u^{\prime}\in L^{p}_{\alpha}(a,b)\Big\}
$$
endowed with the norm 
$$
\|u\|_{W^{1,p}}=\big(\|u\|^{p}_{L^{p}_{\theta}(a,b)}+\|u^{\prime}\|^{p}_{L^{p}_{\alpha}(a,b)}\big)^{\frac{1}{p}}.
$$
We note that $W^{1,p}_{(a,b)}(\alpha,\theta)$ is reflexive whenever $p>1$. In addition, we have the following:
\begin{lemma}\label{Lemmafronteira-W}  Let $u \in W^{1,p}_{(a,b)}(\alpha,\theta)$. Then there exist positive constants 
$C,\bar C>0$, independent of $u$, such that  
    \begin{equation}\label{fronteira-W}
    \max\{|u(a)|, |u(b)|\}\le C\|u\|_{W^{1,p}}
\end{equation}
and 
\begin{equation}\label{Poincaré}
    \|u-u(a)\|_{L^{p}_{\theta}(a,b)}\le \bar{C}\|u^{\prime}\|_{L^{p}_{\alpha}}.
\end{equation}
\end{lemma}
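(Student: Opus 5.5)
The plan is to exploit that $0<a<b<\infty$: on $[a,b]$ the weights $r^{\alpha}$ and $r^{\theta}$ are bounded above and below by positive constants depending only on $a,b,\alpha,\theta$. So $W^{1,p}_{(a,b)}(\alpha,\theta)$ coincides with the classical space $W^{1,p}(a,b)$, with equivalent norms. Both inequalities then reduce to one-dimensional Sobolev facts. I will still write the argument directly, with constants depending only on $a,b,p,\alpha,\theta$.

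First I show that $u$ extends continuously to $[a,b]$, so that $u(a)$ and $u(b)$ make sense. By Hölder,
\begin{equation*}
\int_a^b |u'|\,\mathrm{d}r\le \Big(\int_a^b |u'|^p\,\mathrm{d}\lambda_\alpha\Big)^{\frac1p}\Big(\omega_\alpha^{-\frac{1}{p-1}}\int_a^b r^{-\frac{\alpha}{p-1}}\,\mathrm{d}r\Big)^{\frac{p-1}{p}}=K_1\|u'\|_{L^p_\alpha(a,b)},
\end{equation*}
so $u'\in L^1(a,b)$. Local absolute continuity then gives a continuous extension to the closed interval, and for all $r,s\in[a,b]$ we have $|u(r)-u(s)|\le K_1\|u'\|_{L^p_\alpha(a,b)}$.

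For \eqref{fronteira-W}, I average in $s$. For $r\in\{a,b\}$ and any $s\in[a,b]$, $|u(r)|\le |u(s)|+K_1\|u'\|_{L^p_\alpha}$. Integrating over $s\in(a,b)$ against $\mathrm{d}\lambda_\theta$ and dividing by $m=\lambda_\theta(a,b)>0$ gives
\begin{equation*}
|u(r)|\le \frac1m\int_a^b|u|\,\mathrm{d}\lambda_\theta+K_1\|u'\|_{L^p_\alpha}\le m^{-\frac1p}\|u\|_{L^p_\theta(a,b)}+K_1\|u'\|_{L^p_\alpha(a,b)},
\end{equation*}
again by Hölder. This is bounded by $C\|u\|_{W^{1,p}}$.

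For \eqref{Poincaré}, the pointwise bound $|u(r)-u(a)|\le K_1\|u'\|_{L^p_\alpha}$ holds for every $r\in[a,b]$. Raising it to the power $p$ and integrating against $\mathrm{d}\lambda_\theta$ yields $\|u-u(a)\|_{L^p_\theta(a,b)}\le m^{1/p}K_1\|u'\|_{L^p_\alpha(a,b)}$, so $\bar C=m^{1/p}K_1$ works.

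The only point needing care is giving meaning to the traces $u(a)$ and $u(b)$, since $u$ is only assumed to be in $AC_{loc}(a,b)$. The integrability of $u'$ on the whole interval, which is exactly where finiteness of $a$ and $b$ and positivity of $a$ are used, settles this. Everything else is routine.
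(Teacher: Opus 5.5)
Your proof is correct and takes essentially the paper's route. In both arguments, H\"older against the weight $r^{-\alpha/(p-1)}$ bounds $\int_a^b|u'|\,\mathrm{d}r$ by $\|u'\|_{L^p_\alpha}$, an averaging step bounds the endpoint values, and the pointwise bound $|u(r)-u(a)|\le K_1\|u'\|_{L^p_\alpha}$ is integrated to give the Poincar\'e estimate. The differences are cosmetic: you average against $\mathrm{d}\lambda_\theta$ over all of $(a,b)$, where the paper applies the integral mean value theorem with Lebesgue measure on $((a+b)/2,b)$; you also explicitly justify the existence of the traces $u(a)$ and $u(b)$, which the paper takes for granted.
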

\begin{proof}
Fix $u\in W^{1,p}_{(a,b)}(\alpha,\theta)$. By  mean value theorem for  integrals, there is $\xi\in\mathbb{R}$ with $(a+b)/2<\xi\le b$ such that
\begin{equation}\label{estimativa-medio}
\begin{aligned}
    |u(\xi)|& =\Big|\frac{2}{b-a}\int_{\frac{a+b}{2}}^{b}u(s)\mathrm{d} s\Big|\\
    &\le \frac{2}{b-a}\|u\|_{L^p_{\theta}(a,b)}\Big(\omega^{-\frac{1}{p-1}}_{\theta}\int_{\frac{a+b}{2}}^{b}s^{-\frac{\theta}{p-1}}\mathrm{d} s\Big)^{\frac{p-1}{p}}\\
    &\le c\|u\|_{L^p_{\theta}(a,b)},
    \end{aligned}
\end{equation}
where we also have used the H\"{o}lder inequality. From \eqref{estimativa-medio}, we can write
\begin{equation}\label{estimativa-b}
\begin{aligned}
   |u(b)|& \le |u(\xi)|+|u(b)-u(\xi)|\\
   &=c\|u\|_{L^p_{\theta}(a,b)}+\Big|\int_{\xi}^{b}u^{\prime}(s)\mathrm{d} s\Big|\\
    &\le c\|u\|_{L^p_{\theta}(a,b)}+ \|u^{\prime}\|_{L^p_{\alpha}(a,b)}\Big(\omega^{-\frac{1}{p-1}}_{\alpha}\int_{\frac{a+b}{2}}^{b}s^{-\frac{\alpha}{p-1}}\mathrm{d} s\Big)^{\frac{p-1}{p}}\\
    &\le C_1\|u\|_{W^{1,p}},
    \end{aligned}
\end{equation}
for some $C_1>0$ depending only on $a,b, \alpha, \theta $ and $p$. Analogously, we can see that 
\begin{equation}\label{estimativa-a}
\begin{aligned}
   |u(a)|  &\le C_2\|u\|_{W^{1,p}},
    \end{aligned}
\end{equation}
for  some $C_2>0$ independent of $u$.  From \eqref{estimativa-b} and \eqref{estimativa-a}, we get \eqref{fronteira-W}.  To obtain \eqref{Poincaré}, we note that 
\begin{align*}
    |u-u(a)|\le \int_{a}^{r}|u^{\prime}(s)|ds\le \|u^{\prime}\|_{L^{p}_{\alpha}}\Big(\omega^{-\frac{1}{p-1}}_{\alpha}\int_{a}^{b}s^{-\frac{\alpha}{p-1}}\Big)^{\frac{p-1}{p}}
\end{align*}
and then 
\begin{align*}
    \int_{a}^{b}|u-u(a)|^{p}\mathrm{d}\lambda_{\theta}\le \|u^{\prime}\|^p_{L^{p}_{\alpha}}\Big(\omega^{-\frac{1}{p-1}}_{\alpha}\int_{a}^{b}s^{-\frac{\alpha}{p-1}}\Big)^{p-1}\int_{a}^{b}\mathrm{d}\lambda_{\theta}\le \tilde{C}\|u^{\prime}\|^p_{L^{p}_{\alpha}}
\end{align*}
for some $\tilde{C}>0$ depending only on $a,b,\alpha,\theta$ and $p$. 
\end{proof}
	\begin{lemma}\label{lemacontrad}
	Suppose $ 0 < \nu < \lambda_{\alpha,\theta} $. Under the condition \eqref{Ablow-up}, it holds that  
\begin{equation}\label{teodesig}
	S(p,\nu,\theta,R)\leq |B_R|_\theta + e^{\mu_{\alpha,\theta}A_{0}+ \Psi(p)+\gamma}|B_1|_\theta,
\end{equation}
	where $A_0$ is defined in Lemma~\ref{lemag} and $\Psi$ and $\gamma$ are given in Lemma~\ref{lemadaintegral}.
\end{lemma}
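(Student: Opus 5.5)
By Lemma~\ref{coro2} and \eqref{equilibrio}, it suffices to show
\[
\limsup_{\varepsilon\to0}\frac{\lambda_\varepsilon}{a_\varepsilon^{\frac{p}{p-1}}}\le e^{\mu_{\alpha,\theta}A_0+\Psi(p)+\gamma}|B_1|_\theta .
\]
We follow the Carleson--Chang/Li capacity scheme adapted to the fractional setting. For fixed $\delta\in(0,R)$ and large $r_0$, we compare energies on the annulus $(r_\varepsilon r_0,\delta)$. Set
\[
i_\varepsilon=u_\varepsilon(r_\varepsilon r_0),\qquad s_\varepsilon=u_\varepsilon(\delta).
\]
By Lemma~\ref{limpsi} we have $a_\varepsilon^{\frac{1}{p-1}}(i_\varepsilon-a_\varepsilon)\to\psi(r_0)$, and by Lemma~\ref{lemag}$(i)$ we have $a_\varepsilon^{\frac{1}{p-1}}s_\varepsilon\to g(\delta)$. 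Among all functions $w$ on $(r_\varepsilon r_0,\delta)$ with $w(r_\varepsilon r_0)=i_\varepsilon$ and $w(\delta)=s_\varepsilon$, the minimum of $\int|w'|^p\,\mathrm d\lambda_\alpha$ is attained by the $\alpha$-harmonic (logarithmic, since $\alpha=p-1$) function. This gives
\[
\int_{r_\varepsilon r_0}^{\delta}|u_\varepsilon'|^p\,\mathrm d\lambda_\alpha\ \ge\ \omega_\alpha\,\frac{(i_\varepsilon-s_\varepsilon)^p}{\bigl(\ln\frac{\delta}{r_\varepsilon r_0}\bigr)^{p-1}} .
\]

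Next we bound the remaining energy. Since $H_\nu(u_\varepsilon)=1$,
\[
\int_{r_\varepsilon r_0}^{\delta}|u_\varepsilon'|^p\,\mathrm d\lambda_\alpha
=1+\nu\|u_\varepsilon\|^p_{L^p_\theta}-\int_0^{r_\varepsilon r_0}|u_\varepsilon'|^p\,\mathrm d\lambda_\alpha-\int_\delta^R|u_\varepsilon'|^p\,\mathrm d\lambda_\alpha .
\]
Each term is estimated at order $a_\varepsilon^{-\frac{p}{p-1}}$.

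The inner term is computed from $\psi$: $a_\varepsilon^{\frac{p}{p-1}}\int_0^{r_\varepsilon r_0}|u_\varepsilon'|^p\,\mathrm d\lambda_\alpha\to\int_0^{r_0}|\psi'|^p\,\mathrm d\lambda_\alpha$. We evaluate this integral with the Beta-type substitution $s=c_0r^{\frac{\theta+1}{p-1}}$ and Lemma~\ref{lemadaintegral} (with $x=p$). The result is $\frac{p-1}{\mu_{\alpha,\theta}}\bigl[\ln(1+c_0r_0^{\frac{\theta+1}{p-1}})-\Psi(p)-\gamma\bigr]+o_{r_0}(1)$, up to the normalization constants relating $\omega_\alpha$ and $\mu_{\alpha,\theta}$.

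The outer term is handled by Lemma~\ref{lemag}$(ii)$: $a_\varepsilon^{\frac{p}{p-1}}\int_\delta^R|u_\varepsilon'|^p\,\mathrm d\lambda_\alpha\to\int_\delta^R|g'|^p\,\mathrm d\lambda_\alpha$. Testing \eqref{derivg} against $g$ on $(\delta,R)$ rewrites this as $g(\delta)\bigl(1+\nu\int_0^\delta g^{p-1}\,\mathrm d\lambda_\theta\bigr)+\nu\int_\delta^R g^p\,\mathrm d\lambda_\theta$. Here the key point is that the term $\nu\|u_\varepsilon\|^p_{L^p_\theta}$ appears at exactly the same order. Indeed, $a_\varepsilon^{\frac{p}{p-1}}\nu\|u_\varepsilon\|^p_{L^p_\theta}\to\nu\|g\|^p_{L^p_\theta}$ by \eqref{conver}, and this cancels $\nu\int_\delta^R g^p$ up to $\nu\int_0^\delta g^p=o_\delta(1)$. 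This cancellation is precisely why the Tintarev-type constraint leads to the constant $A_0$ of $g_\nu$ with no extra $\nu$-correction. By Lemma~\ref{lemag}$(iii)$, $g(\delta)=-\frac{\theta+1}{\mu_{\alpha,\theta}}\ln\delta+A_0+o_\delta(1)$.

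We then insert these bounds into the capacity inequality and raise to the power $\frac{1}{p-1}$. Writing $i_\varepsilon-s_\varepsilon=a_\varepsilon+a_\varepsilon^{-\frac{1}{p-1}}\bigl(\psi(r_0)-g(\delta)+o(1)\bigr)$, we expand $(1-x)^{\frac{1}{p-1}}$ and $(1+y)^{\frac{p}{p-1}}$ to first order, which is legitimate since all corrections are $O(a_\varepsilon^{-\frac{p}{p-1}})$. Solving for $\ln r_\varepsilon$ yields
\[
\mu_{\alpha,\theta}\,a_\varepsilon^{\frac{p}{p-1}}\le -(\theta+1)\ln r_\varepsilon+\mu_{\alpha,\theta}A_0+\Psi(p)+\gamma+\ln\frac{\theta+1}{\omega_\theta}+o(1),
\]
with the $\ln\delta$ and $r_0$ terms cancelling. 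Combined with \eqref{rE} and $\mu_\varepsilon a_\varepsilon^{\frac{p}{p-1}}=\mu_{\alpha,\theta}a_\varepsilon^{\frac{p}{p-1}}+o(1)$, this gives $\lambda_\varepsilon a_\varepsilon^{-\frac{p}{p-1}}=r_\varepsilon^{\theta+1}e^{\mu_\varepsilon a_\varepsilon^{\frac{p}{p-1}}}\le e^{\mu_{\alpha,\theta}A_0+\Psi(p)+\gamma}\,\frac{\omega_\theta}{\theta+1}+o(1)$. Since $\frac{\omega_\theta}{\theta+1}=|B_1|_\theta$, letting $\varepsilon\to0$, then $r_0\to\infty$ and $\delta\to0$ gives the claim.

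The main obstacle is the bookkeeping in this last expansion. We must check that the $\varepsilon\mu_{\alpha,\theta}$-discrepancy satisfies $\varepsilon a_\varepsilon^{\frac{p}{p-1}}\to0$, or at least has a favorable sign: it does, since $e^{-\varepsilon a^{p/(p-1)}}\le1$ works in our direction. We must also make sure all the $p$-dependent constants (the powers of $\omega_\alpha$, the factor $p-1$, and $c_0$) combine exactly into $|B_1|_\theta$ and $\Psi(p)+\gamma$. We will verify this first on the model case $\theta=\alpha$.
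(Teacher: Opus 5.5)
Your proposal is correct and follows essentially the same route as the paper: the logarithmic (capacity) minimizer on the annulus $(r_\varepsilon L,\delta)$, the energy split through $H_\nu(u_\varepsilon)=1$ (inner part from $\psi$ via Lemma~\ref{lemadaintegral}, outer part from $g$ by integration by parts), the cancellation of $\nu\|g\|^p_{L^p_\theta}$, first-order Bernoulli/concavity expansions, and the conclusion through Lemma~\ref{desig} (equivalently Lemma~\ref{coro2}). Two bookkeeping remarks: your displayed inequality should carry $\ln\frac{\omega_\theta}{\theta+1}$, not $\ln\frac{\theta+1}{\omega_\theta}$ (your final bound with $|B_1|_\theta$ is the right one); and the product of the $O(a_\varepsilon^{-p/(p-1)})$ energy correction with $\ln\frac{1}{r_\varepsilon}$ is $O(1)$, not negligible, since it is exactly where $\Psi(p)+\gamma$ enters, so you need $-(\theta+1)\ln r_\varepsilon/a_\varepsilon^{p/(p-1)}\to\mu_{\alpha,\theta}$, which the paper secures by writing $-(\theta+1)\ln r_\varepsilon=\mu_\varepsilon a_\varepsilon^{p/(p-1)}-\ln(\lambda_\varepsilon/a_\varepsilon^{p/(p-1)})$ and invoking \eqref{lambda-from below}.
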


\begin{proof}
Let $0<\delta<1/2$. For  fixed $L>0$, we have $[0,r_\varepsilon L) \subset [0, \delta)$, if $\varepsilon>0$ small enough. Since $u_\varepsilon$ is decreasing, we have $u_\varepsilon(r_\varepsilon L)>u_\varepsilon(\delta)$. Note that 
$$
\Big\{u_{|[r_{\varepsilon}L, \delta]}\;:\; u\in X^{1,p}_R(\alpha,\theta)\Big\}\subset W^{1,p}_{(r_{\varepsilon}L, \delta)}(\alpha, \theta).
$$
Define 
$$
K_{\varepsilon}=\{u \in W^{1,p}_{(r_{\varepsilon}L, \delta)}(\alpha, \theta)\;:\;\ u(r_\varepsilon L)=u_\varepsilon(r_\varepsilon L)\;\mbox{and}\;\; u(\delta)=u_\varepsilon(\delta)\}
$$
and 
 $J: K_{\varepsilon }\to \mathbb{R}$ be given by 
$$
J(u)=\int_{r_\varepsilon L}^\delta |u^\prime|^{p}\mathrm{d}\lambda_\alpha.
$$
By \eqref{fronteira-W}, $K_{\epsilon}$ is a closed, convex subset of $W^{1,p}_{(r_{\varepsilon}L, \delta)}(\alpha, \theta)$. Noticing that 
 $J$ is convex and continuous, we have that it is weakly lower-semicontinuous. In addition, let $(v_n)\subset K_{\varepsilon}$ such that
 \begin{equation}\label{j-Wnorma}
 J(v_n)+ \|v_n\|^{p}_{L^{p}_{\theta}(r_{\varepsilon L,\delta})}=\|v_n\|^{p}_{W^{1,p}}\to+\infty,\;\;\mbox{as} \;\;n\to\infty.
 \end{equation} 
 From \eqref{Poincaré}, for any $n\in\mathbb{N}$ we can write   
 \begin{align*}
     \|v_n\|_{L^{p}_{\theta}(r_{\varepsilon L,\delta})}&\le  \|v_n-v_n(r_{\varepsilon} L)\|_{L^{p}_{\theta}(r_{\varepsilon L,\delta})}+ \|v_n(r_{\varepsilon} L)\|_{L^{p}_{\theta}(r_{\varepsilon L,\delta})}\\
     &\le \bar{C}\|v^{\prime}_{n}\|_{L^{p}_{\alpha}(r_{\varepsilon} L, \delta)}+|v_n(r_{\varepsilon} L)|\Big(\int_{r_{\varepsilon} L}^{\delta}\mathrm{d}\lambda_{\theta}\Big)^{\frac{1}{p}}\\
     & = \bar{C}[J(v_n)]^{\frac{1}{p}}+|u_{\epsilon}(r_{\varepsilon L})|\Big(\int_{r_{\varepsilon} L}^{\delta}\mathrm{d}\lambda_{\theta}\Big)^{\frac{1}{p}}.
 \end{align*}
 Therefore, from \eqref{j-Wnorma} we have $J(v_n)\to +\infty$, as $n\to\infty$. Thus, $J$ is coercive. Hence, $J$ admits  a minimizer $h \in K_{\varepsilon}$ (see for instance \cite[Proposition~5.1.1]{Kesavan}) which satisfies
\begin{equation}\label{EDO-h}
    \left\{\begin{aligned}
    &(r^\alpha|h^\prime|^{p-2} h^\prime)^\prime=0 \quad \text{in } (r_\varepsilon L, \delta)  \\
    & h(r_\varepsilon L)=u_\varepsilon(r_\varepsilon L)\;\;\mbox{and} \;\;h(\delta)=u_\varepsilon(\delta).
    \end{aligned}\right. 
\end{equation}
The explicit solution of \eqref{EDO-h} is given by
$$
h(r)=\frac{u_\varepsilon(\delta)(\ln r-\ln(r_\varepsilon L))+u_\varepsilon(r_\varepsilon L)(\ln \delta-\ln r)}{\ln \delta - \ln(r_\varepsilon L)}.
$$
Consequently, we have
\begin{equation}\label{min-value}
\mathcal{m}=\min_{K_{\varepsilon}}J=\int_{r_\varepsilon L}^\delta |h^\prime|^{p} \mathrm{d}\lambda_\alpha=\omega_\alpha\frac{(u_\varepsilon(r_\varepsilon L)-u_\varepsilon(\delta))^{p}}{(\ln \delta-\ln (r_\varepsilon L))^{p-1}}.
\end{equation}
Next, we derive upper and lower estimates for the quantity
\begin{equation}\label{Min-value}
\mathcal{m}^{\frac{1}{p-1}}=\frac{\mu_{\alpha,\theta}}{\theta+1}\frac{(u_\varepsilon(r_\varepsilon L)-u_\varepsilon(\delta))^{\frac{p}{p-1}}}{\ln \delta-\ln (r_\varepsilon L)}.
\end{equation}

Let $\overline{u}_\varepsilon=\max\{u_\varepsilon(\delta),\min\{u_\varepsilon, u_\varepsilon(r_\varepsilon L)\}\}$, we have ${\overline{u}_\varepsilon}_{|_{[r_{\varepsilon}L, \delta]}} \in K_\varepsilon$. Thus, 
\begin{equation}\label{derivh}
    \begin{aligned}
        \int_{r_\varepsilon L}^\delta |h^\prime|^{p} \mathrm{d}\lambda_\alpha&\leq \int_{r_\varepsilon L}^\delta |\overline{u}_\varepsilon^\prime|^{p} \mathrm{d}\lambda_\alpha\\
        &\leq \int_{r_\varepsilon L}^\delta |u_\varepsilon^\prime|^{p} \mathrm{d}\lambda_\alpha\\
        &=1+\nu\|u_\varepsilon\|^{p}_{L^{p}_\theta}-\int_0^{r_\varepsilon L} |u^\prime_\varepsilon|^{p} \mathrm{d}\lambda_\alpha-\int_\delta^{R} |u^\prime_\varepsilon|^{p} \mathrm{d}\lambda_\alpha,
    \end{aligned}
\end{equation}
where we have used that $H_{\nu}(u_{\varepsilon})=1$.
By $ii)$ in Lemma~\ref{lemag} we have
\begin{equation}\label{res1}
    \int_\delta^{R} |u^\prime_\varepsilon|^{p} \mathrm{d}\lambda_\alpha= \frac{1}{a_\varepsilon^{\frac{p}{p-1}}}\int_\delta^R |g^\prime|^{p}\mathrm{d}\lambda_\alpha+o_\varepsilon(1).
\end{equation}
Note that \eqref{gequationVar} and \eqref{Az}  ensure $g(R)=0$. By integrating by parts in \eqref{derivg} and using \eqref{order-g1}, we obtain
\begin{equation}\label{equationg}
    \begin{aligned}
\int_\delta^R |g^\prime|^{p}\mathrm{d}\lambda_\alpha&=-\int_\delta^R(1+\nu\int_0^r|g|^{p-1}\mathrm{d} \lambda_\theta)g^\prime \mathrm{d} r\\
&=g(\delta)+\nu g(\delta) \int_0^\delta |g|^{p-1} \mathrm{d} \lambda_\theta+\nu \int_\delta^R |g|^{p} \mathrm{d} \lambda_\theta\\
&=-\frac{\theta+1}{\mu_{\alpha,\theta}}\ln \delta+A_0+z(\delta)+\nu \int_\delta^R|g|^p \mathrm{d}\lambda_\theta+O(\delta\ln \delta),
\end{aligned}    
\end{equation}
as $\delta \to 0$. Note that 
 $$\int_\delta^R|g|^p \mathrm{d}\lambda_\theta=\int_0^R|g|^p \mathrm{d}\lambda_\theta-\int_0^\delta|g|^p \mathrm{d}\lambda_\theta=\|g\|^{p}_{L^p_{\theta}}+O(\delta^{\theta}|\ln \delta|^{p}).$$ Consequently,  since we have $z(\delta)=o(\delta)$, as $\delta\to 0$
\begin{equation}\label{ulinhadeu}
\begin{aligned}
    \int_\delta^R |u^\prime_\varepsilon|^{p}\mathrm{d} \lambda_\alpha &= \frac{1}{a_\varepsilon^{\frac{p}{p-1}}}\Big[-\frac{\theta+1}{\mu_{\alpha,\theta}}\ln \delta+A_0+\nu \|g\|^{p}_{L^{p}_\theta}+O(\delta^{\theta}|\ln \delta|^{p})+O(\delta\ln \delta)+o(\delta)\Big]\\
    &=\frac{1}{a_\varepsilon^{\frac{p}{p-1}}}\Big[-\frac{\theta+1}{\mu_{\alpha,\theta}}\ln \delta+A_0+\nu \|g\|^{p}_{L^{p}_\theta}+O(\delta^{\theta}|\ln \delta|^{p})+O(\delta\ln \delta)\Big].
    \end{aligned}
\end{equation}
By \eqref{phipsi} and Lemma~\ref{limpsi}
\begin{equation}\label{ulinha}
    \begin{aligned}
        \int_0^{r_\varepsilon L} |u^\prime_\varepsilon|^{p}\mathrm{d}\lambda_{\alpha}&= \frac{1}{a_\varepsilon^{\frac{p}{p-1}}}\int_0^{L}|\psi^\prime_\varepsilon|^{p}\mathrm{d} \lambda_\alpha\\
        &= \frac{1}{a_\varepsilon^{\frac{p}{p-1}}}\Big[\int_0^{L}|\psi^\prime|^{p}\mathrm{d} \lambda_\alpha+o_\varepsilon(1)\Big].
\end{aligned}
\end{equation}
Observe that 
\begin{equation}\label{integralpsi}
\int_0^{L}|\psi^\prime|^{p}\mathrm{d}\lambda_\alpha=\frac{p-1}{\mu_{\alpha,\theta}}\int_0^{c_{0}L^{\frac{\theta+1}{p-1}}}\frac{s^{p-1}}{(1+s)^{p}}\mathrm{d} s
\end{equation}
and by Lemma~\ref{lemadaintegral} 
\begin{equation}\label{lemapsilinha}
    \begin{aligned}
        \int_0^L|\psi^\prime|^{p}\mathrm{d} \lambda_\alpha=& \frac{p-1}{\mu_{\alpha,\theta}}\left[\ln \Big(1+c_{0}L^{\frac{\theta+1}{p-1}}\Big)-\Psi(p)-\gamma+O\Big(L^{-\frac{\theta+1}{p-1}}\Big)\right]\\
        =& \frac{p-1}{\mu_{\alpha,\theta}}\left[\frac{\theta+1}{p-1}\ln L+\ln c_{0}-\Psi(p)-\gamma+o_L(1)\right],
    \end{aligned}
\end{equation}
where $o_L(1)\to 0$, as $L\to+\infty$. From  \eqref{ulinha} and \eqref{lemapsilinha}, we obtain
\begin{equation}\label{Derivdau}
\begin{aligned}
    \int_0^{r_\varepsilon L}|u^\prime_\varepsilon|^{p}\mathrm{d} \lambda_\alpha&=\frac{1}{a_\varepsilon^{\frac{p}{p-1}}}\Big[\frac{p-1}{\mu_{\alpha,\theta}}\Big(\frac{\theta+1}{p-1}\ln  
 L+\ln c_{0}-\Psi(p)-\gamma+o_L(1)\Big)+o_\varepsilon(1)\Big]\\
 &=\frac{1}{a_\varepsilon^{\frac{p}{p-1}}}\Big[\frac{\theta+1}{\mu_{\alpha,\theta}}\ln   L+\frac{1}{\mu_{\alpha,\theta}}\ln \frac{\omega_{\theta}}{\theta+1}-\frac{p-1}{\mu_{\alpha,\theta}}[\Psi(p)+\gamma]+o_L(1)+o_\varepsilon(1)\Big],
 \end{aligned}
\end{equation}
since $c_0=\Big(\frac{\omega_{\theta}}{\theta+1}\Big)^{\frac{1}{p-1}}$.
By \eqref{conver}, we have
\begin{equation}\label{assintotic}
\|u_\varepsilon\|^{p}_{L^{p}_\theta}=\frac{1}{a_\varepsilon^{\frac{p}{p-1}}}\Big(\|g\|^{p}_{L^{p}_\theta}+o_\varepsilon(1)\Big).
\end{equation}
By \eqref{derivh},  \eqref{equationg}, \eqref{Derivdau} and \eqref{assintotic}, we obtain
\begin{equation}\label{des1}
\begin{aligned}
    \int_{r_\varepsilon L}^\delta |h^\prime|^{p}\mathrm{d}\lambda_\alpha &\le 1+\frac{1}{a_\varepsilon^{\frac{p}{p-1}}}\Big(\nu\|g\|^{p}_{L^{p}_\theta}+o_\varepsilon(1)\Big)\\
    &-\frac{1}{a_\varepsilon^{\frac{p}{p-1}}}\Big[\frac{\theta+1}{\mu_{\alpha,\theta}}\ln   L+\frac{1}{\mu_{\alpha,\theta}}\ln \frac{\omega_{\theta}}{\theta+1}-\frac{p-1}{\mu_{\alpha,\theta}}[\Psi(p)+\gamma]+o_L(1)+o_\varepsilon(1)\Big]\\
    &-\frac{1}{a_\varepsilon^{\frac{p}{p-1}}}\Big[-\frac{\theta+1}{\mu_{\alpha,\theta}}\ln \delta+A_0+\nu \|g\|^{p}_{L^{p}_\theta}+O(\delta^{\theta}|\ln \delta|^{p})+O(\delta\ln \delta)\Big]\\
    &=1+\frac{1}{a_\varepsilon^{\frac{p}{p-1}}}\Big[\frac{\theta+1}{\mu_{\alpha,\theta}}\ln \frac{\delta}{L}-A_0-\frac{1}{\mu_{\alpha,\theta}}\ln \frac{\omega_\theta}{\theta+1}+\frac{p-1}{\mu_{\alpha,\theta}}[\Psi(p)+\gamma]\\
    &\quad\quad\quad\quad\quad+O(\delta^{\theta}|\ln \delta|^{p})+O(\delta\ln \delta)+o_L(1)+o_\varepsilon(1)\Big].
\end{aligned}
\end{equation}
Note that the right-hand side of the above estimate satisfies $0 \le \text{RHS} \le 1$ for sufficiently small $\varepsilon, \delta$ and sufficiently large $L$.
We recall that $(1-t)^a \leq 1-at$ for $0\le t\le 1$,  if $0<a<1$. Thus, \eqref{min-value} and \eqref{des1} imply
\begin{equation}\label{desbaixo}
    \begin{aligned} 
  &\mathcal{m}^{\frac{1}{p-1}}\le  \Big(1-(1-\mbox{RHS})\Big)^{\frac{1}{p-1}}\le 1-\frac{1}{p-1}(1-\mbox{RHS})\\
    &=1+\frac{1}{p-1}\frac{1}{a_\varepsilon^{\frac{p}{p-1}}}\Big[\frac{\theta+1}{\mu_{\alpha,\theta}}\ln \frac{\delta}{L}-A_0-\frac{1}{\mu_{\alpha,\theta}}\ln \frac{\omega_\theta}{\theta+1}+\frac{p-1}{\mu_{\alpha,\theta}}[\Psi(p)+\gamma]
+o_{\delta}(1)+o_L(1)+o_\varepsilon(1)\Big]\\
&=1+\Phi_{\varepsilon}(\delta, L),
    \end{aligned}
\end{equation}
where 
\begin{equation}\label{Phi-definition}
    \Phi_{\varepsilon}(\delta,L)=\frac{1}{p-1}\frac{1}{a_\varepsilon^{\frac{p}{p-1}}}\Big[\frac{\theta+1}{\mu_{\alpha,\theta}}\ln \frac{\delta}{L}-A_0-\frac{1}{\mu_{\alpha,\theta}}\ln \frac{\omega_\theta}{\theta+1}+\frac{p-1}{\mu_{\alpha,\theta}}[\Psi(p)+\gamma]
+o_{\delta}(1)+o_L(1)+o_\varepsilon(1)\Big].
\end{equation}
By \eqref{phipsi} and Lemma~\ref{limpsi}, we have 
$$
u_\varepsilon(r_\varepsilon L)=a_\varepsilon+\frac{1}{a_\varepsilon^{\frac{1}{p-1}}}\Big[-\frac{\theta+1}{\mu_{\alpha,\theta}}\ln L-\frac{1}{\mu_{\alpha,\theta}}\ln \frac{\omega_\theta}{\theta+1}+o_\varepsilon(1)+o_L(1)\Big].
$$
From Lemma~\ref{lemag}, we can write
\begin{equation*}
\begin{aligned}
    u_\varepsilon(\delta)=\frac{1}{a_\varepsilon^{\frac{1}{p-1}}}\Big[-\frac{\theta+1}{\mu_{\alpha,\theta}}\ln \delta+A_0+o_\delta(1)+o_\varepsilon(1)\Big].
\end{aligned}
\end{equation*}
By combining the two previous identities, we obtain
\begin{equation}\label{des2}
    \begin{aligned}
        u_\varepsilon(r_\varepsilon L)-u_\varepsilon(\delta)=&a_\varepsilon\Big\{1+\frac{1}{a_\varepsilon^{\frac{p}{p-1}}}\Big[\frac{\theta+1}{\mu_{\alpha,\theta}}\ln \frac{\delta}{L}- \frac{1}{\mu_{\alpha,\theta}}\ln\frac{\omega_\theta}{\theta+1}-A_0+o_\varepsilon(1)+o_\delta(1)+o_{L}(1)\Big]\Big\}.
    \end{aligned}
\end{equation}
Note  that 
$$
-1 \leq \frac{1}{a_\varepsilon^{\frac{p}{p-1}}}\Big[\frac{\theta+1}{\mu_{\alpha,\theta}}\ln \frac{\delta}{L}- \frac{1}{\mu_{\alpha,\theta}}\ln\frac{\omega_\theta}{\theta+1}-A_0+o_\varepsilon(1)+o_\delta(1)+o_{L}(1)\Big] \leq 0
$$
for  sufficiently small $\varepsilon, \delta$ and sufficiently large $L$. Thus, by  the Bernoulli's inequality
 $(1+t)^b\ge 1+bt$, for $t\ge -1$, if $b\ge 1$ and
using \eqref{des2}, we get
\begin{equation}\label{descima}
    \begin{aligned}
        (u_\varepsilon(r_\varepsilon L)-u_\varepsilon( \delta))^\frac{p}{p-1} &\geq a_\varepsilon^{\frac{p}{p-1}}+\frac{p}{p-1}\Big[\frac{\theta+1}{\mu_{\alpha,\theta}}\ln \frac{\delta}{L}- \frac{1}{\mu_{\alpha,\theta}}\ln\frac{\omega_\theta}{\theta+1}-A_0
        +o_\varepsilon(1)+o_\delta(1)+o_L(1)\Big].
    \end{aligned}
\end{equation}
Hence
\begin{equation}\label{des22}
    \begin{aligned}
         \mathcal{m}^{\frac{1}{p-1}}&\ge \mu_{\alpha,\theta} \frac{a_\varepsilon^{\frac{p}{p-1}}+\frac{p}{p-1}\Big[\frac{\theta+1}{\mu_{\alpha,\theta}}\ln \frac{\delta}{L}- \frac{1}{\mu_{\alpha,\theta}}\ln\frac{\omega_\theta}{\theta+1}-A_0
        +o_\varepsilon(1)+o_\delta(1)+o_L(1)\Big]}{(\theta+1)\ln\frac{\delta}{L}-(\theta+1)\ln r_{\varepsilon}}\\
       &=\frac{\mu_{\alpha,\theta}a_\varepsilon^{\frac{p}{p-1}}+\frac{p}{p-1}\Big[(\theta+1)\ln \frac{\delta}{L}-\ln\frac{\omega_\theta}{\theta+1}-\mu_{\alpha,\theta}A_0
        +o_\varepsilon(1)+o_\delta(1)+o_L(1)\Big]}{(\theta+1)\ln\frac{\delta}{L}-(\theta+1)\ln r_{\varepsilon}} .
    \end{aligned}
\end{equation}
Next, we will combine estimates \eqref{desbaixo} and \eqref{des22} to derive 
\eqref{teodesig} via Lemma~\ref{desig}.  To this end, we focus on the value of $(\theta+1)\ln r_{\varepsilon}$ in \eqref{des22}.
By definition of $r_\varepsilon$ in \eqref{rE}, we have
\begin{equation*}
 -(\theta+1)\ln r_\varepsilon= \mu_\varepsilon a_\varepsilon^{\frac{p}{p-1}}-\ln \frac{\lambda_\varepsilon}{a_\varepsilon^{\frac{p}{p-1}}}. 
\end{equation*}
Then, by combining \eqref{desbaixo} with \eqref{des22} we obtain
\begin{equation}
    \begin{aligned}
     \mu_{\alpha,\theta}a_\varepsilon^{\frac{p}{p-1}}+&\frac{p}{p-1}\Big[(\theta+1)\ln \frac{\delta}{L}-\ln\frac{\omega_\theta}{\theta+1}-\mu_{\alpha,\theta}A_0
        +o_\varepsilon(1)+o_\delta(1)+o_L(1)\Big] \\  &\le \Big((\theta+1)\ln\frac{\delta}{L}+\mu_\varepsilon a_\varepsilon^{\frac{p}{p-1}}-\ln \frac{\lambda_\varepsilon}{a_\varepsilon^{\frac{p}{p-1}}}\Big)\mathcal{m}^{\frac{1}{p-1}}\\
        & \le \Big((\theta+1)\ln\frac{\delta}{L}+\mu_\varepsilon a_\varepsilon^{\frac{p}{p-1}}-\ln \frac{\lambda_\varepsilon}{a_\varepsilon^{\frac{p}{p-1}}}\Big)(1+\Phi_{\varepsilon}(\delta, L)).
        \end{aligned}
\end{equation}
From this,
\begin{equation}\label{Phi-passo1}
\begin{aligned}
 \Big(1+\Phi_\varepsilon(\delta, L)\Big)\ln\frac{\lambda_\varepsilon}{a_\varepsilon^\frac{p}{p-1}}&\leq (\theta+1)\ln\frac{\delta}{L}+\mu_\varepsilon a_\varepsilon^{\frac{p}{p-1}}+\Phi_{\varepsilon}(\delta, L)(\theta+1)\ln\frac{\delta}{L}+\mu_\varepsilon a_\varepsilon^{\frac{p}{p-1}}\Phi_{\varepsilon}(\delta, L)\\
 &- \mu_{\alpha,\theta}a_\varepsilon^{\frac{p}{p-1}}-\frac{p}{p-1}\Big[(\theta+1)\ln \frac{\delta}{L}-\ln\frac{\omega_\theta}{\theta+1}-\mu_{\alpha,\theta}A_0
        +o_\varepsilon(1)+o_\delta(1)+o_L(1)\Big].
\end{aligned}
\end{equation}
Recalling that $\mu_{\varepsilon}=\mu_{\alpha,\theta}-\epsilon$ we can write
\begin{equation}
	\begin{aligned}\label{Phi-simples}
		 \mu_{\varepsilon}a_\varepsilon^{\frac{p}{p-1}}\Phi_{\varepsilon}(\delta, L)&=\mu_{\alpha,\theta}a_\varepsilon^{\frac{p}{p-1}}\Phi_{\varepsilon}(\delta, L)-\varepsilon a_\varepsilon^{\frac{p}{p-1}}\Phi_{\varepsilon}(\delta, L)\\
		 &=\frac{\theta+1}{p-1}\ln \frac{\delta}{L}-\frac{\mu_{\alpha,\theta}}{p-1}A_0-\frac{1}{p-1}\ln \frac{\omega_\theta}{\theta+1}+[\Psi(p)+\gamma]
		+o_{\delta}(1)+o_L(1)+o_\varepsilon(1)\\
		&- \frac{\varepsilon}{p-1}\Big[\frac{\theta+1}{\mu_{\alpha,\theta}}\ln \frac{\delta}{L}-A_0-\frac{1}{\mu_{\alpha,\theta}}\ln \frac{\omega_\theta}{\theta+1}+\frac{p-1}{\mu_{\alpha,\theta}}[\Psi(p)+\gamma]
		+o_{\delta}(1)+o_L(1)+o_\varepsilon(1)\Big].
	\end{aligned}
\end{equation}
Plugging \eqref{Phi-simples} into \eqref{Phi-passo1}, we derive
\begin{equation}\label{des10}
	\begin{aligned}
		\Big(1+\Phi_\varepsilon(\delta,L)\Big)\ln\frac{\lambda_\varepsilon}{a_\varepsilon^\frac{p}{p-1}} &\leq - \varepsilon a_\varepsilon^{\frac{p}{p-1}}+ \Big((\theta+1)\Phi_{\varepsilon}(\delta, L)-\frac{(\theta+1)\varepsilon}{(p-1)\mu_{\alpha,\theta}}\Big)\ln \frac{\delta}{L}\\
		&+\Big(1+\frac{\varepsilon}{(p-1)\mu_{\alpha,\theta}}\Big)\ln \frac{\omega_\theta}{\theta+1}+\Big(\mu_{\alpha,\theta}+\frac{\varepsilon}{p-1}\Big)A_0\\
		&+[\Psi(p)+\gamma]+o_\varepsilon(1)+o_L(1)+o_\delta(1)\\
	&	\le  \Big((\theta+1)\Phi_{\varepsilon}(\delta, L)-\frac{(\theta+1)\varepsilon}{(p-1)\mu_{\alpha,\theta}}\Big)\ln \frac{\delta}{L}\\
	&+\Big(1+\frac{\varepsilon}{(p-1)\mu_{\alpha,\theta}}\Big)\ln \frac{\omega_\theta}{\theta+1}+\Big(\mu_{\alpha,\theta}+\frac{\varepsilon}{p-1}\Big)A_0\\
	&+[\Psi(p)+\gamma]+o_\varepsilon(1)+o_L(1)+o_\delta(1).
	\end{aligned}
\end{equation}
From \eqref{lambda-from below} and \eqref{Phi-definition}, we conclude that 
$$
\Phi_\varepsilon(\delta,L)\to 0\;\;\mbox{and}\;\;\Phi_\varepsilon(\delta,L)\ln\frac{\lambda_\varepsilon}{a_\varepsilon^\frac{p}{p-1}} \to 0, \;\;\mbox{as}\;\; \varepsilon\to 0,
$$
for arbitrarily fixed $\delta, L>0$.  Hence, \eqref{des10} yields
\begin{equation*}
    \ln \frac{\lambda_\varepsilon}{a_\varepsilon^{\frac{p}{p-1}}} \leq \ln \frac{\omega_\theta}{\theta+1}+\mu_{\alpha,\theta}A_0+\Psi(p)+\gamma+o_L(1)+o_\varepsilon(1)+o_\delta(1) 
\end{equation*}
which ensures
$$
\lim_{\varepsilon \to 0} \frac{\lambda_\varepsilon}{a_\varepsilon^\frac{p}{p-1}}\leq \frac{\omega_\theta}{\theta+1}e^{\mu_{\alpha,\theta}A_0+\Psi(p)+\gamma}=e^{\mu_{\alpha,\theta}A_0+\Psi(p)+\gamma}|B_1|_{\theta}.
$$
So, Lemma~\ref{desig}  implies \eqref{teodesig}.
\end{proof}
\subsection{Test-function computations}
\begin{lemma} \label{Lemma-TF}There exists a family $(v_\varepsilon) \subset X^{1,p}_R(\alpha,\theta)$ such that
    \begin{equation}\label{contrad}
        H_\nu(v_\varepsilon) = 1 \quad \text{and} \quad \int_0^R e^{\mu_{\alpha,\theta}|v_\varepsilon|^{\frac{p}{p-1}}} \, \mathrm{d} \lambda_\theta > |B_R|_{\theta}+e^{\mu_{\alpha,\theta}A_0+\Psi(p)+\gamma}|B_1|_\theta,
    \end{equation}
    for all $\varepsilon > 0$ sufficiently small.
\end{lemma}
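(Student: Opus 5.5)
We will construct a Carleson--Chang/Lin type test function glued from the bubble $\psi$ of Lemma~\ref{limpsi} in the inner region and from the Green function $g$ of Lemma~\ref{lemag} in the outer region. For $\varepsilon>0$ small, set $R_\varepsilon=-\ln \varepsilon$ (so $\varepsilon R_\varepsilon\to 0$, $R_\varepsilon\to\infty$), and, with $c>0$ a constant to be fixed, define
\begin{equation*}
v_\varepsilon(r)=\begin{cases}
c+\dfrac{1}{c^{\frac{1}{p-1}}}\Big[-\dfrac{p-1}{\mu_{\alpha,\theta}}\ln\Big(1+c_0\big(\tfrac{r}{\varepsilon}\big)^{\frac{\theta+1}{p-1}}\Big)+B_\varepsilon\Big], & 0\le r\le R_\varepsilon\varepsilon,\\[2mm]
\dfrac{g(r)}{c^{\frac{1}{p-1}}}, & R_\varepsilon\varepsilon< r\le R,
\end{cases}
\end{equation*}
where $B_\varepsilon$ is chosen so that $v_\varepsilon$ is continuous at $r=R_\varepsilon\varepsilon$. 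Using \eqref{Definig}, continuity forces
\[
c^{\frac{p}{p-1}}=-\tfrac{\theta+1}{\mu_{\alpha,\theta}}\ln\varepsilon+A_0+\tfrac{1}{\mu_{\alpha,\theta}}\ln\tfrac{\omega_\theta}{\theta+1}-B_\varepsilon+O\big(R_\varepsilon^{-\frac{\theta+1}{p-1}}\big)+z(R_\varepsilon\varepsilon).
\]
Then $v_\varepsilon\in X^{1,p}_R(\alpha,\theta)$, since $g(R)=0$.

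First we compute $H_\nu^p(v_\varepsilon)$. For the outer gradient we use \eqref{equationg}, with $\delta=R_\varepsilon\varepsilon$. For the inner gradient we use \eqref{integralpsi} together with Lemma~\ref{lemadaintegral}, applied at $L=R_\varepsilon$. For the $L^p_\theta$ term we use $\|v_\varepsilon\|^p_{L^p_\theta}=c^{-\frac{p}{p-1}}\big(\|g\|^p_{L^p_\theta}+o(1)\big)$. The $\nu\|g\|^p_{L^p_\theta}$ contributions from \eqref{equationg} and from $-\nu\|v_\varepsilon\|^p$ cancel, and the $\ln\varepsilon$, $\ln R_\varepsilon$ terms combine so that
\[
H_\nu^p(v_\varepsilon)=\frac{1}{c^{\frac{p}{p-1}}}\Big[-\tfrac{\theta+1}{\mu_{\alpha,\theta}}\ln\varepsilon+A_0+\tfrac{1}{\mu_{\alpha,\theta}}\ln\tfrac{\omega_\theta}{\theta+1}-\tfrac{p-1}{\mu_{\alpha,\theta}}(\Psi(p)+\gamma)+o\big(\tfrac{1}{\ln\varepsilon}\big)\Big].
\]
Imposing $H_\nu(v_\varepsilon)=1$ and comparing with the continuity relation yields
\[
B_\varepsilon=\tfrac{p-1}{\mu_{\alpha,\theta}}(\Psi(p)+\gamma)+o(1/|\ln\varepsilon|),
\]
and $c^{\frac{p}{p-1}}$ is of order $|\ln\varepsilon|$.

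Next we estimate the integral. On the inner region, expand $\mu_{\alpha,\theta}v_\varepsilon^{\frac{p}{p-1}}$ via $(c+t)^{\frac{p}{p-1}}\ge c^{\frac{p}{p-1}}+\frac{p}{p-1}c^{\frac{1}{p-1}}t$. The exponent is then at least $\mu_{\alpha,\theta}c^{\frac{p}{p-1}}-p\ln(1+c_0 s^{\frac{\theta+1}{p-1}})+\frac{p}{p-1}\mu_{\alpha,\theta}B_\varepsilon$, which after substitution equals $-(\theta+1)\ln\varepsilon+\mu_{\alpha,\theta}A_0+\ln\frac{\omega_\theta}{\theta+1}+\Psi(p)+\gamma-p\ln(1+\cdots)+o(1)$. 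Integrating over $r=\varepsilon s$, using $\mathrm d\lambda_\theta(r)=\varepsilon^{\theta+1}\mathrm d\lambda_\theta(s)$ and Lemma~\ref{lemmaequal1}, gives
\[
e^{\mu_{\alpha,\theta}A_0+\Psi(p)+\gamma}|B_1|_\theta+o(1).
\]
This is only the equality level, so the strict gain must come from second-order terms. On the outer region we use $e^t\ge 1+t$, which gives
\[
\int_{R_\varepsilon\varepsilon}^R e^{\mu v_\varepsilon^{\frac{p}{p-1}}}\,\mathrm d\lambda_\theta\ge |B_R|_\theta-o\big(|\ln\varepsilon|^{-1}\big)+\frac{\mu_{\alpha,\theta}}{c^{\frac{p}{p-1}}}\int_0^R g^{\frac{p}{p-1}}\,\mathrm d\lambda_\theta(1+o(1)).
\]
The last term is a positive quantity of order $1/|\ln\varepsilon|$.

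The main obstacle is the bookkeeping of all error terms at precision $o(1/|\ln\varepsilon|)$, so that the positive outer term $\mu_{\alpha,\theta}\|g\|^{p/(p-1)}_{L^{p/(p-1)}_\theta}/c^{\frac{p}{p-1}}$ beats every loss. The loss terms to control are the $O(R_\varepsilon^{-\frac{\theta+1}{p-1}})$ error from Lemma~\ref{lemadaintegral}, the $z(R_\varepsilon\varepsilon)=o(R_\varepsilon\varepsilon)$ term, the $O(\delta\ln\delta)$ errors in \eqref{equationg}, and the $o(1)$ in $B_\varepsilon$. Here the choice $R_\varepsilon=-\ln\varepsilon$ together with $\theta+1\ge p$ makes $R_\varepsilon^{-\frac{\theta+1}{p-1}}=o(1/|\ln\varepsilon|)$. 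We also must retain the inner second-order term rather than discarding it: Bernoulli only gives an inequality, and its remainder, of order $c^{-\frac{p}{p-1}}\ln^2(1+s)$, is nonnegative for $p\ge2$. We then need to check that the inner contribution is $\ge e^{\mu A_0+\Psi+\gamma}|B_1|_\theta-o(1/|\ln\varepsilon|)$, which requires the tail $\int_{R_\varepsilon}^\infty(1+c_0s^{\frac{\theta+1}{p-1}})^{-p}\,\mathrm d\lambda_\theta=O(R_\varepsilon^{-\frac{\theta+1}{p-1}})$ to be $o(1/|\ln\varepsilon|)$. Summing the three estimates yields \eqref{contrad} for $\varepsilon$ small.
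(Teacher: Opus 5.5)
Your proposal is correct and is essentially the paper's argument: the same bubble/Green-function test function with $R_\varepsilon=-\ln\varepsilon$ (the paper's $L$), the same Bernoulli bound on the core $[0,R_\varepsilon\varepsilon]$, and the same error bookkeeping. The only cosmetic differences are that you glue $g/c^{1/(p-1)}$ directly at $R_\varepsilon\varepsilon$ (absorbing $z(R_\varepsilon\varepsilon)=o(R_\varepsilon\varepsilon)$ into $B_\varepsilon$) instead of using the cutoff $g-\varphi z$ on $[L\varepsilon,2L\varepsilon]$, and that you use $e^t\ge 1+t$ in the outer region instead of $e^x\ge 1+d_p x^{p-1}$; two small slips are harmless, namely that the outer gain is actually $\mu_{\alpha,\theta}c^{-p/(p-1)^2}\int g^{p/(p-1)}\,\mathrm d\lambda_\theta$ (your $c^{-p/(p-1)}$ is a weaker but still valid lower bound since $c\ge 1$ and $p\ge 2$), and that the Bernoulli remainder need not be retained, since it is nonnegative and can simply be dropped.
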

\begin{proof}
Let $ g $ and $ z $ be as defined in Lemma~\ref{lemag}-$(iii)$. 
For $ \varepsilon > 0 $, set $ L := -\ln \varepsilon$.  Then, let
 $(v_\varepsilon),\; \varepsilon>0$ be defined by
\begin{equation}\label{famitest}
    v_\varepsilon(r) =
    \begin{cases}
        c +\frac{1}{c^{\frac{1}{p-1}}} \left[- \frac{p-1}{\mu_{\alpha,\theta}} \ln \Big( 1 + c_0 \left(\frac{r}{\varepsilon} \right)^{\frac{\theta+1}{p-1}} \Big) + \Lambda_\varepsilon \right], & 0 \leq r \leq L\varepsilon \\
        \frac{1}{c^{\frac{1}{p-1}}}(g - \varphi z), & L\varepsilon \leq r \leq 2 L\varepsilon \\
        \frac{1}{c^{\frac{1}{p-1}}} g, & 2 L \varepsilon\le r \le R,
    \end{cases}
\end{equation}
where $c$ and $\Lambda_\varepsilon$ will be chosen later so that 
$H_\nu(v_\varepsilon) = 1$ and $v_\varepsilon \in X^{1,p}_R(\alpha,\theta)$.  Here, 
 $\varphi\in C^1[0,R] $ is a smooth function such that  $0\le \varphi\le 1$, 
$\varphi \equiv 1$ on $[0, L\varepsilon]$, 
$\varphi \equiv 0$ on $[2L\varepsilon,R]$ 
and $|\varphi^\prime| = O\left(\frac{1}{L\varepsilon}\right)$. To ensure that $v_\varepsilon \in X^{1,p}_R(\alpha,\theta)$, 
we choose $\Lambda_\varepsilon$ and $c$ so that 
$v_\varepsilon$ is continuous at $r = L\varepsilon$. 
Consequently, $\Lambda_\varepsilon$ and $c$ must satisfy
\begin{equation*}
 c + \frac{1}{c^{\frac{1}{p-1}}} \left[- \frac{p-1 }{\mu_{\alpha,\theta}} \ln \Big( 1 + c_0 L^{\frac{\theta+1}{p-1}} \Big) + \Lambda_\varepsilon \right]=\frac{g(L\varepsilon) -  z(L\varepsilon)}{c^\frac{1}{p-1}},
\end{equation*}
or equivalently,
\begin{equation}\label{Defidelambda}
\Lambda_\varepsilon=-c^{\frac{p}{p-1}}+\frac{p-1}{\mu_{\alpha,\theta}} \ln \Big( 1 + c_0 L^{\frac{\theta+1}{p-1}} \Big)-\frac{\theta+1}{\mu_{\alpha,\theta}}\ln (L\varepsilon)+A_0.
\end{equation}
Next, we compute expressions for $\|v'_\varepsilon\|_{L^{p}_\alpha}^{p}$ and $\|v_\varepsilon\|_{L^{p}_\theta}^{p}$. Since $z\in C^1[0,R]$  and $ z^{\prime}(0) = z(0)=0$, we have $(\varphi z)' = O(1)$ as $\varepsilon \to 0$. 
Moreover, $|g'| = O((L\varepsilon)^{-1})$ uniformly on 
$[L\varepsilon, 2L\varepsilon]$. 
Hence, for $\varepsilon$ sufficiently small,
\begin{equation*}
|v^\prime_\varepsilon|^{p} = \frac{1}{c^{\frac{p}{p-1}}}|g^\prime - (\varphi z)^\prime|^{p} 
= \frac{|g^\prime|^{p}}{c^{\frac{p}{p-1}}}|1 - O(L\varepsilon)|^{p} 
= \frac{1}{c^{\frac{p}{p-1}}}|g^\prime|^{p}(1 + O(L\varepsilon)),
\end{equation*}
uniformly on $[L\varepsilon,2L\varepsilon]$ as $\varepsilon \to 0$. Therefore, we obtain
\begin{equation*}
\begin{aligned}  \int_{L\varepsilon}^R|v_\varepsilon^\prime|^{p}\mathrm{d} \lambda_\alpha&=\int_{L\varepsilon}^{2L\varepsilon}|v_\varepsilon^\prime|^{p}\mathrm{d} \lambda_\alpha+\int_{2L\varepsilon}^R|v_\varepsilon^\prime|^{p}\mathrm{d} \lambda_\alpha\\
    &=
    \frac{1}{c^\frac{p}{p-1}}\Big[(1+O(L\varepsilon))\int_{L\varepsilon}^{2L\varepsilon}|g^\prime|^{p}\mathrm{d} \lambda_\alpha+\int_{2L\varepsilon}^R|g^\prime|^{p}\mathrm{d} \lambda_\alpha\Big]\\
    &=
    \frac{1}{c^{\frac{p}{p-1}}}\Big[\int_{L\varepsilon}^R|g^\prime|^{p}\mathrm{d}\lambda_\alpha + O(L\varepsilon)\int_{L\varepsilon}^{2L\varepsilon} |g^\prime|^{p}\mathrm{d} \lambda_\alpha\Big].
\end{aligned}
\end{equation*}
Note that $|g'| = O((L\varepsilon)^{-1})$ uniformly on 
$[L\varepsilon, 2L\varepsilon]$ implies
\begin{equation*}
O(L\varepsilon)\int_{L\varepsilon}^{2L\varepsilon}|g^\prime|^{p}\mathrm{d} \lambda_\alpha=O\Big(\frac{1}{(L\varepsilon)^{p-1}}\Big)\int_{L\varepsilon}^{2L\varepsilon} \mathrm{d}\lambda_{\alpha}=O(L\varepsilon).
\end{equation*}
Consequently,
\begin{equation*}
\int_{L\varepsilon}^R|v_\varepsilon^\prime|^{p}\mathrm{d} \lambda_\alpha= \frac{1}{c^{\frac{p}{p-1}}}\Big[\int_{L\varepsilon}^R|g^\prime|^{p}\mathrm{d}\lambda_\alpha + O(L\varepsilon)\Big].
\end{equation*}
To compute the above integral involving $g^{\prime}$, 
we choose the test function $z_\varepsilon(r)=\min\{g(r),g(L\varepsilon)\}$ in \eqref{eqG}. 
Then

\begin{equation}\label{g-fracaz}
    \int_0^R|g^\prime|^{p-2}g^\prime z_\varepsilon^\prime \mathrm{d}\lambda_\alpha=\delta_0(z_\varepsilon)+\nu \int_0^R|g|^{p-2}g z_\varepsilon \mathrm{d}\lambda_\theta.
\end{equation}
Now, it follows directly from \eqref{Definig} that
	\begin{equation}\label{g-LqO}
		\int_{0}^r|g|^{q}\mathrm{d}\lambda_{\theta}=O(r^{\theta+1}|\ln r|^{q}),\;\; q\ge 1\;\; \mbox{as}\;\; r\to 0.
	\end{equation}
Therefore, recalling that $g$ is a decreasing function, \eqref{g-fracaz} and \eqref{g-LqO} imply
\begin{equation*}
    \begin{aligned}
        \int_{L\varepsilon }^R|g^{\prime}|^p \mathrm{d}\lambda_\alpha&=g(\varepsilon L)+\nu g(\varepsilon L)\int_0^{L\varepsilon }|g|^{p-1}\mathrm{d}\lambda{_\theta}+\nu \int_{L\varepsilon}^R|g|^p \mathrm{d}\lambda_\theta\\
   &=\nu \|g\|^{p}_{L^{p}_\theta}-\frac{\theta+1}{\mu_{\alpha,\theta}}\ln L\varepsilon+A_0+z(L\varepsilon)+\nu g(L\varepsilon)\int_0^{L\varepsilon}|g|^{p-1} \mathrm{d}\lambda_\theta -\nu \int_0^{L \varepsilon}|g|^p \mathrm{d}\lambda_\theta\\
   &=\nu \|g\|^{p}_{L^{p}_\theta}-\frac{\theta+1}{\mu_{\alpha,\theta}}\ln L\varepsilon+A_0+o(L\varepsilon)+O((L\varepsilon)^{\theta+1}|\ln (L\varepsilon)|^p).
    \end{aligned}
\end{equation*}
It follows that
\begin{equation}\label{firstpart}
	\begin{aligned}
	\int_{L_\varepsilon}^R|v^\prime_\varepsilon|^{p}\mathrm{d}\lambda_\alpha& =\frac{1}{c^{\frac{p}{p-1}}}\Big[\nu\|g\|^{p}_{L^{p}_\theta}-\frac{\theta+1}{\mu_{\alpha,\theta}}\ln L\varepsilon+A_0+O(L\varepsilon)+O((L\varepsilon)^{\theta+1}|\ln (L\varepsilon)|^p)\Big]\\
	&=\frac{1}{c^{\frac{p}{p-1}}}\Big[\nu\|g\|^{p}_{L^{p}_\theta}-\frac{\theta+1}{\mu_{\alpha,\theta}}\ln L\varepsilon+A_0+O(L\varepsilon|\ln (L\varepsilon)|)\Big].
	\end{aligned}	
\end{equation}
By Lemma~\ref{lemadaintegral}, a direct computation yields
\begin{equation}\label{seconfpart}
\begin{aligned}
\int_{0}^{L\varepsilon}|v^\prime_\varepsilon|^{p}\mathrm{d} \lambda_\alpha
&=\frac{p-1}{c^{\frac{p}{p-1}}\mu_{\alpha,\theta}}\int_0^{c_0 L^{\frac{\theta+1}{p-1}}} \frac{s^{p-1}}{(1+s)^p} \mathrm{d} s
\\
&=\frac{p-1}{c^{\frac{p}{p-1}}\mu_{\alpha,\theta}}\Big[\ln(1+
c_0L^{\frac{\theta+1}{p-1}})-\Psi(p)-\gamma+O(L^{-\frac{\theta+1}{p-1}})\Big].
\end{aligned}
\end{equation}
Thus, using \eqref{firstpart} and \eqref{seconfpart}, we obtain
\begin{equation}\label{normaulinha}
    \begin{aligned}
       \|v^\prime_\varepsilon\|_{L^{p}_\alpha}^{p} &= \frac{1}{c^{\frac{p}{p-1}}}\Big\{\nu \|g\|^{p}_{L^{p}_\alpha}-\frac{\theta+1}{\mu_{\alpha,\theta}}\ln L\varepsilon+A_0+\frac{p-1}{\mu_{\alpha,\theta}}\Big[\ln(1+c_0 L^{\frac{\theta+1}{p-1}})\\
 &-\Psi(p)-\gamma+O(L^{-\frac{\theta+1}{p-1}})\Big]+O(L\varepsilon|\ln (L\varepsilon)|)\Big\}.
    \end{aligned}
\end{equation}
Next, we compute $\|v_\varepsilon\|_{L^{p}_\theta}^{p}$. 
By  \eqref{Definig}, we have
\begin{equation*}
v_\varepsilon=\frac{g}{c^{\frac{1}{p-1}}}\Big[1+\frac{O(L\varepsilon)}{\ln(L\varepsilon)}\Big]
\end{equation*}
uniformly on $[L\varepsilon,2L\varepsilon]$. 
Consequently, \eqref{g-LqO} yields
\begin{equation}\label{ufirtsinteval}
    \begin{aligned}
        \int_{L\varepsilon}^R |v_\varepsilon|^p \mathrm{d} \lambda_\theta&=\frac{1}{c^{\frac{p}{p-1}}}\Big[\int_{2L\varepsilon}^R|g|^p \mathrm{d}\lambda_{\theta} +\int_{L\varepsilon}^{2L\varepsilon}|g|^p\mathrm{d}\lambda_{\theta}+\frac{O(L\varepsilon)}{|\ln(L\varepsilon)|}\int_{L\varepsilon}^{2L\varepsilon}|g|^p\mathrm{d}\lambda_{\theta}\Big]\\
        &=\frac{1}{c^{\frac{p}{p-1}}}\Big[\|g\|^{p}_{L^{p}_\theta} +O((L\varepsilon)^{\theta+1}|\ln (L\varepsilon)|^p)\Big].
    \end{aligned}
\end{equation}
Substituting the expression for $c^{\frac{p}{p-1}}+\Lambda_{\varepsilon}$ obtained in \eqref{Defidelambda} into \eqref{famitest}, for $0 \leq r \leq L\varepsilon$, we obtain
\begin{equation*}
	\begin{aligned}
		c^{\frac{1}{p-1}}v_\varepsilon&=\frac{p-1}{\mu_{\alpha,\theta}}
		\ln\!\left(
		\frac{1 + c_0 L^{\frac{\theta+1}{p-1}}}
		{1 + c_0 \left(\frac{r}{\varepsilon}\right)^{\frac{\theta+1}{p-1}}}
		\right)-\frac{\theta+1}{\mu_{\alpha,\theta}}\ln(L\varepsilon)+A_0
	\end{aligned}
\end{equation*}
and so
\begin{equation*}
	  \int_{0}^{L\varepsilon} |v_\varepsilon|^q\mathrm{d} \lambda_\theta=\frac{1}{c^{\frac{q}{p-1}}}\Big[O\Big((L\varepsilon)^{\theta+1}|\ln(L\varepsilon)|^{q}\Big)\Big],
\end{equation*}
for $\varepsilon > 0$ sufficiently small. 
Combining this estimate with \eqref{ufirtsinteval} we obtain
\begin{equation}\label{unorm}
    \|v_\varepsilon\|^p_{L^{p}_{\theta}}=\frac{1}{c^{\frac{p}{p-1}}}\Big[\|g\|^{p}_{L^{p}_\theta} +O((L\varepsilon)^{\theta+1}|\ln (L\varepsilon)|^p)\Big].
\end{equation}
From \eqref{normaulinha} and \eqref{unorm},   we have $ H^p _{\nu}(v_\varepsilon)=1$ if and only if 
\begin{equation}\nonumber
	\begin{aligned}
		 c^{\frac{p}{p-1}}
		&=-\frac{\theta+1}{\mu_{\alpha,\theta}}\ln L\varepsilon+A_0+\frac{p-1}{\mu_{\alpha,\theta}}\Big[\ln(1+c_0 L^{\frac{\theta+1}{p-1}})
		-\Psi(p)-\gamma\Big]\\
		&+O(L^{-\frac{\theta+1}{p-1}})+O(L\varepsilon|\ln (L\varepsilon)|)+O((L\varepsilon)^{\theta+1}|\ln (L\varepsilon)|^p)\\
		&=-\frac{\theta+1}{\mu_{\alpha,\theta}}\ln\varepsilon+A_0+\frac{1}{\mu_{\alpha,\theta}}\ln\frac{\omega_{\theta}}{\theta+1}-\frac{p-1}{\mu_{\alpha,\theta}}\big[
		\Psi(p)+\gamma\big]+\frac{p-1}{\mu_{\alpha,\theta}}\ln\Big(1+\frac{1}{c_0 L^{\frac{\theta+1}{p-1}}}\Big)\\
		&+O(L^{-\frac{\theta+1}{p-1}})+O(L\varepsilon|\ln (L\varepsilon)|)+O((L\varepsilon)^{\theta+1}|\ln (L\varepsilon)|^p).
		  \end{aligned}
\end{equation}
Recalling that $L=-\ln\varepsilon$ (or $\varepsilon=e^{-L}$),  our suitable choice of $c$ is such that
\begin{equation}\label{assintoticoc}
	\begin{aligned}
		c^{\frac{p}{p-1}}
		&=-\frac{\theta+1}{\mu_{\alpha,\theta}}\ln\varepsilon+A_0+\frac{1}{\mu_{\alpha,\theta}}\ln\frac{\omega_{\theta}}{\theta+1}-\frac{p-1}{\mu_{\alpha,\theta}}\big[
		\Psi(p)+\gamma\big]+O(L^{-\frac{\theta+1}{p-1}}).
	\end{aligned}
\end{equation}
It remains to estimate 
$
\int_0^R e^{\mu_{\alpha,\theta}| v_\varepsilon|^{\frac{p}{p-1}}} \mathrm{d}\lambda_\theta.
$
Firstly, for any $p \ge 2$, there exists a constant $d_p > 0$ such that
$e^{x} \ge 1 + d_p\, x^{p-1} ,$ for all $x\ge 0$. Hence, by \eqref{ufirtsinteval} and using the fact that 
$c^{\frac{p}{p-1}} = O(-\ln\varepsilon)= O(L)$ in view of \eqref{assintoticoc}, we obtain
\begin{equation}\label{integraldentro}
    \begin{aligned}
        \int_{L\varepsilon}^R e^{\mu_{\alpha,\theta}| v_\varepsilon|^{\frac{p}{p-1}}} \mathrm{d}\lambda_\theta  \geq&\int_{L\varepsilon}^R\Big(1+d_p\mu_{\alpha,\theta}^{p-1}|v_\varepsilon|^p\Big) \mathrm{d}\lambda_\theta\\
        =&|B_R|_\theta-|B_{L\varepsilon}|_\theta+d_p\mu_{\alpha,\theta}^{p-1}\frac{1}{c^{\frac{p}{p-1}}}\Big[\|g\|^{p}_{L^{p}_\theta} +O((L\varepsilon)^{\theta+1}|\ln (L\varepsilon)|^p)\Big]\\
        =&|B_R|_\theta+d_p\mu_{\alpha,\theta}^{p-1}\frac{1}{c^{\frac{p}{p-1}}}\Big[\|g\|^{p}_{L^{p}_\theta} +O((L\varepsilon)^{\theta+1}|\ln (L\varepsilon)|^p)\Big]+O((L\varepsilon)^{\theta+1})\\
        =&|B_R|_\theta+d_p\mu_{\alpha,\theta}^{p-1}\frac{1}{c^{\frac{p}{p-1}}}\|g\|^{p}_{L^{p}_\theta}+O(L^{-\frac{\theta+1}{p-1}}).
    \end{aligned}  
\end{equation}
Using the inequality $|1+t|^a \geq 1+at$ for all $t \in \mathbb{R}$ and $a>1$, we get
\begin{equation*}
\begin{aligned}
    |v_\varepsilon|^{\frac{p}{p-1}} &\geq c^{\frac{p}{p-1}}-\frac{p}{\mu_{\alpha,\theta}}\ln\Big(1+c_0\big(\frac{r}{\varepsilon}\big)^\frac{\theta+1}{p-1}\Big)+\frac{p}{p-1}\Lambda_\varepsilon\\
    &=-\frac{1}{p-1}c^{\frac{p}{p-1}}-\frac{p}{\mu_{\alpha,\theta}}\ln\Big(1+c_0\big(\frac{r}{\varepsilon}\big)^\frac{\theta+1}{p-1}\Big)+\frac{p}{p-1}(c^\frac{p}{p-1}+\Lambda_\varepsilon),
\end{aligned}
\end{equation*}
for $r\in [0, L\varepsilon]$.
Using \eqref{Defidelambda} and \eqref{assintoticoc}, we obtain
\begin{equation*}
\begin{aligned}
    &|v_\varepsilon|^{\frac{p}{p-1}} \geq
    -\frac{1}{p-1}c^{\frac{p}{p-1}}-\frac{p}{\mu_{\alpha,\theta}}\ln\Big(1+c_0\big(\frac{r}{\varepsilon}\big)^\frac{\theta+1}{p-1}\Big)\\
    &+\frac{p}{\mu_{\alpha,\theta}}\ln \Big( 1 + c_0 L^{\frac{\theta+1}{p-1}} \Big)-\frac{\theta+1}{\mu_{\alpha,\theta}}\frac{p}{p-1}\ln (L\varepsilon)+\frac{p}{p-1}A_0\\
    &=-\frac{\theta+1}{\mu_{\alpha,\theta}}\ln\varepsilon+\frac{1}{\mu_{\alpha,\theta}}\ln\frac{\omega_{\theta}}{\theta+1}+A_0+\frac{1}{\mu_{\alpha,\theta}}\big[
    \Psi(p)+\gamma\big]-\frac{p}{\mu_{\alpha,\theta}}\ln\Big(1+c_0\big(\frac{r}{\varepsilon}\big)^\frac{\theta+1}{p-1}\Big)+O(L^{-\frac{\theta+1}{p-1}}).
\end{aligned}
\end{equation*}
Integrating on $[0, L\varepsilon]$ and making change of variables $r=\varepsilon s$, we get
\begin{equation*}
    \begin{aligned}
        \int_0^{L\varepsilon} e^{\mu_{\alpha,\theta}|v_\varepsilon|^{\frac{p}{p-1}}}\mathrm{d}\lambda_\theta \geq \frac{\omega_\theta}{\theta+1} e^{\mu_{\alpha,\theta}A_0+\Psi(p)+\gamma+O(L^{-\frac{\theta+1}{p-1}})}\int_0^L\frac{1}{\Big(1+c_0s^\frac{\theta+1}{p-1}\Big)^p} \mathrm{d}\lambda_\theta.
    \end{aligned}
\end{equation*}
Arguing as in \eqref{integral-cabulosa}, we can write
\begin{equation*}
    \int_0^L\frac{1}{\Big(1+c_0s^\frac{\theta+1}{p-1}\Big)^p} \mathrm{d}\lambda_\theta=1-\int_L^{\infty}\frac{1}{\Big(1+c_0s^\frac{\theta+1}{p-1}\Big)^p} \mathrm{d}\lambda_\theta=1+O\Big(L^{-\frac{\theta+1}{p-1}}\Big).
\end{equation*}
Therefore
\begin{equation}\label{integralfora}
    \begin{aligned}
        \int_0^{L\varepsilon} e^{\mu_{\alpha,\theta}|v_\varepsilon|^{\frac{p}{p-1}}}\mathrm{d}\lambda_\theta &\geq \frac{\omega_\theta}{\theta+1} e^{\mu_{\alpha,\theta}A_0+\Psi(p)+\gamma+O(L^{-\frac{\theta+1}{p-1}})}\Big(1+O(L^{-\frac{\theta+1}{p-1}})\Big)\\
        &=\frac{\omega_\theta}{\theta+1} e^{\mu_{\alpha,\theta}A_0+\Psi(p)+\gamma}+O(L^{-\frac{\theta+1}{p-1}}).
    \end{aligned}
\end{equation}
Combining \eqref{integraldentro} and \eqref{integralfora}, we deduce
\begin{equation*}
\begin{aligned}
    \int_0^{R} e^{\mu_{\alpha,\theta}|v_\varepsilon|^{\frac{p}{p-1}}}\mathrm{d}\lambda_\theta &\geq|B_R|_\theta+d_p\mu_{\alpha,\theta}^{p-1}\frac{1}{c^{\frac{p}{p-1}}}\|g\|^{p}_{L^{p}_\theta}+O(L^{-\frac{\theta+1}{p-1}})\\
    &+\frac{\omega_\theta}{\theta+1} e^{\mu_{\alpha,\theta}A_0+\Psi(p)+\gamma}+O(L^{-\frac{\theta+1}{p-1}})\\
    &=|B_R|_\theta+e^{\mu_{\alpha,\theta}A_0+\Psi(p)+\gamma}|B_1|_{\theta} \\
    &+\frac{1}{c^{\frac{p}{p-1}}}\Big[d_p\mu_{\alpha,\theta}^{p-1}\|g\|^{p}_{L^{p}_\theta}+O\Big(c^{\frac{p}{p-1}}L^{-\frac{\theta+1}{p-1}}\Big)\Big].
\end{aligned}
\end{equation*}
Since $\theta\ge p$, $L = -\ln \varepsilon$ and $c^{\frac{p}{p-1}} = O(-\ln\varepsilon)$, we obtain $O\Big(c^{\frac{p}{p-1}}L^{-\frac{\theta+1}{p-1}}\Big)\to 0$ as $\varepsilon\to 0$. Thus,  
\begin{equation*}
\begin{aligned}
    \int_0^{R} e^{\mu_{\alpha,\theta}|v_\varepsilon|^{\frac{p}{p-1}}}\mathrm{d}\lambda_\theta>|B_R|_\theta+ e^{\mu_{\alpha,\theta}A_0+\Psi(p)+\gamma}|B_1|_\theta,
\end{aligned}
\end{equation*}
$\varepsilon > 0$ sufficiently small.
\end{proof}

\end{document}